\newtheorem{theorem}{Theorem}[section] %the first paranthese contains the command line, the next one represents the Display of that code, the third one is the counter (Which is used for numbering)
\newtheorem{definition}[theorem]{Definition} %in this command we let the counter stands in front of the display text to encode that this command uses the same counter 
\newtheorem{remark}[theorem]{Remark}
\newtheorem{proposition}[theorem]{Proposition}
\newtheorem{corollary}[theorem]{Corollary}
\newtheorem{lemma}[theorem]{Lemma}
\newtheorem{notation}[theorem]{Notation} %* is added to stipulate that this command should not be numbered
\newcommand{\footremember}[2]{%
	\footnote{#2}
	\newcounter{#1}
	\setcounter{#1}{\value{footnote}}%
}
\def\blfootnote{\gdef\@thefnmark{}\@footnotetext}
\begin{document}
\title{Scaling limit of the collision measures of multiple random walks}
\author{Dinh-Toan Nguyen}

\author{
	Dinh-Toan Nguyen\footremember{alley}{ LAMA, Univ Gustave Eiffel, Univ Paris Est Creteil, CNRS, F-77454 Marne-la-Vallée, France.} \footremember{trailer}{Département de Mathématiques, Université du Québec à Montréal (UQAM), Montréal, QC H2X 3Y7, Canada.}
	%\and Jesus Quintana\footrecall{alley} \footnote{Mexico?}
	%\and Uli Kunkel\footrecall{trailer} \footnote{Germany?}
}
\date{}
\blfootnote{\textit{MSC2020 subject classifications:} 60F05, 60G57, 82C05.} 
\blfootnote{\textit{Key words and phrases:} Random Walks, Collisions, Scaling Limits, Wiener Chaos, Partition Functions, Random Measures, $U$-statistics, Random Environment}
\maketitle
\begin{abstract}\noindent
For an integer $k\ge 2$, let $S^{(1)}, S^{(2)}, \dots,  S^{(k)}$ be $k$ independent simple symmetric random walks on $\mathbb{Z}$. A pair $(n,z)$ is called a collision event if there are at least two distinct random walks, namely, $S^{(i)},S^{(j)}$ satisfying $S^{(i)}_n= S^{(j)}_n=z$. We show that under the same scaling as in Donsker's theorem, the sequence of random measures representing these collision events converges to a non-trivial random measure on $[0,1]\times \mathbb{R}$. Moreover, the limit random measure can be characterized using Wiener chaos. The proof is inspired by methods from statistical mechanics, especially, by a partition function that has been developed for the study of directed polymers in random environments.\\

%Abstrait francais
%Étant donné $k \ge 2$ marches aléatoires simples symétriques indépendantes $S^{(1)}, S^{(2)}, \dots,  S^{(k)}$ sur $\mathbb{Z}$, on appelle $(n,z)$ un évènement de collision s'il existe deux marches aléatoires distinctes $S^{(i)},S^{(j)} ( 1\le i <  j \le k)$ telles que $S^{(i)}_n= S^{(j)}_n=z$. Dans cet article, on montre que sous le même changement d'échelle que dans le théorème de Donsker, les mesures aléatoires représentant ces évènements de collision convergent vers une mesure aléatoire non-triviale sur $[0,1] \times \mathbb{R}$. De plus, cette mesure aléatoire limite peut être caractérisée en utilisant les chaos de Wiener. La preuve s'inspire de méthodes issues de la mécanique statistique, en particulier d'une fonction de partition utilisée dans l'étude  des polymères dirigés en environnements aléatoires.
\end{abstract}
\pagenumbering{roman}
\pagenumbering{arabic}

\section{Introduction}
\label{section: Introduction}
For an integer $k\ge 2$, let $S^{(1)}, S^{(2)}, \dots,  S^{(k)}$ be $k$ independent simple symmetric random walks (SSRWs) on $\mathbb{Z}$, defined on a probability space $(\Omega, \mathcal{A}, \mathbf{P})$ (see \cite[p.3]{Pal2005}). A pair $(n,z) \in \mathbb{N}\times \mathbb{Z}$ is called \textit{a collision event} if there are at least two random walks that collide (occupy the same position at the same time) at the time $n$ and the location $z$ (see Figure \ref{fig:collision1}), and $n$ is then called a \textit{collision time}.\\
First mentioned in Pólya's note \cite{Polya1984}, the collision of random walks has since then been a classic topic in probability theory. Recently, this topic has gained more attention from researchers working on the random walks on graphs \cite{Barlow2012,Hutchcroft2015} and random environments \cite{Halberstam2022,Chen2016,Avena2018}.\\
When consider only two random walks, collision problems are strongly related to Brownian local time \cite{Knight1963,Revesz1981,Szabados2005}. The convergence of collision times can be achieved by coupling a new SSRW, the difference between two given random walks, with a Brownian motion using Skorokhod's embedding \cite[p.52]{Pal2005}\cite[Theorem 8.6.1]{Durrett2010}. However, these methods cannot be easily adapted to give convergence results for the collisions of more than two SSRWs because the couplings rely heavily the choices of stopping times which are proper for each SSRW.\\
To the best of our knowledge, scaling limit results for collisions of $k>2$ random walks are still limited.\\
In this paper, we investigate a relatively uncommon aspect of random walk collisions, concerning their duality with the partition function of a directed polymer model in statistical mechanics \cite{Carmona2002}. By following the ideas developped in \cite{Carmona2002,TomAlberts2014}, we obtain new results on when and where the collisions of these random walks occur after long observation, or more precisely, on the scaling limit of the empirical measures of the collision events.
\begin{figure*}[h!]
	\caption{An example of collision events when $k=3, N=100$. The collision events are represented by blue dots.}
	\centering
	\includegraphics[width=0.5\textwidth]{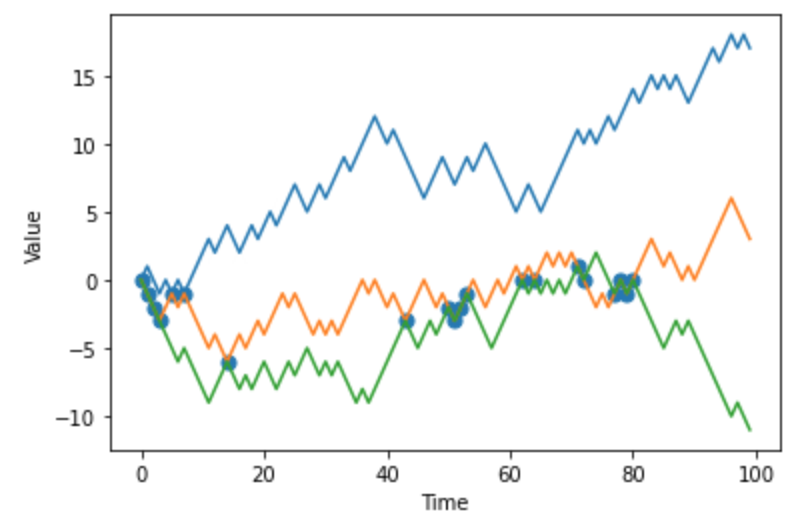}
	\label{fig:collision1}	
\end{figure*}
\\Our study objects are as follows:
\begin{definition} For each $N \in \mathbb{N}$, we define the collision measures of $k$ random walks $S^{(1)},S^{(2)},\dots,S^{(k)}$ until time $N$ to be:
	\label{definition: empirical measures}
	\begin{align*}
		&\Pi_N := \sum_{n=1}^N \sum_{z \in \mathbb{Z}} \sum_{ \substack{1 \le i < j \le k \space:\\ S_n^{(i)}= S_n^{(j)}=z}} \delta_{ \left(\frac{n}{N}, \frac{z}{\sqrt{N}}\right)}, \text{ and}
		\\
		&\Pi'_N :=\sum_{n=1}^N \sum_{z \in \mathbb{Z}}   \delta_{ \left(\frac{n}{N}, \frac{z}{\sqrt{N}}\right)} \mathbb{1}_{\{(n,z)\text{ is an collision event} \}},
	\end{align*}
where $\delta$ is the Dirac measure.
\end{definition}
For each $N \in \mathbb{N}$, the main difference between $\Pi'_N$ and $\Pi_N$  is that $\Pi_N$ takes into account the multiplicity of collision events. For example, if the number of considered random walks is $3$ and it happens that $S^{(1)}_n=S^{(2)}_n=S^{(3)}_n=z$ for some $(n,z)$, then the Dirac measure $\delta_{(\frac{n}{N},\frac{z}{\sqrt{N}} )}$ will appear $3$ times in the summation in $\Pi_N$ while the number of its appearance in $\Pi'_N$ is still one. \\
Concerning the scaling choice, one can observe that this is the same scaling as in Donker's theorem, which also suggests that the distribution of $\hat{\Pi}_N$ is closely related to Brownian local time. Indeed, when $k=2$, the total measure of $\Pi_N$ is equal to 
\begin{equation} \# \{ n \in [\![1,N]\!]: S^{(1)}_n-S^{(2)}_n=0 \} \stackrel{(d) }{=} \# \{ n \in [\![1,N]\!]: S^{(1)}_{2n}=0 \} = L^0_{S^{(1)}}\left( 2N \right),
\label{equation: local time k=2}
\end{equation}
where $\stackrel{(d) }{=}$ is the equality in law, and $L^0_S(t)$ is the local time at position $0$ during the period $[0,t]$ of some walk $S$. So by the convergence of local time of simple random walks, the equation \eqref{equation: local time k=2} implies $\frac{1}{\sqrt{N}} \left\| \Pi_N \right\| =O_{\mathbf{P}}(1)$, where $\|\Pi_N\|$ denotes the total measure of $\Pi_N$.\\
In this work, we not only bound the sequence $( \frac{1}{\sqrt{N}} \Pi_N   ; N \in \mathbb{N})$, but also prove that this sequence of random measures converges to a non-trivial random measure $\mathcal{N}$ on $[0,1]\times \mathbb{R}$. Before giving our main result, let us recall the convergence of random measures.
\begin{definition}(Convergence of random measures)
	\label{definition: Convergence of random measures}
	Suppose $\xi,\xi_1,\xi_2,\dots$ are random finite measures on $([0,1]\times \mathbb{R},\mathcal{B}([0,1]\times \mathbb{R}))$, we say that $\xi_n \xrightarrow[n \rightarrow \infty]{wd} \xi $ if the sequence of real random variables $(\xi_n(f),n \in \mathbb{N})$ converges in distribution to $\xi(f)$ when $n$ goes to infinity for all bounded continuous function $f \in \mathcal{C}_b([0,1]\times \mathbb{R})$. Here,  $\mu(f)$ denotes the integral  $\int f d\mu$ for any (random) measure $\mu$ and bounded measurable function $f$.
\end{definition}
Here are our main results:
\begin{theorem}(Convergence of collision measures and characterization of the limit random measure)\label{theorem: main theorem}
	\begin{itemize}
		\item 
		There is a random finite positive measure $\mathcal{N}$ on the measurable space $([0,1]\times \mathbb{R}, \mathcal{B}([0,1]\times \mathbb{R}) )$ such that:
		\begin{equation*}
			\frac{1}{\sqrt{N}}\Pi_N \xrightarrow[N \rightarrow+\infty]{wd} \mathcal{N} \qquad \text{ and } \qquad 		\frac{1}{\sqrt{N}}\Pi'_N \xrightarrow[N \rightarrow+\infty]{wd} \mathcal{N}.
		\end{equation*}
		\item Furthermore, for all nonnegative bounded continuous function $f \in \mathcal{C}_{b,+}([0,1]\times \mathbb{R})$, the exponential moment of $\mathcal{N}$ with respect to $f$ is equal to $k$-th moment of a positive random variable $\mathcal{Z}_{\sqrt{2f}}$:
		\begin{equation*}
			\mathbf{E}\left[e^{  \mathcal{N}(f)} \right] =\mathbf{E} \left[\left( \mathcal{Z}_{\sqrt{2f}} \right)^k \right].
		\end{equation*}
		where for each $a \in \mathcal{C}_{b,+}([0,1]\times \mathbb{R})$, the random variable $\mathcal{Z}_a$  is identified as the sum of multiple stochastic integrals given by:
		\begin{equation}\label{equation: Z_a}
			\mathcal{Z}_{a} := 1+\sum_{n=1}^{\infty} \int_{\Delta_n} \int_{\mathbb{R}^n} \prod_{i=1}^n \bigg[a(\mathbf{t}_i,\mathbf{x}_i)\varrho( \mathbf{t}_i-\mathbf{t}_{i-1}, \mathbf{x}_i-\mathbf{x}_{i-1})W(d\mathbf{t}_i d\mathbf{x}_i)\bigg], 
		\end{equation}
				where, $W$ is the white noise based on the Lebesque measure on $[0,1]\times \mathbb{R}$, $\mathbf{x}_0=0$,  $\varrho$ is the standard Gaussian heat kernel $$\varrho(t,x)= \frac{ e^{-x^2/2t}}{ \sqrt{2\pi t}},$$ and  $\Delta_n$ is the $n$\textbf{-dimensional simplex}
		\begin{equation}
			\label{equation: dimensional simplex} 
			\Delta_n := \{ \mathbf{t} \in [0,1]^n : 0=\mathbf{t}_0 <\mathbf{t}_1<\mathbf{t}_2<\cdots < \mathbf{t}_n \le 1 \}.
		\end{equation}
\end{itemize}	
\end{theorem}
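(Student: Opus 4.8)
I would first reduce everything to the positive exponential moments $\mathbf{E}\big[e^{\frac1{\sqrt N}\Pi_N(f)}\big]$ for $f\in\mathcal{C}_{b,+}([0,1]\times\mathbb{R})$, compute these \emph{exactly} for finite $N$ by recognizing them as $k$-th moments of a directed polymer partition function, and then pass to the limit via the intermediate–disorder scaling limit of that partition function. As a preliminary, note that with $V_n(z):=\#\{i:S^{(i)}_n=z\}$ one has $\Pi_N(f)=\sum_{n,z}\binom{V_n(z)}{2}f(\tfrac nN,\tfrac z{\sqrt N})$ and $\Pi'_N(f)=\sum_{n,z}\mathbb{1}_{\{V_n(z)\ge 2\}}f(\tfrac nN,\tfrac z{\sqrt N})$, so $0\le \Pi_N(f)-\Pi'_N(f)\le \|f\|_\infty\,\#\{(n,z):V_n(z)\ge 3\}$, and since $\mathbf{E}\big[\#\{(n,z):V_n(z)\ge 3\}\big]\le\binom k3\sum_{n\le N}\mathbf{P}(S^{(1)}_n=S^{(2)}_n=S^{(3)}_n)=O(\log N)=o(\sqrt N)$ (three independent SSRWs meet with probability of order $1/n$), we get $\frac1{\sqrt N}(\Pi_N-\Pi'_N)(f)\to 0$ in $L^1$. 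Hence it suffices to prove the statement for $\frac1{\sqrt N}\Pi_N$.

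\textbf{The duality identity.} Introduce i.i.d.\ standard Gaussians $\{g(n,z)\}_{n,z}$ independent of the walks, and set $\beta_N(n,z):=N^{-1/4}\sqrt{f(n/N,z/\sqrt N)}$. Integrating out $g$ at a fixed time $n$ and using $\sum_z V_n(z)=k$ gives the Hubbard–Stratonovich identity $\mathbf{E}_g\big[\prod_{i=1}^k e^{\beta_N(n,S^{(i)}_n)g(n,S^{(i)}_n)-\frac12\beta_N(n,S^{(i)}_n)^2}\big]=\exp\big(\sum_z \beta_N(n,z)^2\binom{V_n(z)}{2}\big)$. Taking the product over $n\le N$, then $\mathbf{E}$ over the walks, using Tonelli (everything positive) and the independence of $S^{(1)},\dots,S^{(k)}$, yields the exact finite-$N$ identity
\[
\mathbf{E}\Big[e^{\frac1{\sqrt N}\Pi_N(f)}\Big]=\mathbf{E}_g\big[(Z_N^{g})^{k}\big],\qquad Z_N^{g}:=\mathbf{E}_S\Big[\prod_{n=1}^N e^{\beta_N(n,S_n)g(n,S_n)-\frac12\beta_N(n,S_n)^2}\Big],
\]
i.e.\ $Z_N^g$ is the point-to-line partition function of a directed polymer in the Gaussian environment $g$ at the vanishing, space–time inhomogeneous inverse temperature $\beta_N$ of order $N^{-1/4}$ with macroscopic profile $\sqrt{f}$ — exactly the intermediate–disorder regime of Alberts–Khanin–Quastel.

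\textbf{Polymer scaling limit (the crux).} Expand $Z_N^g=1+\sum_{m\ge1}I_m^{N}$ into its discrete Wiener chaos, a degenerate $U$-statistic in the variables $g$ whose $m$-th term sums $\beta_N$-weighted products of $m$ SSRW transition probabilities along increasing times and matched positions. Using the local central limit theorem, in the form $\sum_z\mathbf{P}(S_n=z)\mathbf{P}(S'_n=z)\sim 1/\sqrt{\pi n}=\sqrt2\int_{\mathbb{R}}\varrho(n,x)^2\,dx$ — the factor $\sqrt2$ coming from the parity of the lattice, and being exactly what turns $\sqrt{f}$ into $\sqrt{2f}$ — the rescaled $m$-th chaos converges to $\int_{\Delta_m}\!\int_{\mathbb{R}^m}\prod_{i=1}^m\big[\sqrt{2f(\mathbf t_i,\mathbf x_i)}\,\varrho(\mathbf t_i-\mathbf t_{i-1},\mathbf x_i-\mathbf x_{i-1})W(d\mathbf t_id\mathbf x_i)\big]$, the $m$-th chaos of $\mathcal{Z}_{\sqrt{2f}}$. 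Combined with an $L^p$-bound on the chaos series uniform in $N$ (Gaussian hypercontractivity applied chaos-by-chaos, using that the simplex integrals of $\prod_i(\mathbf t_i-\mathbf t_{i-1})^{-1/2}$ are summable after the $\Gamma(m/2+1)$ normalization, which also shows $\mathbf{E}[\mathcal{Z}_{\sqrt{2f}}^k]<\infty$; equivalently, a collision-local-time bound showing $\mathbf{E}_g[(Z_N^g)^k]$ stays bounded), this gives $Z_N^g\xrightarrow{d}\mathcal{Z}_{\sqrt{2f}}$ together with convergence of all positive integer moments. In particular $\mathbf{E}_g[(Z_N^g)^k]\to\mathbf{E}[\mathcal{Z}_{\sqrt{2f}}^k]$, and with the identity of Step 2, $\mathbf{E}\big[e^{\frac1{\sqrt N}\Pi_N(f)}\big]\to\mathbf{E}[\mathcal{Z}_{\sqrt{2f}}^k]<\infty$ for every $f\in\mathcal{C}_{b,+}$.

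\textbf{Conclusion.} Fix $f\in\mathcal{C}_{b,+}$. Applying the last limit to $2f$ bounds $\sup_N\mathbf{E}\big[(e^{\frac1{\sqrt N}\Pi_N(f)})^2\big]$, so $\{e^{\frac1{\sqrt N}\Pi_N(f)}\}_N$ is uniformly integrable and $\{\frac1{\sqrt N}\Pi_N(f)\}_N$ is tight (Markov, since $\mathbf{E}[\frac1{\sqrt N}\Pi_N(f)]=\binom k2\frac1{\sqrt N}\sum_{n\le N}\sum_z f\,\mathbf{P}(S_n=z)^2=O(1)$). Any subsequential distributional limit $Y\ge0$ then satisfies $\mathbf{E}[e^{\theta Y}]=\mathbf{E}[\mathcal{Z}_{\sqrt{2\theta f}}^k]$ for all $\theta\ge0$ (apply Step 3 to $\theta f$), an m.g.f.\ finite in a neighbourhood of $0$, which determines the law of $Y$; hence $\frac1{\sqrt N}\Pi_N(f)$ converges in distribution to a limit $\mathcal{N}(f)$ with $\mathbf{E}[e^{\mathcal{N}(f)}]=\mathbf{E}[\mathcal{Z}_{\sqrt{2f}}^k]$. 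Running the same argument with $\sum_i\theta_i f_i$ gives joint convergence of $(\frac1{\sqrt N}\Pi_N(f_i))_i$ for $f_i\in\mathcal{C}_{b,+}$; the limiting Laplace functional $f\mapsto\mathbf{E}[e^{-\mathcal{N}(f)}]$ has the structure required by the existence theorem for random measures (Kallenberg), producing a random measure $\mathcal{N}$ on $[0,1]\times\mathbb{R}$, a.s.\ finite by diffusivity of the walks (no mass escapes in the space direction) together with the uniform bound on $\frac1{\sqrt N}\|\Pi_N\|$; and $\frac1{\sqrt N}\Pi_N\xrightarrow{wd}\mathcal{N}$, with $\frac1{\sqrt N}\Pi'_N\xrightarrow{wd}\mathcal{N}$ by Step 1. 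The main obstacle is Step 3: the uniform-in-$N$ control of the $k$-th moment of the inhomogeneous discrete polymer partition function and the chaos-by-chaos convergence, including the precise emergence of the factor $\sqrt2$ from the on-diagonal local CLT asymptotics (so that the limit is $\mathcal{Z}_{\sqrt{2f}}$ and not $\mathcal{Z}_{\sqrt f}$).
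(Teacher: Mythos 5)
Your proposal is correct in outline and follows the same architecture as the paper: identify $\mathbf{E}\big[e^{\frac{1}{\sqrt N}\Pi_N(f)}\big]$ with the $k$-th moment of a directed polymer partition function at inverse temperature of order $N^{-1/4}$ with profile $\sqrt f$, take the intermediate-disorder (Alberts--Khanin--Quastel) chaos limit of that partition function to get $\mathcal{Z}_{\sqrt{2f}}$, control uniform integrability via exponential moments of the pair collision local time, dispose of $\Pi_N-\Pi'_N$ by the $O(1/n)$ triple-collision estimate, and finish with tightness plus uniqueness of the limit law from the exponential moments. The one genuinely different ingredient is your choice of disorder: you use i.i.d.\ Gaussians and the Hubbard--Stratonovich computation $\mathbf{E}_g\prod_i e^{\beta g-\beta^2/2}=\exp\big(\sum_z\beta_N(n,z)^2\binom{V_n(z)}{2}\big)$, which makes the duality an \emph{exact} finite-$N$ identity $\mathbf{E}\big[e^{\frac{1}{\sqrt N}\Pi_N(f)}\big]=\mathbf{E}_g\big[(Z_N^g)^k\big]$. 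The paper instead uses Rademacher disorder and the product $\prod_n(1+N^{-1/4}A\omega)$, for which the relation to $e^{\frac{1}{\sqrt N}\Pi_N(f)}$ is only asymptotic; it then needs the Appendix~A machinery (the product-versus-sum Theorems \ref{theorem:relation between product and sum} and \ref{theorem:second relation between products and sums}, plus the bound on $T_N-\frac{1}{\sqrt N}\Pi_N(f)$) to close the gap. Your exact identity eliminates all of that, at the modest cost that the Gaussian weight $e^{\beta g-\beta^2/2}$ contributes all Hermite orders at each site, so the chaos expansion of $Z_N^g$ is slightly less clean than the paper's $\mathfrak{Z}_N=\sum_n 2^{n/2}\beta^n\mathcal{S}^N_n(p^N_n)$; the standard AKQ argument absorbs this, but you should say so. Two smaller points to fix. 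First, your displayed local CLT constant is off: $\sum_z\mathbf{P}(S_n=z)^2\sim 1/\sqrt{\pi n}=2\int_{\mathbb{R}}\varrho(n,x)^2\,dx$, not $\sqrt2\int\varrho^2$; the factor $2$ at the level of second moments is what becomes $\sqrt2$ at the level of the chaos amplitudes, so your conclusion $\mathcal{Z}_{\sqrt{2f}}$ is right but the intermediate equality is not. Second, your passage from convergence of the finite-dimensional vectors $(\frac{1}{\sqrt N}\Pi_N(f_i))_i$ to convergence of the random measures leans on an unexamined appeal to a Kallenberg existence theorem; the paper does this more carefully by proving weak tightness of $\big(\frac{1}{\sqrt N}\Pi_N\big)_N$ in the space of finite measures (bounding the total mass by the pair local time and the support by $\sup_n|S_n|/\sqrt N$) and then identifying all subsequential limits via Cram\'er--Wold, and you would need the same tightness argument to make your last step rigorous.
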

We refer to \cite[Chapter 1]{Walsh1986} for an introduction on white noise (see also Section \ref{subsection: Wiener chaos} of this article).\\
As mentioned earlier, we will prove this theorem by investigating the connection between the collision measures of random walks with a model in statistical mechanics; hence, we will introduce many auxiliary notions in our paper, such as \textit{random environment} $\omega$,  \textit{partition function} $\mathfrak{Z}_N$ and $U$\textit{-statistics}  $\mathcal{S}^N_n( \cdot)$.\\
The general idea is that by associating each point $(n,z)$ on the grid $\mathbb{N} \times \mathbb{Z}$ with a random variable, we can change the underlying framework from studying a deterministic grid to studying a collection of random variables indexed by $\mathbb{N}\times \mathbb{Z}$. For a such collection, the range of possible tools from statistical mechanics is large. Indeed, the partition function we will use is developped to study a directed polymer model \cite{TomAlberts2014, Berger2021}.\\
The organization of this paper is as follows: Section \ref{section: Proof ideas} introduces some basic notions that we will use in the sequel to explain our main ideas, especially, the relation between the concept of partition functions and the collision measures $\Pi_N$. Section \ref{section: U-statistics} gives a brief review on $U$-statistics and Wiener chaos. At the end of this section, we prove some new theorems on the convergence of $U$-statistics, on which our asymptotic result on partition functions (Theorem \ref{theorem: convergence of parition functions}) is based. Section \ref{section:Limit theorems for partition functions} presents a short study on the random variable $\mathcal{Z}_a$ defined in \eqref{equation: Z_a}, and our proof of Theorem \ref{theorem: convergence of parition functions}. Finally, Section \ref{section: Limit theorems for collision events} combines all proved results to show the \textit{weak tightness} of $( \frac{1}{\sqrt{N}} \Pi_N, N \in \mathbb{N})$, and prove Theorem \ref{theorem: main theorem}.\\
Some auxiliary results are presented in the Appendix at the end of this article.
\section{Partition functions and main ideas of proof}
 \label{section: Proof ideas}
\subsection{Partition functions}
We introduce a collection $\omega:= \left( \omega(i,z) : i \in \mathbb{N}, z \in \mathbb{Z} \right)$ of independent Rademacher variables indexed by $\mathbb{N}\times\mathbb{Z}$, i.e., for all $(n,z) \in \mathbb{N}\times\mathbb{Z}$, $$\mathbf{P}( \omega(n,z)=-1) = \mathbf{P}( \omega(n,z)=1 ) =\frac{1}{2}.$$
These random variables are created by extending our existing probability space $(\Omega, \mathcal{A}, \mathbf{P})$ so that $\omega, S^{(1)}, S^{(2)}, \dots,S^{(k)}$ are independent.\\
In the sequel, for a real number $\beta$ and a real function $A$ on $\mathbb{N}\times \mathbb{Z}$, $\beta \omega$ and $A\omega$ are defined as:
\begin{align*}
	A\omega&:= \left( A(i,z)\omega(i,z) : i \in \mathbb{N}, z \in \mathbb{Z} \right).
	\\
	\beta \omega&:=  \left( \beta \omega(i,z) : i \in \mathbb{N}, z \in \mathbb{Z} \right).
\end{align*}
As briefly explained in Section \ref{section: Introduction}, the role $\omega$ is to add new degrees of freedom to the existing model, by which we have more flexibility to create more objects. The partition function $\mathfrak{Z}$ is one of such objects:
\begin{definition}
	\label{definition: partition function Z}
For any positive integer $N$ and any real function $A$ on $\mathbb{N}\times \mathbb{Z}$,  the \textbf{partition function} $\mathfrak{Z}_N( A )$ is defined as the conditional expectation:
\begin{align*}
	\mathfrak{Z}_N(A) := \mathbf{E} \left[ \text{ }\prod_{n=1}^N (1+A(n,S^{(1)}_n)\omega(n,S^{(1)}_n))  \text{ }\bigg| \text{   } \omega\text{ } \space \right].
\end{align*}
\end{definition}
Note that $\mathfrak{Z}_N(A)$ is a random variable depending only on the value of $\omega$.\\
\subsection{Main ideas}
The starting point of our paper and the proof of our main results is a heuristic relation between the partition functions $\mathfrak{Z}_N$  and the random measure $\Pi_N$:\\
Given a nonnegative bounded function $A$ on $\mathbb{N}\times \mathbb{Z}$, since $S^{(1)},\cdots,S^{(k)}$  are i.i.d.,  we have:
	\begin{align*}
	&\mathbf{E}\left[ \mathfrak{Z}_N\left( \frac{1}{N^{1/4}} A \right)^k \right] = \mathbf{E}\left[ \mathbf{E} \left[ \prod_{n=1}^N (1+ \frac{1}{N^{1/4}} A (n,S^{(1)}_n)\omega(n,S^{(1)}_n)) \bigg| \omega \right]^k \right] 	
	\\
	=& \mathbf{E} \Bigg[ \mathbf{E} \left[  \prod_{i=1}^k \prod_{n=1}^N (1+ \frac{1}{N^{1/4}} A (n,S^{(i)}_n)\omega(n,S^{(i)}_n)) \bigg| \omega \right] \Bigg]
	\\
	=& \mathbf{E}  \left[  \prod_{n=1}^N \prod_{i=1}^k (1+ \frac{1}{N^{1/4}} A (n,S^{(i)}_n)\omega(n,S^{(i)}_n)) \right] 
	\\
	= & \mathbf{E} \left[ \mathbf{E}  \left[  \prod_{n=1}^N \prod_{i=1}^k (1+ \frac{1}{N^{1/4}} A (n,S^{(i)}_n)\omega(n,S^{(i)}_n)) \bigg| S^{(1)},\cdots,S^{(k)}  \right] \right]
	\\
	= & \mathbf{E} \left[ \prod_{n=1}^N \mathbf{E}  \left[   \prod_{i=1}^k (1+ \frac{1}{N^{1/4}} A (n,S^{(i)}_n)\omega(n,S^{(i)}_n)) \bigg| S^{(1)},\cdots,S^{(k)}  \right] \right]
	\\
	= &  \mathbf{E}\bigg[ \prod_{n=1}^N  \bigg[ 1+ \frac{1}{N^{1/2}} \bigg(\sum_{ \substack{1 \le i < j \le k \space:\\ S_n^{(i)}= S_n^{(j)}=z}} A^2(n,z) \bigg)+ \frac{1}{N^{3/4}} (\dots) +...\bigg]\bigg]
\end{align*}
Then since $1+x \approx e^x$, heuristically, we deduce:
\begin{align*}
	\mathbf{E}&\left[ \mathfrak{Z}_N\left( \frac{1}{N^{1/4}} A \right)^k \right] \approx  \mathbf{E}\bigg[ \prod_{n=1 }^N \bigg[ \exp\big( \frac{1}{N^{1/2}} \sum_{ \substack{1 \le i < j \le k \space:\\ S_n^{(i)}= S_n^{(j)}=z}} A^2(n,z) \big)\bigg] \bigg]
	\\
	= & \mathbf{E} \bigg[  \exp \bigg( \frac{1}{N^{1/2}} \sum_{n=1}^N \sum_{ \substack{1 \le i < j \le k \space:\\ S_n^{(i)}= S_n^{(j)}=z}} A^2(n,z) \bigg)\bigg]
	= \mathbf{E} \bigg[  \exp\bigg(  \frac{1}{N^{1/2}} \Pi_N( f_N) \bigg)\bigg],
\end{align*}
where $f_N$ is a measurable function such that $f_N\left( \frac{n}{N}, \frac{z}{\sqrt{N}} \right)= A(n,z)$ for all $n \in \mathbb{N}, z \in \mathbb{Z}$.\\
In short, by abuse of notation, the above observation suggests that:
\begin{equation}
		\label{equation: heuristic calculation}
		\mathbf{E}\left[e^{ N^{-1/2}\Pi_N}\right]\approx \mathbf{E}\left[ (\mathfrak{Z}_N)^k \right].
\end{equation} 
In other words, if we have a good understanding of $\mathfrak{Z}$, we will have good information on  $\Pi_N, \Pi'_N$. \\
Then to study the partition function $\mathfrak{Z}_N$, we base our study on the paper \cite{TomAlberts2014}, in which Alberts et al. studied the scaling limit of $\mathfrak{Z}$ when the function $A$ is constant. In our study, we generalize their results for a sequence of functions $(A_N, N \in \mathbb{N})$ satisfying certain conditions. An expansion of Wiener chaoses emerges naturally in our limit objects because, as we will see, each term in the algebraic expansion of $\mathfrak{Z}_N$ (cf. Proposition \ref{proposition: relation between partition functions and U-statistics}) converges to a Wiener chaos. To this aim, we will have to introduce some $U$-statistics and study their asymptotic behavior in Section \ref{section: U-statistics}.
\subsection{Main results on partition functions}
We terminate this section by presenting our results on the asymptotic behavior of $\mathfrak{Z}_N$. The proofs will be presented later in Section \ref{section:Limit theorems for partition functions}.
\begin{notation} For $(t,x) \in [0,1]\times \mathbb{R}$, $[t,x]_N$ denotes the unique pair of integer $(i,z)$ such that :
	\begin{itemize}
		\item $(t,x) \in \left( \frac{i-1}{N},\frac{i}{N}\right]\times \left( \frac{z-1}{\sqrt{N}},\frac{z+1}{\sqrt{N}}\right]$,
		\item $i$ and $z$ have same parity.
	\end{itemize}
	\label{notation: (t,x)}
\end{notation} 

\begin{theorem}
	\label{theorem: convergence of parition functions}
	Let $(A_n, n \in \mathbb{N})$ be a sequence of real functions whose domain is $\mathbb{N} \times \mathbb{Z}$ such that:
	\begin{itemize}
		\item[i.]  $\sup_{N} \|  A_N\|_{\infty}  <+\infty$,
		\item[ii.]  there is a measurable function $a \in L^{\infty}( [0,1]\times \mathbb{R})$ such that:
		$$\lim_{N \rightarrow +\infty} A_N( [t,x]_N)= a(t,x) \quad \text{a.e.}$$
		
	\end{itemize}
	Then as $N$ converges to infinity, we have:
	$$\mathfrak{Z}_N(N^{-1/4} A_N ) \xrightarrow{\text{(d)}} \mathcal{Z}_{\sqrt{2}a}$$

\end{theorem}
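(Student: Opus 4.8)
The plan is to expand the product defining $\mathfrak{Z}_N(N^{-1/4}A_N)$ algebraically and identify it as a finite sum of $U$-statistics, then invoke the convergence results for $U$-statistics announced in Section~\ref{section: U-statistics} term by term. Concretely, conditioning on $\omega$ and expanding
\[
\prod_{n=1}^N\Bigl(1+N^{-1/4}A_N(n,S^{(1)}_n)\omega(n,S^{(1)}_n)\Bigr)
=\sum_{m=0}^N N^{-m/4}\sum_{1\le n_1<\cdots<n_m\le N}\ \prod_{j=1}^m A_N(n_j,S^{(1)}_{n_j})\omega(n_j,S^{(1)}_{n_j}),
\]
and taking $\mathbf{E}[\,\cdot\mid\omega]$, each inner sum becomes (after summing over the positions $z_1,\dots,z_m$ of the walk at times $n_1<\cdots<n_m$) a homogeneous multilinear form in the Rademacher family $\omega$ weighted by random-walk transition probabilities $q_{n_j-n_{j-1}}(z_j-z_{j-1})$ and by the values $A_N(n_j,z_j)$. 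This is exactly the $U$-statistic $\mathcal{S}^N_m(\cdot)$ from the paper (up to the $N^{-m/4}$ normalization), so Proposition~\ref{proposition: relation between partition functions and U-statistics} should express $\mathfrak{Z}_N(N^{-1/4}A_N)=1+\sum_{m\ge1}N^{-m/4}\mathcal{S}^N_m(A_N^{\otimes m})$ with an explicit kernel.

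Next I would pass to the limit in each fixed-degree term. The rescaled discrete kernel, evaluated via the notation $[t,x]_N$, is
\[
N^{-m/4}\prod_{j=1}^m A_N([t_j,x_j]_N)\,q_{n_j-n_{j-1}}(z_j-z_{j-1}),
\]
and the point is that, after the $N^{-m/4}$ scaling, the local CLT $q_{n}(z)\sim 2\,\varrho(n/N,(z)/\sqrt N)/\sqrt N$ (with the parity factor $2$) together with hypothesis (ii) makes this kernel converge, as a function on $\Delta_m\times\mathbb{R}^m$, to $(\sqrt2)^m\prod_j a(t_j,x_j)\varrho(t_j-t_{j-1},x_j-x_{j-1})$ — the factor $2^{m/2}$ absorbing the $2^m$ from parity against the $m$-fold Riemann-sum spacing; hypothesis (i) gives the uniform $L^\infty$ (hence $L^2(\Delta_m\times\mathbb{R}^m)$) control needed to apply the $U$-statistic-to-Wiener-chaos convergence theorem of Section~\ref{section: U-statistics}. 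That theorem then yields, for each fixed $m$, convergence in distribution (indeed jointly over finitely many $m$, and with $L^2$ convergence of the chaos) of $N^{-m/4}\mathcal{S}^N_m(A_N^{\otimes m})$ to the $m$-fold Wiener integral appearing as the $m$-th summand of $\mathcal{Z}_{\sqrt2\,a}$ in \eqref{equation: Z_a}.

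Finally I would upgrade the term-by-term convergence to convergence of the full series. The mechanism is a uniform tail estimate: bound $\mathbf{E}\bigl[(N^{-m/4}\mathcal{S}^N_m(A_N^{\otimes m}))^2\bigr]$ by $\|a\|_\infty^{2m}$ times the $L^2(\Delta_m\times\mathbb{R}^m)$-norm squared of the iterated heat kernel, which is $\le C^m/\Gamma(m/2+1)$-type and summable, uniformly in $N$ (using (i) and a discrete heat-kernel $\ell^2$ bound, e.g.\ $\sum_z q_n(z)^2\le C/\sqrt n$). This shows the tails $\sum_{m>M}N^{-m/4}\mathcal{S}^N_m$ are uniformly small in $L^2$, so truncating at level $M$, letting $N\to\infty$, then $M\to\infty$, gives $\mathfrak{Z}_N(N^{-1/4}A_N)\xrightarrow{(d)}\mathcal{Z}_{\sqrt2\,a}$; one can also note all these random variables are in a fixed finite sum of Wiener chaoses in the limit, so $L^2$-closeness plus termwise convergence suffices.

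I expect the main obstacle to be the kernel convergence step with a \emph{moving} sequence $A_N$: one must show that $A_N([t,x]_N)\to a(t,x)$ a.e.\ (hypothesis (ii)) combined with the local CLT passes to the $m$-fold Riemann-sum limit for the $U$-statistic kernels, handling simultaneously (a) the a.e.\ convergence on the unbounded domain $\mathbb{R}^m$, (b) the parity constraint linking $n$ and $z$ and the resulting factor-$2$ bookkeeping that produces the $\sqrt2$, and (c) the near-diagonal region $n_j\approx n_{j-1}$ where the local CLT degenerates — this last is what forces working in $L^2$ of the simplex and using the $\Gamma$-function decay of iterated-heat-kernel norms rather than pointwise bounds. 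The uniform-in-$N$ $L^2$ tail bound is the other technical crux, but it reduces to a clean $\ell^2$ estimate on discrete transition probabilities.
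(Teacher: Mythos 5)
Your proposal is correct and follows essentially the same route as the paper: expand $\mathfrak{Z}_N(N^{-1/4}A_N)$ into the $U$-statistics $\mathcal{S}^N_n$ with discrete heat-kernel weights (Proposition \ref{proposition: relation between partition functions and U-statistics}), use the local CLT to replace the rescaled discrete kernels by $\varrho_n$ in $L^2$ (Lemma \ref{lemma: convergence and boundedness of p^N_n}), control the tails uniformly in $N$ via the $\Gamma$-decay of $\|\varrho_n\|_2^2$ (Proposition \ref{propostion: definition of varrho(a) }), and conclude by the truncation/double-limit scheme of Lemma \ref{lemma: a standard result on weak convergence}. The only cosmetic difference is ordering: the paper first proves convergence of the full series with the fixed kernels $2^{n/2}\varrho_n$ (Theorem \ref{theorem: discrete Chaos to Wiener chaos}) and then shows the kernel-replacement error vanishes in $L^2$, whereas you swap the kernels term by term before summing the tails — the estimates involved are the same.
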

This theorem is a generalization of Proposition 5.3 in \cite{TomAlberts2014} where the sequence $(A_N;N \in \mathbb{N})$ is replaced by a fixed constant $\beta \ge 0$.\\
We will also prove that under some conditions,  the partition functions are uniformly bounded in $L^k$:
\begin{theorem} 
	\label{theorem:uniform integrability of Z_k}
	For a sequence of real functions $(A_n, n \in \mathbb{N})$ on $\mathbb{N}\times\mathbb{Z}$ such that $ \sup_{N} \| A_N\|_{\infty} <+\infty$, we have:
	$$\limsup_{N} \mathbf{E}\left[ \left(\mathfrak{Z}_N\left( N^{-1/4} A_N  \right) \right)^k \right]  < +\infty .$$	
\end{theorem}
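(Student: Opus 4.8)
The plan is to reduce the statement to a uniform bound on an exponential moment of a single random-walk local time. \emph{Step 1 (an exact formula for the $k$-th moment).} Set $B:=N^{-1/4}A_N$. Conditionally on $\omega$ the walks $S^{(1)},\dots,S^{(k)}$ remain i.i.d., so raising $\mathfrak{Z}_N(B)$ to the $k$-th power, taking expectations, and integrating out $\omega$ first (all factors are bounded, so Fubini applies) I would obtain
\begin{equation*}
	\mathbf{E}\!\left[\mathfrak{Z}_N(B)^k\right]=\mathbf{E}\!\left[\ \prod_{n=1}^{N}\ \prod_{z\in\mathbb{Z}}\ g_{m_z(n)}\!\big(B(n,z)\big)\right],\qquad g_m(b):=\tfrac12\big((1+b)^m+(1-b)^m\big),
\end{equation*}
where the outer expectation is now over the walks, $m_z(n):=\#\{1\le i\le k:\ S^{(i)}_n=z\}$, and $g_m(b)=\sum_{\ell\ \mathrm{even},\,\ell\le m}\binom{m}{\ell}b^\ell$ comes from $\mathbf{E}[(1+b\,\omega(n,z))^{m}]$ with $\omega(n,z)$ Rademacher. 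The key feature is $g_0\equiv g_1\equiv 1$: a position $z$ at time $n$ contributes a factor different from $1$ only when at least two walks meet there.

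\emph{Step 2 (reduction to a local time).} Since $\sup_N\|A_N\|_\infty\le C<\infty$, $\|B\|_\infty\le CN^{-1/4}$, which is $\le 1$ for all large $N$ (the finitely many small $N$ do not affect the $\limsup$). For $m\le k$ and $|b|\le1$, using $b^\ell\le b^2$ for even $\ell\ge2$, one gets $g_m(b)\le 1+2^{k-1}b^2\le e^{2^{k-1}b^2}$. Combined with $g_0=g_1=1$ and the elementary inequality $\#\{z:m_z(n)\ge2\}\le\sum_z\binom{m_z(n)}{2}=\sum_{1\le i<j\le k}\mathbb{1}_{\{S^{(i)}_n=S^{(j)}_n\}}$, this yields
\begin{equation*}
	\prod_{n=1}^{N}\prod_{z\in\mathbb{Z}}g_{m_z(n)}\!\big(B(n,z)\big)\ \le\ \exp\!\Big(2^{k-1}\|B\|_\infty^2\!\!\sum_{1\le i<j\le k}\!\!L_{ij}(N)\Big),\qquad L_{ij}(N):=\#\{n\le N:\ S^{(i)}_n=S^{(j)}_n\}.
\end{equation*}
Using $\|B\|_\infty^2\le C^2N^{-1/2}$ and then the generalized Hölder inequality over the $\binom{k}{2}$ (correlated) factors, and recalling that every pair $(S^{(i)},S^{(j)})$ has the law of $(S^{(1)},S^{(2)})$ while $L_{12}(N)\overset{(d)}{=}L^0_S(2N)$ by \eqref{equation: local time k=2}, I would reduce the theorem to showing that for each fixed $\lambda>0$
\begin{equation*}
	\limsup_{N\to\infty}\ \mathbf{E}\!\left[\exp\!\big(\lambda N^{-1/2}L^0_S(2N)\big)\right]\ <\ +\infty ,\qquad L^0_S(2N)=\#\{1\le n\le N:\ S_{2n}=0\}.
\end{equation*}

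\emph{Step 3 (uniform Gaussian tail of the local time).} I would establish $\mathbf{P}\big(L^0_S(2N)\ge j\big)\le K\,e^{-c_0 j^2/N}$ for universal constants $K,c_0>0$. Writing $\{L^0_S(2N)\ge j\}=\{\tau_1+\cdots+\tau_j\le 2N\}$ with $\tau_1,\tau_2,\dots$ i.i.d.\ copies of the first return time $\tau$ to $0$ of a SSRW, a Chernoff bound gives $\mathbf{P}(\tau_1+\cdots+\tau_j\le 2N)\le e^{2tN}\,\mathbf{E}[e^{-t\tau}]^{\,j}$ for all $t>0$; the classical generating-function identity $\mathbf{E}[e^{-t\tau}]=1-\sqrt{1-e^{-2t}}$, which satisfies $\mathbf{E}[e^{-t\tau}]\le 1-c\sqrt t$ for small $t$, then lets me optimize in $t$ to get the stated decay for $j\le N$, the range $j>N$ being vacuous because $\tau_l\ge2$. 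Finally, writing $e^{\lambda m/\sqrt N}=1+\tfrac{\lambda}{\sqrt N}\int_0^{m}e^{\lambda t/\sqrt N}\,dt$ for integer $m$, taking expectations, applying Fubini and the tail bound, and substituting $t=\sqrt N\,s$, I obtain
\begin{equation*}
	\mathbf{E}\!\left[\exp\!\big(\lambda N^{-1/2}L^0_S(2N)\big)\right]\ \le\ 1+\lambda K\int_0^{\infty}e^{\lambda s-c_0 s^2}\,ds ,
\end{equation*}
which is finite and independent of $N$; this closes the proof.

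The step I expect to be the real obstacle is the uniform exponential-moment bound in Step 3. Crude moment estimates only give $\mathbf{E}\big[(L^0_S(2N))^r\big]\le r!\,(C'\sqrt N)^r$, hence $\sup_N\mathbf{E}[\exp(\lambda N^{-1/2}L^0_S(2N))]<\infty$ only for $\lambda$ below a threshold, whereas here $\lambda$ is an arbitrary constant fixed by $k$ and $\sup_N\|A_N\|_\infty$. Obtaining the genuine Gaussian tail — equivalently, sharp control of the small-$t$ Laplace transform of the return time $\tau$ — and combining it with the extra factor $N^{-1/2}$ extracted from the integral identity (a Riemann-sum effect) is what makes the estimate hold for all $\lambda$ uniformly in $N$.
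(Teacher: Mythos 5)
Your proposal is correct, and its first two steps are the paper's own reduction in different notation: conditioning on the walks and integrating out $\omega$ site by site is exactly how the paper produces the factors $1+X_{N,n}$, your bound $g_m(b)\le 1+2^{k-1}b^2$ (valid only when at least two walks occupy the site) plays the role of the paper's estimate $X_{N,n}\le (c+1)^k N^{-1/2}U^{(n)}$ with $U^{(n)}=\sum_{i<j}\mathbb{1}_{\{S^{(i)}_n=S^{(j)}_n\}}$, and both arguments then pass through $1+x\le e^x$ and H\"older over the $\binom{k}{2}$ pairs to reduce the theorem to $\sup_N\mathbf{E}\left[\exp\left(\lambda N^{-1/2}L^0_S(2N)\right)\right]<\infty$ for every fixed $\lambda>0$. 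Where you genuinely diverge is in proving this last bound, which is the paper's Theorem \ref{theorem: boundedness of exponential}. The paper truncates the return times, forms the exponential martingale $M^N_n=\exp(-\beta X^N_n+n\lambda_N(\beta))$, applies optional stopping at $\gamma_N$, and then needs the asymptotic $\liminf_N\sqrt{N}\,\lambda_N(\beta/N)\ge c(\beta)$ with $c(\beta)\to\infty$ to cover arbitrary $\lambda$. You instead write $\{L^0_S(2N)\ge j\}=\{\tau_1+\cdots+\tau_j\le 2N\}$ and run a Chernoff bound using the closed-form generating function $\mathbf{E}[e^{-t\tau}]=1-\sqrt{1-e^{-2t}}$; optimizing in $t$ (with the range $j>N$ vacuous since $\tau_l\ge 2$) yields the Gaussian tail $\mathbf{P}(L^0_S(2N)\ge j)\le Ke^{-c_0 j^2/N}$, which is strictly stronger information than the exponential moment itself and makes the uniformity in $\lambda$ transparent after the substitution $t=\sqrt{N}s$. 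Both proofs ultimately rest on the same analytic fact, the $\sqrt{t}$ singularity of $1-\mathbf{E}[e^{-t\tau}]$ at $t=0^+$ coming from the $k^{-3/2}$ tail of the return time; your route is more elementary in that it avoids stopping-time bookkeeping, at the price of invoking the explicit generating function, whereas the paper's martingale argument only needs the one-sided estimate $\mathbf{P}(T_1=2k)\ge Ck^{-3/2}$ of its Lemma \ref{lemma: first exitting time}.
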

Notice that even though $k$ is fixed in our study, the definition of $\mathfrak{Z}$ does not depend on $k$. So, the above sequence $\left(\mathfrak{Z}_N\left( N^{-1/4} A_N  \right) , N \in \mathbb{N}\right)$ is also uniformly bounded in $L^p$ for all $p \in \mathbb{N}$, which implies directly the following corollary:
\begin{corollary}
	\label{corollary: L^k integrability}
For a sequence of real functions $A_1,A_2,...$ on $\mathbb{N}\times\mathbb{Z}$ such that $ \sup_{N} \| A_N\|_{\infty} <+\infty$, the sequence of random variables $\left(\mathfrak{Z}_N\left( N^{-1/4} A_N  \right) , N \in \mathbb{N}\right)$ is uniformly $L^k$-integrable.
\end{corollary}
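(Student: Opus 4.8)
The plan is to obtain this as a soft consequence of Theorem~\ref{theorem:uniform integrability of Z_k}, using precisely the observation recorded just above the statement: even though the integer $k\ge 2$ stays fixed throughout the paper, the random variable $\mathfrak{Z}_N(N^{-1/4}A_N)$ is constructed only from $\omega$ and $S^{(1)}$, with no reference to $k$. Hence Theorem~\ref{theorem:uniform integrability of Z_k} is in truth a statement about this single sequence of random variables and continues to hold with the exponent $k$ replaced by any fixed positive integer $p$. Applying it at $p=k+1$ gives
\[
\limsup_N \mathbf{E}\left[\left(\mathfrak{Z}_N\left(N^{-1/4}A_N\right)\right)^{k+1}\right] < +\infty .
\]
Since for each fixed $N$ the product defining $\mathfrak{Z}_N(N^{-1/4}A_N)$ is bounded by the deterministic constant $(1+N^{-1/4}\|A_N\|_\infty)^N$, every term of this sequence lies in all $L^p$, so the $\limsup$ bound upgrades to $M:=\sup_N \mathbf{E}\big[|\mathfrak{Z}_N(N^{-1/4}A_N)|^{k+1}\big]<+\infty$.

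Next I would invoke the classical criterion for uniform integrability: if a sequence $(Y_N)_N$ satisfies $\sup_N\mathbf{E}[|Y_N|^{1+\varepsilon}]<+\infty$ for some $\varepsilon>0$, then $(Y_N)_N$ is uniformly integrable, since $\mathbf{E}\big[|Y_N|\,\mathbb{1}_{\{|Y_N|>c\}}\big]\le c^{-\varepsilon}\,\mathbf{E}[|Y_N|^{1+\varepsilon}]$ tends to $0$ uniformly in $N$ as $c\to+\infty$ (equivalently, de la Vallée-Poussin applied to $u\mapsto u^{1+\varepsilon}$). I would apply this with $Y_N:=\big(\mathfrak{Z}_N(N^{-1/4}A_N)\big)^k$ and $1+\varepsilon:=\tfrac{k+1}{k}$, so that $|Y_N|^{1+\varepsilon}=|\mathfrak{Z}_N(N^{-1/4}A_N)|^{k+1}$ has supremum $M<+\infty$ over $N$. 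This shows that $\big(|\mathfrak{Z}_N(N^{-1/4}A_N)|^k\big)_N$ is uniformly integrable, which is exactly the assertion that $\big(\mathfrak{Z}_N(N^{-1/4}A_N)\big)_N$ is uniformly $L^k$-integrable.

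I do not expect any genuine obstacle here: all the analytic content is already carried by Theorem~\ref{theorem:uniform integrability of Z_k}, and the only point deserving a moment's care is the claim that the conclusion of that theorem is insensitive to the value of the fixed exponent, so that one may legitimately run it at exponent $k+1$ and then descend to uniform $L^k$-integrability through the elementary implication ``bounded in $L^{1+\varepsilon}$ $\Rightarrow$ uniformly integrable''.
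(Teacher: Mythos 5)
Your proposal is correct and follows essentially the same route as the paper: the text immediately preceding the corollary observes that the definition of $\mathfrak{Z}_N$ does not involve $k$, so Theorem \ref{theorem:uniform integrability of Z_k} yields uniform boundedness in $L^p$ for every $p$, and boundedness in $L^{k+1}$ gives uniform $L^k$-integrability by the standard ``bounded in $L^{1+\varepsilon}$ implies uniformly integrable'' criterion. Your write-up merely makes explicit two small points the paper leaves implicit (upgrading the $\limsup$ to a $\sup$ via the deterministic bound on each $\mathfrak{Z}_N$, and the Chebyshev-type estimate behind the criterion), which is harmless.
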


\section{$U$-Statistics: related notions and limit theorems }
\label{section: U-statistics}

Let $E^N_n:=\{\mathbf{i} \in [\![1,N]\!]^n : \mathbf{i}_j \ne \mathbf{i}_l \text{ for } j \ne l\}$.
In this paper, we are interested in sums of the form:
\begin{equation}
	\sum_{ \mathbf{i} \in E^N_n} \sum_{\substack{ \mathbf{z} \in \mathbb{Z}^n\\ \mathbf{i} \leftrightarrow \mathbf{z}}} \overline{g_N}(\mathbf{i},\mathbf{z}) A_N(\mathbf{i},\mathbf{z}) \omega(\mathbf{i},\mathbf{z}) ,
	\label{equation: the sums}
\end{equation}

for some weight functions $\overline{g_N}$ specified later. The notation $\mathbf{i} \leftrightarrow \mathbf{z}$ means that:
\begin{notation}
	$\mathbf{i} \leftrightarrow \mathbf{z}$ means that for all $j \in [\![1,n]\!]$, the corresponding $j$-th coordinates of $\mathbf{i}$ and $\mathbf{z}$, namely $\mathbf{i}_j$ and $\mathbf{z}_j$, have the same parity. 
\end{notation} 
We will see that sums of this type appear naturally when we expand the partition functions $\mathfrak{Z}_N$ (see \eqref{equation: expansion of Z_N}).\\
The organization of this section is as follows: Sections \ref{subsection: U-statistics for space-time random environements} and \ref{subsection: Wiener chaos} introduce the framework of Theorem \ref{theorem: discrete Chaos to Wiener chaos} which is our main result on the convergence of sums of the form \eqref{equation: the sums}. Section \ref{subsection: Limit theorems for $U$-statistics} presents the proof of Theorem  \ref{theorem: discrete Chaos to Wiener chaos} and other related results.\\
The approach we used in this section is standard in the theory of $U$-statistics. Interested readers can consult the book \cite{Korolyuk1994} of Korolyuk and Borovskikh for a more rigourous introduction of this theory.
\subsection{Introduction of $U$-statistics $S^N_n$}\label{subsection: U-statistics for space-time random environements}

We first make precise the definition of the weight functions $(\overline{g_N}, N \in \mathbb{N})$ in the above sum.\\
Let $g$ be a function  in $L^2([0,1]^n \times \mathbb{R}^n)$. For each $N$, the weight functions $\overline{g_N}$ associated to $g$ is defined by the following procedure:\\
First, we partition the space $(0,1]^n \times \mathbb{R}^n$  in rectangles of the form:
$$\mathcal{R}^N_n:= \left\{\left( \frac{\mathbf{i}-\mathbf{1}}{N},\frac{\mathbf{i}}{N}\right]\times \left( \frac{\mathbf{z}-1}{\sqrt{N}},\frac{\mathbf{z}+\mathbf{1}}{\sqrt{N}}\right] : \mathbf{i} \in D^N_n, \mathbf{z} \in \mathbb{Z}^n, \mathbf{i} \leftrightarrow \mathbf{z} \right\} .$$
with $\mathbf{1}$ being the vector of ones and  $D^N_n$ being the \textit{integer simplex}:
\begin{equation}
	\label{equation: integer simplex}
	D^N_n := \{ \mathbf{i} \in [\![1,N]\!]^n : 1 \le \mathbf{i}_1 <\mathbf{i}_2\dots <\mathbf{i}_n \le N \}.
\end{equation}
Visually, $\mathcal{R}^N_n$ is a collection of nonoverlapping translations of the base rectangle:
$$\left( \frac{0}{N}, \frac{1}{N} \right]^n \times \left( \frac{-1}{\sqrt{N}} , \frac{1}{\sqrt{N}} \right]^n.$$
Then, the function $\overline{g_N}$ is defined as the average of $g$ on each rectangles above.\\
More precisely, for any $(\mathbf{t},\mathbf{x}) \in (0,1]^n\times \mathbb{R}^n$, $\overline{g_N}(\mathbf{t},\mathbf{x})$ is defined as the mean:
$$\overline{g_N}( \mathbf{t}, \mathbf{x}) := \frac{1}{|R|} \int_R g(\mathbf{s},\mathbf{y}) d\mathbf{s}\,d\mathbf{y} ,$$
where $R$ is the unique rectangle in $\mathcal{R}^N_n$ that contains $(\mathbf{t},\mathbf{x})$, and $|R|$ denotes the Lebesque measure of $R$. In probabilistic terms, $\overline{g_N}$ is simply the conditional expectation of $g$ onto the rectangles of $\mathcal{R}^N_n$. We note that $|R|= 2^nN^{-3n/2}$. This term will appear recurrently in most of our computations.\\
Suppose $(A_N, N \in \mathbb{N})$ is a sequence of real-valued functions on $\mathbb{N}\times \mathbb{Z}$.
\begin{notation}
	For any $n$-tuple $\mathbf{i} \in E^N_n$ and $n$-tuple $\mathbf{z} \in \mathbb{Z}^n$,  $A_N( \mathbf{i}, \mathbf{z})$ and $\omega( \mathbf{i}, \mathbf{z})$ denote
	\begin{align*}
		A_N( \mathbf{i}, \mathbf{z} )&:= A_N( \mathbf{i}_1, \mathbf{z}_1 )A_N( \mathbf{i}_2, \mathbf{z}_2 )...A_N( \mathbf{i}_n, \mathbf{z}_n ) ,\\
		\omega( \mathbf{i}, \mathbf{z} )&:= \omega( \mathbf{i}_1, \mathbf{z}_1 )\omega( \mathbf{i}_2, \mathbf{z}_2)...\omega( \mathbf{i}_n, \mathbf{z}_n ) ,
	\end{align*}
	with $\mathbf{i}_j$ being the $j$-th coordinate of $\mathbf{i}$ as defined previously.
\end{notation}
Now, we define the weighted $U$-statistics $\mathcal{S}^N_n$.
\begin{definition}
	\label{definition: weighted U-statistics}
	Suppose $(A_n, n \in \mathbb{N})$ is a sequence of bounded real-valued functions on $\mathbb{N}\times \mathbb{Z}$. For any function $g \in L^2([0,1]^n \times \Bbb{R}^n)$, the $U$- statistics $\mathcal{S}^{N}_n$ is defined as:
	\begin{equation}
		\label{equation: weighted U-statistics}
\mathcal{S}^N_{n}(g) := 2^{n/2} \sum_{\mathbf{i} \in E^N_n} \sum_{\substack{\mathbf{z} \in \mathbb{Z}^n :\\ \mathbf{i} \leftrightarrow \mathbf{z}}} \overline{g_N}\left( \frac{\mathbf{i}}{N}, \frac{\mathbf{z}}{\sqrt{N}} \right)A_N( \mathbf{i},\mathbf{z}) \omega(\mathbf{i},\mathbf{z}).
	\end{equation}
\end{definition}
We first give some basic properties of the $U$-statistics $S^N_n$.
\begin{proposition}
	\label{theorem: properties of U-statistics}
	Suppose $(A_N, N \in \mathbb{N})$ is a sequence of bounded real-valued functions on $\mathbb{N}\times \mathbb{Z}$. For all positive integers $n$ and $N$, we have:

	\begin{itemize}
		\item[i.] (Well-posedness) $\mathcal{S}^N_n(g)$ is well-defined and has zero mean for all $g \in L^2([0,1]^n \times \mathbb{R}^n)$.
		\item[ii.] (Linearity)  For all $f,g \in  L^2([0,1]^n \times \mathbb{R}^n)$, $\alpha,\beta  \in \mathbb{R}$
		$$\mathcal{S}^{N}_n(\alpha f+\beta g)= \alpha \mathcal{S}^{N}_n(f)+\beta \mathcal{S}^N_n(g).$$
		\item[iii.] ($L^2$-boundedness) If $c>0$ is a number such that such that $\|A_N\|_{\infty} \le c$, then for all $g \in L^2([0,1]^n\times \mathbb{R}^n)$ :
		$$\mathbf{E}[\mathcal{S}^N_n(g)^2] \le  c^{2n}N^{3n/2}\| g\|^2_{2}.$$
		\item[iv.]
		 (Uncorrelatedness of $U$-statistics of different orders) If $n_1,n_2$ are two different positive integers, then $\forall g_i \in L^2( [0,1]^{n_i}\times \mathbb{R}^{n_i}) \quad i=1,2,$
		$$\mathbf{E}[\mathcal{S}_{n_1}^N(g_1)\mathcal{S}_{n_2}^N(g_2)] =0.$$
	\end{itemize}
\end{proposition}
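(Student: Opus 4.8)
The plan is to regard $\mathcal{S}^N_n(g)$ as a constant multiple of a homogeneous Rademacher chaos of order $n$ and to read off all four assertions from two elementary inputs. First, the monomials $\omega_S:=\prod_{(i,z)\in S}\omega(i,z)$, indexed by finite subsets $S\subset\mathbb{N}\times\mathbb{Z}$, are orthonormal in $L^2(\mathbf{P})$ and satisfy $\mathbf{E}[\omega_S]=0$ whenever $S\neq\emptyset$, since the $\omega(i,z)$ are independent, centred and square to $1$. Second, $g\mapsto\overline{g_N}$ is a conditional expectation — onto the $\sigma$-algebra generated by the rectangles of $\mathcal{R}^N_n$ — hence linear and an $L^2$-contraction, and the resulting function is constant on each rectangle, every rectangle having Lebesgue volume $|R|=2^nN^{-3n/2}$. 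The structural observation tying these together is that, because $\mathbf{i}\in E^N_n$ has pairwise distinct coordinates, $\omega(\mathbf{i},\mathbf{z})$ is exactly the monomial $\omega_S$ for the $n$-point set $S=\{(\mathbf{i}_1,\mathbf{z}_1),\dots,(\mathbf{i}_n,\mathbf{z}_n)\}$; and since $\overline{g_N}$ vanishes off the rectangles of $\mathcal{R}^N_n$ (which are indexed by the time-ordered simplex $D^N_n$), only time-ordered tuples charge $\mathcal{S}^N_n(g)$, so two contributing tuples with the same underlying set must coincide.

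For \textbf{(i)} and \textbf{(ii)}, I would first fix $\mathbf{i}$ and observe that the inner series over $\mathbf{z}\in\mathbb{Z}^n$ has partial sums with orthogonal increments whose squared $L^2$-norms are dominated by $\sum_{\mathbf{z}}\overline{g_N}(\mathbf{i}/N,\mathbf{z}/\sqrt N)^2 A_N(\mathbf{i},\mathbf{z})^2$, which is finite by the estimate of (iii); hence the series converges in $L^2$, and summing the finitely many admissible $\mathbf{i}$ shows $\mathcal{S}^N_n(g)$ is a well-defined element of $L^2(\mathbf{P})$. Taking expectations term by term (justified by this $L^2$-convergence) and using $\mathbf{E}[\omega_S]=0$ for the nonempty index sets involved gives $\mathbf{E}[\mathcal{S}^N_n(g)]=0$. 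Linearity of $\mathcal{S}^N_n$ in $g$ is then immediate from the linearity of $g\mapsto\overline{g_N}$ and of the convergent double sum.

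For \textbf{(iii)}, I would expand $\mathbf{E}[\mathcal{S}^N_n(g)^2]$, use orthonormality of the $\omega_S$ to kill all cross terms, and — by the simplex remark — reduce to the diagonal sum
\[
\mathbf{E}\big[\mathcal{S}^N_n(g)^2\big]=2^n\sum_{\mathbf{i}\in D^N_n}\;\sum_{\substack{\mathbf{z}\in\mathbb{Z}^n\,:\\ \mathbf{i}\leftrightarrow\mathbf{z}}}\overline{g_N}\!\Big(\tfrac{\mathbf{i}}{N},\tfrac{\mathbf{z}}{\sqrt N}\Big)^{2}A_N(\mathbf{i},\mathbf{z})^2 .
\]
Using $\|A_N\|_\infty\le c$ to bound $A_N(\mathbf{i},\mathbf{z})^2\le c^{2n}$, recognizing the remaining sum of squared values of $\overline{g_N}$ over the rectangles of $\mathcal{R}^N_n$ as $|R|^{-1}\|\overline{g_N}\|_2^2$, and invoking the contraction $\|\overline{g_N}\|_2\le\|g\|_2$, the powers of $2$ cancel and leave exactly $\mathbf{E}[\mathcal{S}^N_n(g)^2]\le c^{2n}N^{3n/2}\|g\|_2^2$. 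For \textbf{(iv)}, with $n_1\neq n_2$, I would expand $\mathbf{E}[\mathcal{S}^N_{n_1}(g_1)\mathcal{S}^N_{n_2}(g_2)]$ (again through truncated sums and $L^2$-convergence to exchange summation and expectation) as a sum of terms proportional to $\mathbf{E}[\omega_{S_1}\omega_{S_2}]$ with $|S_1|=n_1$ and $|S_2|=n_2$; squaring the shared factors gives $\omega_{S_1}\omega_{S_2}=\prod_{(i,z)\in S_1\triangle S_2}\omega(i,z)$, so this expectation equals $\mathbf{1}\{S_1=S_2\}$, which is impossible when $n_1\neq n_2$, and every term vanishes.

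Each step is individually routine; the point requiring the most care is the combinatorial bookkeeping in \textbf{(iii)} — pinning down exactly which tuples $(\mathbf{i},\mathbf{z})$ charge $\mathcal{S}^N_n(g)$ and verifying that no permutation factor $n!$ survives — because this, together with the normalisation $2^{n/2}$ in Definition \ref{definition: weighted U-statistics} that absorbs the $2^n$ coming from $|R|$, is what makes the constant in (iii) come out to exactly $c^{2n}N^{3n/2}$ rather than an inflated version.
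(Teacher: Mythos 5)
Your proposal is correct and follows essentially the same route as the paper: orthonormality of the Rademacher monomials $\omega_S$ over distinct index sets, the Cauchy--Schwarz/contraction property of the local averaging $g\mapsto\overline{g_N}$, and the volume count $|R|=2^nN^{-3n/2}$ give (iii) and (iv), with (i)--(ii) essentially immediate. The only (harmless) difference is that the paper establishes well-posedness for general $g\in L^2$ by first treating compactly supported $g$ and then extending by density and the Lipschitz continuity of $g\mapsto\mathcal{S}^N_n(g)$, whereas you argue directly via $L^2$-convergence of the orthogonal series.
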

\begin{proof}
	Assume that $f$ and $g$ have compact supports, then the sums in $S^N_n(f)$ and $S^N_n(g)$ have a finite number of terms; thus, point $i$ is trivial. Point $ii$ is also trivial by recalling that  $\omega$ is a collection of centered random variables. Now, for point $iii$, observe that for any $\mathbf{i}, \mathbf{i}' \in E^N_{n}, \mathbf{x},\mathbf{x}' \in \mathbb{Z}^{n}$:
	$$\mathbf{E} \bigg[  \prod_{l=1}^{n} \omega( \mathbf{i}_l, \mathbf{x}_l)\prod_{l=1}^{n} \omega( \mathbf{i}'_l, \mathbf{x}'_l)\bigg]= \mathbb{1}_{ \{\mathbf{i} = \mathbf{i}' , \mathbf{x}=\mathbf{x}' \} } .$$
	Hence
	\begin{align*}
		\mathbf{E}\bigg[ \mathcal{S}^N_n(g)^2\bigg] & = 2^n \sum_{\mathbf{i} \in E^N_n} \sum_{\substack{\mathbf{z} \in \mathbb{Z}^n\\ \mathbf{i} \leftrightarrow \mathbf{z}}}   A_N( \mathbf{i},\mathbf{z})^2 	  \overline{g_N}\left( \frac{\mathbf{i}}{N}, \frac{\mathbf{z}}{\sqrt{N}} \right)^2      
		\\
		&\le 2^n \sum_{\mathbf{i} \in [\![ 1,N]\!]^n} \sum_{\substack{\mathbf{z} \in \mathbb{Z}^n\\ \mathbf{i} \leftrightarrow \mathbf{z}}}   c^{2n}   \frac{1}{|\mathcal{R}|} \int_{\mathcal{R}}g( \mathbf{t}, \mathbf{y})^2 d\mathbf{t}d\mathbf{y}
		\\
		& =N^{3n/2}c^{2n} \int_{[0,1]^n} \int_{\mathbb{R}^n} g(\mathbf{t},\mathbf{y})^2d\mathbf{t} \mathbf{y}.
	\end{align*}
	The last inequality is simply an application of the Cauchy-Schwarz lemma.\\
	So our theorem is valid for compactly supported functions. In other words, $g \mapsto S^N_n(g)$ is a linear Lipschitz continuous mapping that maps the space $L^2_{\text{compact}}([0,1]^n \times \mathbb{R}^n)$ into $L^2(\mathbf{P})$. Hence, all the properties $i,ii,iii$ can be extended naturally to all $L^2([0,1]^n \times \mathbb{R}^n)$ by the density of $L^2_{\text{compact}}([0,1]^n \times \mathbb{R}^n)$  in $L^2([0,1]^n \times \mathbb{R}^n)$.\\
	For the covariance relation in point $iv$, one can observe that if $\mathbf{i} \in E^N_{n_1}, \mathbf{x} \in \mathbb{Z}^{n_1},\mathbf{i}' \in E^N_{n_2}, \mathbf{x}' \in \mathbb{Z}^{n_2}$, then
	$$\mathbf{E} \bigg[  \prod_{l=1}^{n_1} \omega( \mathbf{i}_l, \mathbf{x}_l)\prod_{l=1}^{n_2} \omega( \mathbf{i}'_l, \mathbf{x}'_l)\bigg]= 0 .$$
	because there is necessarily one $\omega$ term that is distinct from the others, and its independence from the rest implies zero expectation.\\
	Hence, $iv$ is clearly true if $g_1,g_2$ have compact supports. The extension to non-compactly-supported functions can also be obtained by a density argument as above.
\end{proof}
Now, to characterize rigorously the limit of the $U$-statistics $(S^N_n, N \ge 1)$, we need to introduce the Wiener chaos. 
\subsection{Wiener chaos}
\label{subsection: Wiener chaos}
\subsubsection{White noise and stochastic integration on $[0,1]\times \mathbb{R}$}
\label{subsubsection: white noise}
This section recalls the elementary theory of white noise and stochastic integration on the measure space $([0,1]\times \mathbb{R}, \mathcal{B}, dt\otimes dx)$. Here $\mathcal{B}$ is the Borel $\sigma$-algebra, and $dt\otimes dx$ denotes Lebesque measure on $[0,1]\times \mathbb{R}$. For more details on Wiener chaos, we invite readers to read \cite[Chapter 1]{Nualart2006} or \cite[Chapter 11]{Kallenberg1997}.\\
Let $\mathcal{B}_f$ be the collection of all Borel sets of $[0,1]\times \mathbb{R}$ with finite Lebesgue measure. Observe that $\mathcal{B}= \sigma( \mathcal{B}_f)$.
\begin{definition}
A \textbf{white noise }on $[0,1]\times \mathbb{R}$ is a collection of mean zero Gaussian random variables indexed by $\mathcal{B}_f$
$$W= \{  W(A): A \in \mathcal{B}_f\}$$
such that for any $h \in \mathbb{N}$ and every finite collection $(A_1,A_2,\dots,A_h) $ of elements of $\mathcal{B}_f$, the tuple $(W(A_1),\dots, W(A_h))$ is a $h$-dimensional Gaussian vector, with mean zero and covariance structure: 
$$\mathbf{E}[ W(A)W(B)]= | A \cap B |.$$
\end{definition}
So in particular, if $A$ and $B$ are disjoint then $W(A)$ and $W(B)$ are independent.\\
For any $g \in L^2( [0,1] \times \mathbb{R}, \mathcal{B}, dt\,dx)$, the stochastic integral 
$$I_1 (g) := \int_0^1 \int_{\mathbb{R}} g(t,x)W(dt\,dx)$$
is constructed by first defining $I_1$ on simple functions then extending $I_1$ via density arguments \cite[p.210]{Kallenberg1997}. In the end, for  each $g \in L^2( [0,1]\times \mathbb{R})$, we have that $I_1(g) \sim N( 0, \| g\|_{2}^2)$, so in particular, $I_1$ preserves the Hilbert space structure of $L^2( [0,1]\times \mathbb{R})$,
$$\mathbf{E}( I_1(g)I_1(h))= \int_{0}^1 \int g(t,x)h(t,x)dt\,dx$$
This construction idea can be extended to higher dimensions (see  \cite[p. 9,10]{Nualart2006}) to give a sense of the following notation of multiple stochastic integrals for any $n>1$ and function $g \in L^2([0,1]^n \times \mathbb{R}^n)$: 
$$I_n(g):= \int_{[0,1]^n } \int_{\mathbb{R}^n} g( \mathbf{t}, \mathbf{x}) W^{\otimes n}(d\mathbf{t} d\mathbf{x}),$$
where $W^{\otimes n}(d\mathbf{t} d\mathbf{x}):=W(d\mathbf{t}_1 d\mathbf{x}_1) W(d\mathbf{t}_2 d\mathbf{x}_2)\cdots W(d\mathbf{t}_n d\mathbf{x}_n)$.\\  
However, the mapping $I_n: L^2([0,1]^n\times \mathbb{R}^n) \rightarrow L^2( \mathbf{P})$ is no longer injective. For example, if $A,B$ are two disjoint compacts of $[0,1]\times \mathbb{R}$, we see that $I_2( \mathbb{1}_{A \times B}) = W(A)W(B)= I_2( \mathbb{1}_{B \times A})$ even though $\mathbb{1}_{A\times B} \ne  \mathbb{1}_{B \times A}$. Nonetheless, we observe that the restriction of $I_n$ on the subspace $L_{\text{sym}}^2([0,1]^n \times \mathbb{R}^n) $ (see Definition \ref{definition: symmetry}) of $L^2([0,1]^n \times \mathbb{R}^n) $ is an isometry \cite[p. 9,10]{Nualart2006}.
\begin{definition}
	\label{definition: symmetry}
	A function $g \in L^2( [0,1]^n \times \mathbb{R}^n)$ is said to be symmetric if $g( \mathbf{t}, \mathbf{x})= g(  \pi \mathbf{t}, \pi \mathbf{x} )$ for all  $(\mathbf{t}, \mathbf{x})\in[0,1]^n \times \mathbb{R}^n$ , permutation $\pi$ on  $\{1, \hdots, n\}$, where $\pi \mathbf{t}  :=	\mathbf{t}_{\pi(1)}, \hdots,\mathbf{t}_{\pi(n)}, \pi \mathbf{x} :=	 \mathbf{x}_{\pi(1)}, \hdots,\mathbf{x}_{\pi(n)}$.\\
The set $L^2_{\text{sym}}([0,1]^n \times \mathbb{R}^n)$ is then defined as the subspace of all symmetric functions of $L^2([0,1]^n \times \mathbb{R}^n)$.
\end{definition}
\begin{notation} For any $n \in \mathbb{N}$, $g \in L^2 ([0,1]^n \times \mathbb{R}^n)$,  we denote by $\text{Sym}\,g(\mathbf{t},\mathbf{x})$ the symmetrization of $g$ defined by:
	\begin{equation}
		\text{Sym}\,g(\mathbf{t},\mathbf{x}):= \frac{1}{n!} \sum_{\pi \text{ is a permutation of }\{1,\dots,n\}} g( \pi \mathbf{t}, \pi \mathbf{x}).
	\end{equation}
\end{notation} 
In summary, we have the following theorem which is a standard result in the theory of stochastic integration:
\begin{theorem}
		\label{theorem: Isometry and Stochastic Integration}
	There exists a continuous linear mapping $I_n: L^2 ([0,1]^n\times \mathbb{R}^n) \rightarrow L^2(\mathbf{P})$ such that for any $n$-tuple of disjoint finite measurable sets $A_1,A_2,\cdots A_n$ in $\mathcal{B}([0,1]\times \mathbb{R})$:
	$$I_n(\mathbb{1}_{ A_1 \times A_2 \cdots \times A_n})= W(A_1)W(A_2)\cdots W(A_n)$$
	Furthermore, for all  $g \in L^2([0,1]^n \times \mathbb{R}^n)$, \begin{equation}
		\mathbf{E}\left[I_n(g)^2\right] \le \| g\|^2_2,
	\end{equation}
	and the equality occurs if and only if $g$ is symmetric.
\end{theorem}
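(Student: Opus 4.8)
The plan is to build $I_n$ on a dense subclass of elementary functions, establish the isometry-type identity there, and then extend by continuity. Let $\mathcal{E}_n\subset L^2([0,1]^n\times\mathbb{R}^n)$ be the linear span of the indicators $\mathbb{1}_{A_1\times\cdots\times A_n}$ with $A_1,\dots,A_n\in\mathcal{B}_f$ pairwise disjoint; by passing to a common refinement one can write any $g\in\mathcal{E}_n$ as $g=\sum_{\mathbf{i}}c_{\mathbf{i}}\,\mathbb{1}_{B_{i_1}\times\cdots\times B_{i_n}}$ for a single finite pairwise disjoint family $B_1,\dots,B_m$, with $c_{\mathbf{i}}=0$ whenever two entries of $\mathbf{i}$ agree. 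First I would check that $\mathcal{E}_n$ is dense in $L^2([0,1]^n\times\mathbb{R}^n)$: the ``diagonal'' set $\{(\mathbf{t},\mathbf{x}):(\mathbf{t}_j,\mathbf{x}_j)=(\mathbf{t}_\ell,\mathbf{x}_\ell)\text{ for some }j\ne\ell\}$ is Lebesgue-null (each coordinate pair lives in the $2$-dimensional $[0,1]\times\mathbb{R}$, so each such constraint lowers the dimension by $2$), hence step functions vanishing in a neighbourhood of it are dense, and after refining to a common grid on $[0,1]\times\mathbb{R}$ such step functions lie in $\mathcal{E}_n$.

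Next I would set $I_n(g):=\sum_{\mathbf{i}}c_{\mathbf{i}}\,W(B_{i_1})\cdots W(B_{i_n})$ and verify this does not depend on the chosen representation --- a routine refinement argument using additivity of $W$ on disjoint sets --- so that $I_n$ is linear on $\mathcal{E}_n$. The core of the proof is the second-moment computation. For $f,g\in\mathcal{E}_n$ written over a common disjoint family, I would expand $\mathbf{E}[I_n(f)I_n(g)]$ into a sum of terms $\mathbf{E}[W(B_{i_1})\cdots W(B_{i_n})W(B_{j_1})\cdots W(B_{j_n})]$; since the $W(B_\ell)$ are independent and centred, and each index occurs at most once inside $\mathbf{i}$ and at most once inside $\mathbf{j}$, such a term vanishes unless $\mathbf{j}$ is a permutation of $\mathbf{i}$, in which case it equals $\prod_\ell|B_{i_\ell}|$. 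Re-summing over permutations yields the orthogonality $\mathbf{E}[I_n(f)I_m(g)]=0$ for $n\ne m$ and, for $n=m$, an identity $\mathbf{E}[I_n(f)I_n(g)]=c_n\langle\text{Sym}\,f,\text{Sym}\,g\rangle_{L^2}$ with a purely combinatorial constant $c_n$ coming from counting permutations, together with $I_n(g)=I_n(\text{Sym}\,g)$; fixing the normalisation consistently with the prescribed values turns this into the isometry $\mathbf{E}[I_n(g)^2]=\|\text{Sym}\,g\|_2^2$. Since $\text{Sym}$ is the orthogonal projection of $L^2([0,1]^n\times\mathbb{R}^n)$ onto $L^2_{\text{sym}}([0,1]^n\times\mathbb{R}^n)$, one has $\|\text{Sym}\,g\|_2\le\|g\|_2$ with equality exactly when $g$ is symmetric, which gives the bound $\mathbf{E}[I_n(g)^2]\le\|g\|_2^2$ and its equality case on $\mathcal{E}_n$.

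Finally, boundedness of $I_n$ on $\mathcal{E}_n$ together with density yields a unique continuous linear extension to all of $L^2([0,1]^n\times\mathbb{R}^n)$; the bound $\mathbf{E}[I_n(g)^2]\le\|g\|_2^2$, its equality characterisation, and the relation $I_n=I_n\circ\text{Sym}$ pass to the closure by continuity, while $I_n(\mathbb{1}_{A_1\times\cdots\times A_n})=W(A_1)\cdots W(A_n)$ holds by construction. The step I expect to be most delicate is the moment computation: one must carefully identify which Gaussian products survive once it is used that the indices are distinct within each tuple, and then re-index the survivors as a sum over permutations so as to recover precisely the symmetrised inner product with the right constant; the density and well-definedness arguments are standard but must be spelled out so that the extension is genuinely canonical.
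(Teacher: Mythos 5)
Your construction is essentially the standard one that the paper outsources to Nualart's book: define $I_n$ on linear combinations of indicators of products of pairwise disjoint sets, check density (off-diagonal step functions, using that the diagonal is Lebesgue-null) and well-posedness under refinement, compute second moments, and extend by continuity. That part is sound and more self-contained than the paper's proof, which simply quotes the construction and the ``isometry'' on symmetric functions. Your closing step, $\|\text{Sym}\,g\|_2\le\|g\|_2$ because $\text{Sym}$ is an orthogonal projection, is equivalent to the paper's Cauchy--Schwarz argument, and your identification of the equality case is the same.

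The gap is exactly at the point you flag as delicate, and it cannot be repaired by ``fixing the normalisation consistently with the prescribed values.'' Carrying out your moment computation, the surviving terms are those with $\mathbf{j}=\pi(\mathbf{i})$ for some permutation $\pi$, and re-summing gives
\begin{equation*}
\mathbf{E}\left[I_n(f)I_n(g)\right]=\sum_{\mathbf{i}}c_{\mathbf{i}}\Big(\sum_{\pi}d_{\pi(\mathbf{i})}\Big)\prod_{\ell}|B_{i_\ell}|=n!\,\langle \text{Sym}\,f,\text{Sym}\,g\rangle_{L^2},
\end{equation*}
so your combinatorial constant is $c_n=n!$, and it is forced by the prescribed values $I_n(\mathbb{1}_{A_1\times\cdots\times A_n})=W(A_1)\cdots W(A_n)$: rescaling $I_n$ to make $c_n=1$ would destroy that requirement. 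What your construction actually proves is therefore $\mathbf{E}[I_n(g)^2]=n!\,\|\text{Sym}\,g\|_2^2\le n!\,\|g\|_2^2$, not the claimed $\mathbf{E}[I_n(g)^2]=\|\text{Sym}\,g\|_2^2\le\|g\|_2^2$. Indeed the stated inequality fails for symmetric $g$ once $n\ge 2$: with $A,B$ disjoint of measure $1$ and $g=\tfrac12(\mathbb{1}_{A\times B}+\mathbb{1}_{B\times A})$ one has $I_2(g)=W(A)W(B)$, so $\mathbf{E}[I_2(g)^2]=1$ while $\|g\|_2^2=\tfrac12$; and the non-symmetric $\mathbb{1}_{A\times B}$ attains equality. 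To be fair, this defect sits in the theorem statement itself and in the paper's own proof (the step labelled ``Isometry'' there is off by the same factor $n!$). The identity that is actually needed downstream is $\mathbf{E}[I_n(g)^2]=n!\,\|\text{Sym}\,g\|_2^2$, which for the simplex-supported kernels $\varrho_n$ reduces to $\|g\|_2^2$ because the $n!$ permuted copies of $\Delta_n\times\mathbb{R}^n$ are pairwise disjoint; you should either prove the $n!$-version or restrict the claim to functions supported on $\Delta_n\times\mathbb{R}^n$.
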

\begin{proof}
	The first part is a summary of the construction of multiple stochastic integration in \cite[p.8,9]{Nualart2006}. For the inequality, observe that:
	\begin{align*}
		\mathbf{E}&\left[I_n(g)^2\right] = \mathbf{E}\left[I_n(\text{Sym }g)^2\right] \stackrel{\text{Isometry}}{=} \|\text{Sym }(g)\|_2^2 \\
		& =\left\|  \frac{1}{n!}  \sum_{\pi \text{ is a permutation}} g\circ \pi \right\|^2_2  \\
		& \le  \frac{1}{n!} \sum_{\pi \text{ is a permutation}} \| g\circ \pi\|_2^2= \|g\|^2_2 
	\end{align*}
by Cauchy-Schwarz's inequality. Here, by abuse of notation, $(g\circ \pi ) ( \mathbf{t},\mathbf{x})$ denotes $g(\pi \mathbf{t},\pi \mathbf{x})$.
\end{proof}
\subsubsection{Wiener chaos on $[0,1]\times \mathbb{R}$}
This section provides a short introduction to the Wiener chaos's theory.\\
Wiener chaos may be regarded as a way of representing random variables as infinite sums of multiple stochastic integrals.\\
For a white noise $W$, we denote by $\mathcal{F}_W$ the complete $\sigma$-algebra generated by random variables $(W(A), A \in \mathcal{B}_f)$, the Wiener chaos decomposition theorem states (see \cite[Theorem 1.1.2]{Nualart2006}):
\begin{proposition}(Wiener chaos decomposition)\label{proposition: Wiener chaos decomposition}
For every random variable $X \in L^2 (\Omega,  \mathcal{F}_W, \mathbf{P})$, there is a unique sequence of symmetric functions $g_n \in L^2_{\text{Sym}}( [0,1]^n \times \mathbb{R}^n),n \ge 1$, such that:
$$X=\sum_{n=0}^{\infty}I_n(g_n) .$$
Here $g_0$ is simply a constant and $I_0$ is the identity mapping on the constants.
\end{proposition}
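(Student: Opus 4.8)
The plan is to follow the classical route via Hermite polynomials together with a density (completeness) argument. First I would introduce, for each $n \ge 0$, the $n$-th \emph{Wiener chaos} $\mathcal{H}_n$ as the closed linear subspace of $L^2(\Omega,\mathcal{F}_W,\mathbf{P})$ spanned by the random variables $H_n(W(h))$ with $h \in L^2([0,1]\times\mathbb{R})$ and $\|h\|_2 = 1$, where $H_n$ is the $n$-th Hermite polynomial and $\mathcal{H}_0$ is the constants. The bridge to multiple integrals is the identity $I_n(h^{\otimes n}) = \|h\|_2^n\,H_n\!\big(W(h)/\|h\|_2\big)$, proved by induction on $n$ matching the Hermite recursion against the recursion $I_n(h^{\otimes n}) = W(h)\,I_{n-1}(h^{\otimes(n-1)}) - (n-1)\|h\|_2^2\,I_{n-2}(h^{\otimes(n-2)})$ for iterated integrals. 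Since finite linear combinations of the tensors $h^{\otimes n}$ are dense in $L^2_{\text{sym}}([0,1]^n\times\mathbb{R}^n)$ and $I_n$ is an isometry on that space (Theorem \ref{theorem: Isometry and Stochastic Integration}), one gets $\mathcal{H}_n = \{ I_n(g) : g \in L^2_{\text{sym}}([0,1]^n\times\mathbb{R}^n) \}$, and this image is closed, being the isometric image of a Hilbert space.

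Next I would record the orthogonality $\mathcal{H}_n \perp \mathcal{H}_m$ for $n \ne m$: for the generating products $H_n(W(h))H_m(W(h'))$ this is the standard computation of Hermite moments under a bivariate Gaussian (equivalently, at the level of integrals, the vanishing of the relevant $\omega$-type products in the spirit of Proposition \ref{theorem: properties of U-statistics}(iv)), and it passes to the closed spans by continuity. Thus $\bigoplus_{n\ge 0}\mathcal{H}_n$ is a well-defined closed subspace, and \emph{once existence is known} the representation $X = \sum_n I_n(g_n)$ is forced to be the orthogonal decomposition of $X$, with $I_n(g_n)$ the orthogonal projection of $X$ onto $\mathcal{H}_n$; uniqueness of the symmetric $g_n$ is then immediate from the injectivity of $I_n$ on $L^2_{\text{sym}}$ (the equality case in Theorem \ref{theorem: Isometry and Stochastic Integration}).

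The heart of the argument, and the step I expect to be the main obstacle, is \textbf{completeness}: $\overline{\bigoplus_{n\ge 0}\mathcal{H}_n} = L^2(\Omega,\mathcal{F}_W,\mathbf{P})$. I would argue by contradiction: suppose $X \in L^2(\mathcal{F}_W)$ is orthogonal to every $\mathcal{H}_n$. Fix finitely many $h_1,\dots,h_k$; since the Hermite polynomials form a basis of the polynomials, every monomial in $W(h_1),\dots,W(h_k)$ lies in $\bigoplus_{n\le N}\mathcal{H}_n$ for $N$ large, so $X$ is orthogonal to all such polynomials. As the Gaussian vector $(W(h_1),\dots,W(h_k))$ has moments of every order and polynomials are dense in the associated $L^2$, this yields $\mathbf{E}[X\,\phi(W(h_1),\dots,W(h_k))] = 0$ for every bounded Borel $\phi$, i.e. $\mathbf{E}[X \mid \sigma(W(h_1),\dots,W(h_k))] = 0$; letting the finite families exhaust a countable total family in $L^2([0,1]\times\mathbb{R})$ and invoking that $\mathcal{F}_W = \sigma(\{W(h)\})$ together with martingale convergence gives $X = \mathbf{E}[X\mid\mathcal{F}_W] = 0$. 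A slicker variant replaces polynomials by the exponentials $\mathcal{E}(h) = \exp\!\big(W(h) - \frac{1}{2}\|h\|_2^2\big)$, which are total in $L^2(\mathcal{F}_W)$ and admit the $L^2$-convergent expansion $\mathcal{E}(h) = \sum_{n\ge 0}\frac{1}{n!}\,I_n(h^{\otimes n})$; orthogonality of $X$ to every chaos then gives $\mathbf{E}[X\,\mathcal{E}(h)] = 0$ for all $h$, hence $X = 0$. Combining completeness, orthogonality, and the identification $\mathcal{H}_n = I_n(L^2_{\text{sym}})$ produces the symmetric kernels $g_n$ with $X = \sum_{n\ge 0} I_n(g_n)$, completing the proof.
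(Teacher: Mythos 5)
The paper offers no proof of this proposition---it is stated as a standard result and attributed to Nualart (Theorem 1.1.2)---and your argument is precisely the classical proof given there: identify each chaos $\mathcal{H}_n$ via Hermite polynomials through $I_n(h^{\otimes n})=\|h\|_2^n H_n(W(h)/\|h\|_2)$, use orthogonality of the chaoses, and obtain completeness from the totality of the exponentials $\mathcal{E}(h)$ (or of polynomials in the $W(h)$) in $L^2(\Omega,\mathcal{F}_W,\mathbf{P})$. Your proposal is correct and takes essentially the same approach as the source the paper relies on.
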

In fact,  for $n \ge 1$, the terms of the chaos series are all mean zero, so $g_0$ must be the mean of $X$. Moreover, by the orthogonality of $I_{n_1}(g_1)$ and $I_{n_2}(g_2)$ for $n_1 \ne n_2$(see \cite[p.9]{Nualart2006}), we have the relation:
$$\mathbf{E}[X^2]= \sum_{n=0}^{\infty} \| g_n\|^2_{2}.$$
Now, we define two important spaces of collections of functions:
\begin{definition}
	\label{definition: Fock spaces}
	The Fock space over $L^2([0,1]\times \mathbb{R})$ is defined to be the Hilbert space:
	\begin{equation}
		F:= \left\{ \mathbf{g}=(g_0,g_1,\dots) \in \bigoplus_{n=0}^{\infty} L^2( [0,1]^n \times \mathbb{R}^n) : \sum_{n=0}^{\infty} \|g_n\|^2_2 <\infty \right\}
	\end{equation}
	equipped with the inner product $ \langle \mathbf{g},\mathbf{f} \rangle_F =\sum_{n=0}^{\infty} \langle \mathbf{g}_n,\mathbf{f}_n\rangle_{L^2([0,1]^n\times \mathbb{R}^n)}$.\\
	Then, the symmetric Fock space $F_{\text{sym}}$ is defined as the Hilbert subspace of $F$ that contains only collections of symmetric functions, i.e., $$F_{\text{sym}} := F \bigcap \left(\bigoplus_{n=0}^{\infty} L_{\text{sym}}^2( [0,1]^n \times \mathbb{R}^n)\right).$$
\end{definition}
The result in Proposition \ref{proposition: Wiener chaos decomposition} works also in reverse, that is, the mapping 
\begin{align*}
	\mathbf{I} \colon &F_{\text{sym}} \phantom{abcde} \xrightarrow{\phantom{abcdefg}}  L^2( \Omega, \mathcal{F}_W, \mathbf{P})\\
	 &(g_0,g_1,\dots) \xmapsto{\phantom{L^\infty(T)}} \sum_{n \ge 0}I_n(g_n)
\end{align*}
is an isometry. This fact will be useful for the justification for the well-posedness of $\mathcal{Z}_{a}$ in Section \ref{section:Limit theorems for partition functions}.

\subsection{Limit theorems for $U$-statistics}
\label{subsection: Limit theorems for $U$-statistics}
In this section, we prove two limit theorems (Theorem  \ref{theorem: convergence of U-statistics} and Theorem \ref{theorem: discrete Chaos to Wiener chaos}) for our $U$-statistics $S^N_n$ defined by \eqref{equation: weighted U-statistics}. They are extensions of Theorem 4.3  and Lemma 4.4  in \cite{TomAlberts2014} with non-constant $A_n$. The result of the second theorem will be useful for the rest of this paper while the first is crucial for the proof of the second. 
\begin{theorem}(Convergence of $U$-statistics to Stochastic Integrals)
	\newline 	\label{theorem: convergence of U-statistics}
	Suppose the functions $A_1,A_2,\dots$ in the definition \ref{definition: weighted U-statistics} of the $U$-statistics sastify the following conditions:
	\begin{itemize}
		\item[i.]  $\sup_{N} \|  A_N\|_{\infty}  <+\infty .$
		\item[ii.] There is a measurable function $a \in L^{\infty}( [0,1]\times \mathbb{R})$ such that:
		$$\lim_{N \rightarrow +\infty} A_N( [t,x]_N)= a(t,x) \quad \text{a.e.}$$
	\end{itemize}
	Then, for any positive integer $n$ and function $g \in L^2( [0,1]^n\times \mathbb{R}^n)$, we have:
	$$N^{-3n/4}\mathcal{S}^N_n(g) \xrightarrow[N \rightarrow+\infty]{\text{(d)}} \int_{[0,1]^n} \int_{ \Bbb{R}^n}g( \mathbf{t},\mathbf{x})a^{\otimes n}(\mathbf{t},\mathbf{x})W^{\otimes n}(d\mathbf{t}d\mathbf{x}) .$$
	Moreover, for any finite collection of $n_1,\hdots,n_m\in \mathbb{N}_0$ and $g_1,\hdots,g_m$ with $g_i \in L^2([0,1]^{n_i}\times\mathbb{R}^{n_i})$, one has the joint convergence
	$$(N^{-3n_1/4}\mathcal{S}^{N}_{n_1}(g_1),...,N^{-3n_1/4}\mathcal{S}^{N}_{n_m}(g_m)) \xrightarrow[N \rightarrow +\infty]{(d)} (\tilde{I}_{n_1}(g_1),...,(\tilde{I}_{n_m}(g_m)) .$$
	where, for $n \ge 1$,
	\begin{equation}
		\tilde{I}_n(g):=\int_{[0,1]^n} \int_{ \Bbb{R}^n}g( \mathbf{t},\mathbf{x})a^{\otimes n}(\mathbf{t},\mathbf{x})W^{\otimes n}(d\mathbf{t}d\mathbf{x}) ,
	\end{equation}
	and
	\begin{equation}
		\label{definition: a otimes}
		a^{\otimes n}(\mathbf{t},\mathbf{x}) = a( \mathbf{t}_1,\mathbf{x}_1) \cdots a( \mathbf{t}_n,\mathbf{x}_n).
	\end{equation}
\end{theorem}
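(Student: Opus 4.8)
The plan is to follow the classical route for showing that degenerate homogeneous sums of independent random variables converge to multiple Wiener--It\^o integrals, the only genuinely new point being the non-constant weights $(A_N)$, which enter solely through hypothesis~(ii). First I would use linearity together with the uniform-in-$N$ bound $\mathbf{E}[(N^{-3n/4}\mathcal{S}^N_n(g))^2]\le c^{2n}\|g\|_2^2$ of Proposition~\ref{theorem: properties of U-statistics} and the matching continuity of $g\mapsto\tilde{I}_n(g)$ (Theorem~\ref{theorem: Isometry and Stochastic Integration}) to reduce, by a ``converging together'' argument, to the case where each $g_r$ is a single product indicator $\mathbb{1}_{(B_1\times C_1)\times\cdots\times(B_n\times C_n)}$ with $B_l,C_l$ bounded intervals and the time-projections $B_1,\dots,B_n$ \emph{pairwise disjoint}; call this class $\mathcal{D}$. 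Its span is dense in each $L^2([0,1]^n\times\mathbb{R}^n)$: arbitrary box indicators are recovered by refining the partitions of the time- and space-axes, and the remainders produced along the way that fail the disjoint-time condition are supported on sets shrinking to the time-diagonals $\{t_l=t_{l'}\}$ as the mesh goes to $0$, hence have $L^2$-norm tending to $0$, so by the same a priori bound their $U$-statistics are uniformly negligible.

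\emph{The model computation.} Fix $g=\mathbb{1}_{(B_1\times C_1)\times\cdots\times(B_n\times C_n)}\in\mathcal{D}$ (the order $n=0$ is the trivial constant case). Since the $B_l$ are pairwise disjoint, every $\mathbf{i}\in E^N_n$ occurring in $\mathcal{S}^N_n(g)$ has automatically distinct coordinates, and the sum factorizes: up to an error vanishing in $L^2(\mathbf{P})$ --- accounting for the discrepancy between $\overline{g_N}$ and $g$ and for the rectangles of $\mathcal{R}^N_n$ straddling the boundaries $\partial(B_l\times C_l)$, both controlled via Proposition~\ref{theorem: properties of U-statistics} and $\overline{g_N}\to g$ in $L^2$ --- one has
\begin{equation*}
	N^{-3n/4}\mathcal{S}^N_n(g)=\prod_{l=1}^{n}X^N_l,\qquad
	X^N_l:=\sqrt{2}\,N^{-3/4}\!\!\sum_{\substack{(i,z)\,:\,i/N\in B_l,\ z/\sqrt{N}\in C_l\\ i\leftrightarrow z}}\!\! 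A_N(i,z)\,\omega(i,z).
\end{equation*}
The index blocks are disjoint, so the $X^N_l$ are independent; each is a normalized sum of order $N^{3/2}$ bounded, centered, independent variables of size $O(N^{-3/4})$, whence the Lindeberg condition holds trivially and $X^N_l$ converges in law to a centered Gaussian of variance $\sigma_l^2$, where, applying hypothesis~(ii) and bounded convergence to $\tfrac{2}{N^{3/2}}\sum_{(i,z)}A_N(i,z)^2$, one finds $\sigma_l^2=\int_{B_l\times C_l}a(t,x)^2\,dt\,dx$. Hence $N^{-3n/4}\mathcal{S}^N_n(g)$ converges in law to a product of independent centered Gaussians with variances $\sigma_l^2$, which is exactly the law of $I_n(g\,a^{\otimes n})=\prod_{l=1}^{n}I_1(\mathbb{1}_{B_l\times C_l}\,a)=\tilde{I}_n(g)$, the product rule being legitimate because the sets $B_l\times C_l$ are disjoint.

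\emph{Joint convergence and conclusion.} Given finitely many $g_r\in\mathcal{D}$ of orders $n_r$, I would refine the time- and space-partitions so that every interval among the $B_l,C_l$ is a union of common atoms $\alpha\subseteq[0,1]\times\mathbb{R}$; then, up to $L^2$-negligible errors, each $N^{-3n_r/4}\mathcal{S}^N_{n_r}(g_r)$ becomes a fixed polynomial --- a product of sums --- in the elementary linear statistics $Y^N_\alpha:=\sqrt{2}\,N^{-3/4}\sum_{(i,z)\in\alpha}A_N(i,z)\omega(i,z)$. The multivariate Lindeberg CLT gives the joint convergence of $(Y^N_\alpha)_\alpha$ to a centered Gaussian vector $(Y_\alpha)_\alpha$ with $\mathbf{E}[Y_\alpha Y_\beta]=\delta_{\alpha\beta}\int_\alpha a^2$, so $(Y_\alpha)_\alpha$ has the law of $(I_1(\mathbb{1}_\alpha a))_\alpha$ for pairwise disjoint atoms; the continuous mapping theorem then upgrades this to joint convergence of the polynomials to the same polynomials in $(Y_\alpha)_\alpha$, which by the product rule are precisely $(\tilde{I}_{n_1}(g_1),\dots,\tilde{I}_{n_m}(g_m))$. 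A final $3\varepsilon$/converging-together argument --- using the uniform $L^2$-bound of Proposition~\ref{theorem: properties of U-statistics} on the discrete side and the continuity of $I_n$ on the continuous side --- extends the joint convergence to arbitrary $g_r\in L^2([0,1]^{n_r}\times\mathbb{R}^{n_r})$.

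\emph{Expected main obstacle.} The CLT itself is soft: on disjoint blocks everything is a genuine sum of bounded independent variables. The real work --- and the only place where non-constancy of $A_N$ is felt --- is the bookkeeping of the reductions, namely checking that the difference between $\overline{g_N}$ and $g$, the boundary rectangles, the tuples with coinciding time-coordinates excluded from $E^N_n$, the non-disjoint-time remainders from partition refinement, and the tails of the $L^2$-approximation all produce terms vanishing in $L^2(\mathbf{P})$ \emph{uniformly in $N$}; this rests entirely on the a priori estimate of Proposition~\ref{theorem: properties of U-statistics}, together with hypothesis~(ii), used through bounded convergence, to pin down the limiting variances.
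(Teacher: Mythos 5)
Your proposal is correct and follows essentially the same route as the paper: a Lindeberg CLT for the first-order statistics, factorization of $\mathcal{S}^N_n$ over products supported on disjoint time blocks, identification of the limit via the product rule for multiple integrals of disjointly supported tensor functions, and extension to general $g$ and to joint convergence by density plus the uniform $L^2$ bound of Proposition \ref{theorem: properties of U-statistics} and a converging-together argument. The only cosmetic differences are that you work with box indicators rather than products of compactly supported $L^2$ functions, and you invoke a multivariate Lindeberg CLT where the paper uses Cram\'er--Wold.
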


\begin{theorem}	\label{theorem: discrete Chaos to Wiener chaos}
	Suppose the functions $A_1,A_2,\dots$ in the definition \ref{definition: weighted U-statistics} of the $U$-statistics satisfy the following conditions:
	\begin{itemize}
		\item[i.]  $\sup_{N} \|  A_N\|_{\infty}  <c$ for some $c>0$.
		\item[ii.] There is a measurable function $a \in L^{\infty}( [0,1]\times \mathbb{R})$ such that:
		$$\lim_{N \rightarrow +\infty} A_N( [t,x]_N)= a(t,x) \quad \text{a.e.}$$
	\end{itemize}
Then if $(g_n, n \in \mathbb{N}_0)$ is a sequence of functions such that $(c^ng_n, n \in \mathbb{N}_0)$ belongs to the Fock space $F$, we have:
$$ \sum_{n=0}^{\infty} N^{-3n/4} \mathcal{S}^N_n(g_n) \xrightarrow[N \rightarrow \infty]{(d)} \sum_{n=0}^{\infty} \int_{[0,1]^n \times \mathbb{R}^n} g_n( \mathbf{t}, \mathbf{x})a^{\otimes n}( \mathbf{t}, \mathbf{x}) W^{\otimes n}(d \mathbf{t} d \mathbf{x}).$$
\end{theorem}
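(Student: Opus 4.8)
The plan is to deduce Theorem \ref{theorem: discrete Chaos to Wiener chaos} from the finite joint convergence in Theorem \ref{theorem: convergence of U-statistics} by a standard truncation argument: write each side as the sum of a ``head'' consisting of the first $M$ terms, plus a ``tail'', and control the tail uniformly in $N$ in $L^2$ using the $L^2$-estimates already at our disposal.

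\medskip

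First I would fix $M \in \mathbb{N}$ and split
\[
\sum_{n=0}^{\infty} N^{-3n/4}\mathcal{S}^N_n(g_n) = \underbrace{\sum_{n=0}^{M} N^{-3n/4}\mathcal{S}^N_n(g_n)}_{=:X_N^{(M)}} + \underbrace{\sum_{n=M+1}^{\infty} N^{-3n/4}\mathcal{S}^N_n(g_n)}_{=:R_N^{(M)}},
\]
and similarly split the limit object into $Y^{(M)} := \sum_{n=0}^{M}\tilde I_n(g_n)$ and a tail $T^{(M)}$. By Theorem \ref{theorem: convergence of U-statistics}, for each fixed $M$ the finite sum $X_N^{(M)}$ converges in distribution to $Y^{(M)}$ as $N \to \infty$ (continuous mapping applied to the joint convergence of the vector $(N^{-3n/4}\mathcal{S}^N_n(g_n))_{n \le M}$). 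On the limit side, $\| \tilde I_n(g_n)\|_{L^2}^2 \le \|a\|_\infty^{2n}\|g_n\|_2^2 \le \|c^n g_n\|_2^2$ (using $\|a\|_\infty \le c$, which follows from condition (i) and the a.e.\ convergence in (ii)), so $\sum_n \tilde I_n(g_n)$ converges in $L^2(\mathbf P)$ by the Fock-space assumption and orthogonality of chaoses of different orders; hence $T^{(M)} \to 0$ in $L^2$, and in particular in probability, as $M \to \infty$.

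\medskip

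The key uniform estimate is for the discrete tail $R_N^{(M)}$: by property (iv) of Proposition \ref{theorem: properties of U-statistics} the $\mathcal{S}^N_n$ of distinct orders are uncorrelated, and by property (iii), $\mathbf{E}[\mathcal{S}^N_n(g_n)^2] \le c^{2n} N^{3n/2}\|g_n\|_2^2$, so
\[
\mathbf{E}\!\left[\big(R_N^{(M)}\big)^2\right] = \sum_{n=M+1}^{\infty} N^{-3n/2}\,\mathbf{E}[\mathcal{S}^N_n(g_n)^2] \le \sum_{n=M+1}^{\infty} c^{2n}\|g_n\|_2^2 = \sum_{n=M+1}^{\infty}\|c^n g_n\|_2^2,
\]
which is a tail of the convergent Fock-space series and therefore tends to $0$ as $M \to \infty$, uniformly in $N$. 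This is the crucial point; everything else is bookkeeping. Then I would invoke the standard ``approximation by a triangular array'' lemma (e.g.\ Theorem 3.2 in Billingsley): since $X_N^{(M)} \xrightarrow{(d)} Y^{(M)}$ for each $M$, $Y^{(M)} \to Y := \sum_n \tilde I_n(g_n)$ in $L^2$ hence in distribution as $M\to\infty$, and $\limsup_N \mathbf{P}(|R_N^{(M)}| > \varepsilon) \le \varepsilon^{-2}\sum_{n>M}\|c^n g_n\|_2^2 \to 0$ as $M \to \infty$ for every $\varepsilon > 0$, we conclude $X_N := \sum_n N^{-3n/4}\mathcal{S}^N_n(g_n) \xrightarrow{(d)} Y$. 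One minor point to address for cleanliness: the series defining $X_N$ is an infinite sum of random variables, so I would first note it converges in $L^2(\mathbf P)$ for each fixed $N$ by the same orthogonality-plus-summability argument, so that $X_N$ is a well-defined random variable and $R_N^{(M)} = X_N - X_N^{(M)}$ makes sense.

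\medskip

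The main obstacle is not conceptual but lies in making sure the hypotheses line up: one must check that condition (i)--(ii) indeed forces $\|a\|_\infty \le c$ (so that the limiting chaos series is controlled by the same Fock element $(c^n g_n)$ that controls the discrete side), and that the $L^2$-estimates of Proposition \ref{theorem: properties of U-statistics} can be applied term-by-term with a single constant $c$ valid for all $N$ — both are immediate from the stated assumptions, but they are what make the uniform tail bound work. Beyond that, the argument is the routine ``interchange of limits'' scheme, so I would keep the write-up short.
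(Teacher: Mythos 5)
Your proposal is correct and follows essentially the same route as the paper's proof: truncate at level $M$, apply the joint convergence of Theorem \ref{theorem: convergence of U-statistics} to the head, control the discrete tail uniformly in $N$ via the orthogonality and $L^2$-bound of Proposition \ref{theorem: properties of U-statistics}, control the limit tail via the (Fock-space) isometry, and conclude with Billingsley's triangular-array lemma (Lemma \ref{lemma: a standard result on weak convergence}). The only cosmetic difference is that the paper first reduces to symmetric $g_n$ to use the exact isometry of $\mathbf{I}$, whereas you work directly with the inequality $\mathbf{E}[I_n(g)^2]\le\|g\|_2^2$; both are fine.
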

\begin{remark}
	 		Notice that by the definition, $(\mathbf{i},\mathbf{z}) \mapsto A_n( \mathbf{i}, \mathbf{z})$ is symmetric, hence  $\mathcal{S}^N_n(g_n)= \mathcal{S}^N_n(\text{Sym}(g_n))$. This allows us to only consider symmetric functions in the proof of Theorem \ref{theorem: discrete Chaos to Wiener chaos}.
	 		\label{remark: symmetry of S}
\end{remark}

Besides, to prove the above two theorems, we will repeatedly use the following lemma in Billingsley \cite[Theorem 3.2]{Billingsley1999}:
\begin{lemma}
	\label{lemma: a standard result on weak convergence}
	For $n,N \in \mathbb{N}$, let $Y^N_n, Y_n, Y^N, Y$ be real-valued random variables defined on a common probability space such that $Y_n  \xrightarrow[n \rightarrow +\infty]{(d)} Y$ and that for all $n$, $Y^N_n \xrightarrow[N \rightarrow +\infty]{(d)} Y_n$. If for each $\epsilon>0$, $$\lim_{n \rightarrow \infty} \limsup_{N \rightarrow \infty} \mathbf{P}( |Y^N_n-Y^N| \ge \epsilon)=0$$
	Then $Y^{N} \xrightarrow[N\rightarrow \infty]{(d)} Y$.
\end{lemma}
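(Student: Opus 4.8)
The plan is to verify weak convergence straight from the definition, showing that $\mathbf{E}[f(Y^N)]\to\mathbf{E}[f(Y)]$ for every bounded continuous $f$. By the Portmanteau theorem it suffices to treat bounded Lipschitz functions, since these form a convergence-determining class for weak convergence of laws on $\mathbb{R}$; this reduction is exactly what lets the in-probability control in hypothesis (iii) interact with the expectations. So I would fix such an $f$ with $\|f\|_\infty\le M$ and Lipschitz constant $L$, and introduce the auxiliary index $n$ through the triangle inequality
$$|\mathbf{E}[f(Y^N)]-\mathbf{E}[f(Y)]|\le \underbrace{|\mathbf{E}[f(Y^N)]-\mathbf{E}[f(Y^N_n)]|}_{\text{(I)}}+\underbrace{|\mathbf{E}[f(Y^N_n)]-\mathbf{E}[f(Y_n)]|}_{\text{(II)}}+\underbrace{|\mathbf{E}[f(Y_n)]-\mathbf{E}[f(Y)]|}_{\text{(III)}}.$$

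Here (II) tends to $0$ as $N\to\infty$ for each fixed $n$ because $Y^N_n\xrightarrow{(d)}Y_n$ (hypothesis ii) and $f$ is bounded continuous, while (III) tends to $0$ as $n\to\infty$ because $Y_n\xrightarrow{(d)}Y$ (hypothesis i). The only term coupling the two indices on the same probability space is (I), and for it I would split the expectation over $\{|Y^N-Y^N_n|\le\epsilon\}$ and its complement, applying the Lipschitz bound on the first event and the crude bound $2M$ on the second, to obtain, for every $\epsilon>0$,
$$\text{(I)}\le\mathbf{E}\,|f(Y^N)-f(Y^N_n)|\le L\epsilon+2M\,\mathbf{P}(|Y^N-Y^N_n|\ge\epsilon).$$

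The proof then closes by taking the three limits in a prescribed order. Fixing $\epsilon>0$ and $n$, I would first take $\limsup_{N\to\infty}$ of the triangle inequality: term (II) vanishes, leaving
$$\limsup_{N}|\mathbf{E}[f(Y^N)]-\mathbf{E}[f(Y)]|\le L\epsilon+2M\,\limsup_{N}\mathbf{P}(|Y^N-Y^N_n|\ge\epsilon)+|\mathbf{E}[f(Y_n)]-\mathbf{E}[f(Y)]|.$$
This holds for every $n$, so I next let $n\to\infty$: the final summand vanishes by hypothesis (i), and $\lim_{n}\limsup_{N}\mathbf{P}(|Y^N-Y^N_n|\ge\epsilon)=0$ is precisely hypothesis (iii). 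What remains is $\limsup_{N}|\mathbf{E}[f(Y^N)]-\mathbf{E}[f(Y)]|\le L\epsilon$, and letting $\epsilon\downarrow0$ gives $\mathbf{E}[f(Y^N)]\to\mathbf{E}[f(Y)]$. Since $f$ ranged over all bounded Lipschitz functions, Portmanteau yields $Y^N\xrightarrow{(d)}Y$.

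The computation is routine once the framework is in place; the single point demanding care, and the only genuine obstacle, is the ordering of the limits $N\to\infty$, $n\to\infty$, $\epsilon\downarrow0$. The hypotheses are tailored to exactly this order: the double limit in (iii) is written $\lim_{n}\limsup_{N}$, forcing the $N$-limit before the $n$-limit, and $\epsilon$ must be sent to zero last so that $n$ is still free when the probability estimate is used. Any other sequence of limits would break the matching with the hypotheses, so the real task is the bookkeeping that keeps $n$ and $\epsilon$ frozen through the $\limsup_{N}$ step.
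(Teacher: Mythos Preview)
Your proof is correct and complete; the decomposition via bounded Lipschitz test functions and the careful ordering $\limsup_N$, then $n\to\infty$, then $\epsilon\downarrow 0$ is exactly the standard route. The paper itself does not give a proof of this lemma at all but simply cites it as \cite[Theorem 3.2]{Billingsley1999}, so your write-up supplies what the paper only references.
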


\begin{proof}[Proof of Theorem \ref{theorem: convergence of U-statistics}] Let $c:= \sup_N \|A_N\|_{\infty}.$\\
	\textbf{Step 1:}  Let $n =1$ and assume that $g$ is a continuous and compactly supported function. 
	\newline
	Rewrite $N^{-3/4}\mathcal{S}^N_1(g)$ as a weighted sum of elements in $\omega$:
	\begin{equation}
		\label{equation: n=1, g continuous compactly supported}
			N^{-3/4}\mathcal{S}^N_1(g)= \sum_{i \in [\![1,N]\!]} \sum_{\substack{z \in \mathbb{Z}}} \mathbb{1}_{ i \leftrightarrow z}.2^{1/2}N^{-3/4}  \overline{g}_N	\left( \frac{{i}}{N}, \frac{{z}}{\sqrt{N}} \right)A_N( {i},{z}) \omega({i},{z}) .
	\end{equation}
	Because, $g$ has compact support, the number of nonzero terms in the above sum is finite. Besides, recall that $\omega$ is a collection of independent random variables having zero mean and  variance $1$, one has:
	\begin{align*}\mathbf{E}\left[N^{-3/2}\mathcal{S}^N_1(g)^2 \right]= 2N^{-3/2}\sum_{i \in [\![1,N]\!]} \sum_{\substack{z \in \mathbb{Z}\\ i \leftrightarrow x}} \overline{g_N}\left( \frac{{i}}{N}, \frac{{z}}{\sqrt{N}} \right)^2A_N( {i},{z})^2
		\\
		= \int_{[0,1]} \int_{\mathbb{R}} \overline{g_N}^2(t,x)A_N\left([(t,x)]_N\right)^2dt\,dx \xrightarrow[N \rightarrow +\infty]{} \int_{[0,1]}\int_{\mathbb{R}} g^2a^2(t,x)dt\,dx .
	\end{align*}
	where the last convergence follows from the dominated convergence theorem and the fact that $g$ is continuous and compactly supported.\\
	Hence, by Lindeberg-Feller's central limit theorem  (see \cite[Theorem 3.4.5]{Durrett2010}), 
	$$N^{-3/4}\mathcal{S}^N_1(g) \xrightarrow[N \rightarrow+\infty]{(d)}\mathcal{N}(0, \int_{[0,1]\times \mathbb{R}} g^2a^2(t,x)dt\,dx),$$
	Note that the second condition of Lindeberg-Feller is satisfied because the supremum of all the terms in Equation \eqref{equation: n=1, g continuous compactly supported} is smaller or equal to $$ 2^{1/2}N^{-3/4}\|g\|_{\infty} \|A_N\|_{\infty},$$ which converges to $0$ when $N \rightarrow \infty$. Thus, by the isometry of the stochastic integration $I_1$, we have proved that:
	$$ N^{-3/4}\mathcal{S}^N_1(g) \xrightarrow[N \rightarrow+\infty]{(d)} \int_{[0,1]}\int_{\mathbb{R}}g(t,x)a(t,x)W(dt\,dx)=\tilde{I}_1(g).$$
	\textbf{Step 2:} Let $n=1$ and $g$ be any function in $L^2([0,1]\times \mathbb{R})$.\\
	 Because the space of continuous and compactly supported functions $C_{c}([0,1]\times \mathbb{R})$ is dense in $L^2([0,1]\times \mathbb{R})$ \cite[Theorem 4.12]{Brezis2011}, there exists a sequence of continuous and compactly supported functions $(g_m, m \in \mathbb{N}_0)$ converging to $g$ in $L^2$. 
	Thus, by combining with the fact that $\|a\|_{\infty} \le c$, this implies $$\tilde{I_1}(g_m) \xrightarrow[m \rightarrow \infty]{L^2} \tilde{I_1}(g).$$
	Besides, for all $N$, observe that:
	$$\mathbf{E}\left[N^{-3/2} \left(\mathcal{S}_1^N(g-g_m)\right)^2 \right] \le c^2 \| g-g_m\|^2_{2}.$$
	So the above observations and Step 1 give the following diagram:	
	\begin{center}
			\begin{tikzcd}[row sep=huge, column sep = huge]
			N^{-3/4}\mathcal{S}^N_1(g_m) \arrow{r}{(d)}[swap]{N \rightarrow +\infty} \arrow{d}[swap]{\text{in } L^2(\mathbf{P}) \text{ , uniformly in }N}{m \rightarrow+\infty} & \tilde{I}_1(g_m) \arrow{d}{L^2}[swap]{m \rightarrow+\infty}
			\\
			N^{-3/4}\mathcal{S}^N_1(g) & \tilde{I}_1(g)	
		\end{tikzcd}
	\end{center}
	Thus thanks to Lemma \ref{lemma: a standard result on weak convergence}, we duce that 
	$N^{-3/4}\mathcal{S}^N_1(g) \xrightarrow[N \rightarrow+\infty]{(d)} \tilde{I_1}(g).$
	\\
	\textbf{Step 3:} Now we will prove that for all $m \in \mathbb{N}$ and $m$ functions $g_1,g_2,\dots,g_m \in L^2([0,1]\times \mathbb{R})$, we have:
	\begin{equation}
		\label{equation: joint convergence}
			\left( N^{-3/4}\mathcal{S}^N_1(g_1), \dots,N^{-3/4}\mathcal{S}^N_1(g_m) \right) \xrightarrow[N \rightarrow \infty]{(d)} ( \tilde{I}_1(g_1),\dots,\tilde{I}_1(g_m) ).
	\end{equation}
	Indeed, for any $m$ real numbers $\alpha_1,\dots,\alpha_m$, our result in \textit{Step 1} shows that:
	$$\alpha_1 N^{-3/4}\mathcal{S}^N_1(g_1)+\cdots  \alpha_m N^{-3/4}\mathcal{S}^N_1(g_m)  \xrightarrow[N \rightarrow \infty]{(d)}  \alpha_1 \tilde{I}_1(g_1)+\cdots \alpha_m \tilde{I}_1(g_m).$$
	Thus, by Cramer-Wold Theorem \cite[Corollary 4.5]{Kallenberg1997}, the convergence \eqref{equation: joint convergence} is valid.\\
	\textbf{Step 4:} For $n>1$ and  $g$ of the form:
	\begin{equation}
		\label{equation: product of disjoint functions}
		g(\mathbf{t},\mathbf{x})=  g^{(1)}(\mathbf{t}_1,\mathbf{x}_1)g^{(2)}(\mathbf{t}_2,\mathbf{x}_2)\cdots g^{(n)}( \mathbf{t}_n,\mathbf{x}_n).
	\end{equation}
	where $g^{(j)} $ are functions of $ L^2([0,1]\times \mathbb{R})$ with disjoint compact supports.
	\newline
	As the supports of $g^{(j)}$ are disjoint compacts, if $N$ is large enough, the supports of $\overline{g^{(j)}_N}$ are also disjoint.
	Thus we have the first equality in the following argument:
	\newline
	$$N^{-3n/4}\mathcal{S}^{N}_n(g)= \prod_{j=1}^nN^{-3/4}\mathcal{S}^{N}_1(g^{(j)}) \xrightarrow[N \rightarrow \infty]{(d)} \prod_{j=1}^n \tilde{I}_1(g^{(j)})=\tilde{I}_n(g).$$
	The latter limit in law is obtained by using the convergence of the joint random variables $(N^{-3/4}\mathcal{S}^{N}_1(g^{(1)}) ,...,N^{-3/4}\mathcal{S}^{N}_1(g^{(n)}) ).$\\
	\textbf{Step 5:} Now, for any $m$-tuple of functions $g_1,g_2,\hdots,g_m$ of the form \eqref{equation: product of disjoint functions}, by a similar argument, one can show the joint convergence:
	$$\left( N^{-3n/4}\mathcal{S}^N_n(g_1), \hdots,N^{-3n/4}\mathcal{S}^N_n(g_m) \right) \xrightarrow[N \rightarrow +\infty ]{(d)} (\tilde{I}_n(g_1),\hdots, \tilde{I}(g_m)).$$
	Hence, by the linearity of $\mathcal{S}$ and $I_n$, for any linear combination $g$ of functions of the form \eqref{equation: product of disjoint functions}, one has the convergence 
	$N^{-3n/4}\mathcal{S}^{N}_n(g) \xrightarrow[N \rightarrow +\infty ]{(d)} \tilde{I}_n(g).$\\
Besides, the space of such linear combinations is dense in $L^2([0,1]^n \times \mathbb{R}^n)$(the space of step functions is dense in $L^2([0,1]^n \times \mathbb{R}^n)$ \cite[Proof of Theorem 4.13]{Brezis2011}, then we shrink the support of each step function in an appropriate way), by a same density argument as in \textit{Step 1}, one can conclude that for any $g \in L^2([0,1]^n \times \mathbb{R}^n).$:
		$$N^{-3n/4}\mathcal{S}^{N}_n(g) \xrightarrow[N \rightarrow +\infty ]{(d)} \tilde{I}_n(g).$$
	The proof for the desired joint convergence for different $n$ is just a repeat of\textit{ Step 4} and \textit{Step 5}.
\end{proof}
\begin{proof}[Proof of Theorem \ref{theorem: discrete Chaos to Wiener chaos}]Without loss of generality, we assume $c=1$.\\
Using Cauchy-Schwarz's inequality as in the proof of Theorem \ref{theorem: Isometry and Stochastic Integration}, one can show that:
$$\text{Sym } \mathbf{g} := ( \text{Sym }g_0,  \text{Sym }g_1, \text{Sym } g_2,\hdots) \in F_{ \text{Sym}}.$$
By the symmetry of $\mathcal{S}^N_n$ and the stochastic integrations $(\tilde{I}_n, n \in \mathbb{N}_0$), without loss of generality, we can suppose that $g \in F_{\text{Sym}}$ (see Remark \ref{remark: symmetry of S}).\\
Since $\mathbf{I}$ is an isometry from the symmetric Fock space into $L^2(\mathbf{P})$, we automatically get that $\sum_{n=0}^M \tilde{I}_n(g_n) \longrightarrow \sum_{n \ge 0} \tilde{I_n}(g_n)$ in $L^2( \mathbf{P})$, as $M$ goes to infinity. Since $\text{Var}(N^{-3n/4}\mathcal{S}^N_n(g_n)) \le  \| g_n\|_2^2$ (see Proposition \ref{theorem: properties of U-statistics}), this also implies
$$\sum_{n=0}^M N^{-3n/4}\mathcal{S}^N_n(g_n)\xrightarrow[M\rightarrow+\infty ]{}\sum_{n=0}^{\infty} N^{-3n/4}\mathcal{S}^N_n(g_n)$$
in $L^2(\mathbf{P})$, uniformly in $N$. Theorem \ref{theorem: convergence of U-statistics} implies that:
	$$\sum_{n=0}^M N^{-3n/4} \mathcal{S}^N_n(g_n ) \xrightarrow[N \rightarrow \infty]{(d)} \sum_{n=0}^M \tilde{I}_n(g_n ).$$
Combining these asymptotic results, we have by Lemma \ref{lemma: a standard result on weak convergence} the following diagram:
\begin{center}
		\begin{tikzcd}[row sep=huge, column sep = huge]
		\sum_{n=0}^M N^{-3n/4}S^N_n(g_n)
		\arrow{r}{(d)}[swap]{N \rightarrow +\infty} \arrow{d}[swap]{\text{in }L^2,\text{ uniformly in }N}{M \rightarrow+\infty}
		& \sum_{n=0}^M \tilde{I}_n(g_n)
		\arrow{d}{\text{in }L^2}[swap]{M \rightarrow+\infty}
		\\
		\sum_{n=0}^{\infty} N^{-3n/4}S^N_n(g_n)
		\arrow{r}{(d)}[swap]{N \rightarrow +\infty}
		&
		 \sum_{n=0}^{\infty} \tilde{I}_n(g_n) .
	\end{tikzcd}
\end{center}
\end{proof}

\section{Limit theorems for paritition functions}
\label{section:Limit theorems for partition functions}
In this section, we study the convergence of partition functions $\mathfrak{Z}_N$ (Definition \ref{definition: partition function Z}). First, we verify the well-posedness of the limit value $\mathcal{Z}_a$ given in Theorem \ref{theorem: main theorem}, for all $a \in L^{\infty}([0,1]\times \mathbb{R})$ by
		$$\mathcal{Z}_{a} := 1+\sum_{n=1}^{\infty} \int_{\Delta_n} \int_{\mathbb{R}^n} \prod_{i=1}^n \bigg[a(\mathbf{t}_i,\mathbf{x}_i)\varrho( \mathbf{t}_i-\mathbf{t}_{i-1}, \mathbf{x}_i-\mathbf{x}_{i-1})W(d\mathbf{t}_i, d\mathbf{x}_i)\bigg].  $$
where $\varrho$ is the gaussian kernel, and $W$ is the white noise based on the Lebesque measure on $[0,1]\times \mathbb{R}$.
\subsection{Study of $\mathcal{Z}_{a}$}

\subsubsection{Brownian motion and simple random walk}
\label{subsection: Brownian motion and simple random walks}
Let $(S_n, n \in \mathbb{N}_0)$ denote  a simple random walk on $\mathbb{Z}$  and $(B_t, t \in \mathbb{R}_{\ge 0})$ denote a Brownian motion on $\mathbb{R}$.
\\
For $i \in \mathbb{N}$, $ t \ge 0$ and $x \in \mathbb{R}$, we define: 
\begin{equation}
		\label{equation: transitions}
		p(i,x) := \mathbf{P}(S_i=x) \qquad \varrho(t,x) := \frac{e^{-x^2/2t}}{\sqrt{2\pi t}} 
\end{equation}
We will make heavy use of the finite dimensional distributions of both simple random walk and Brownian motion. For notations, we introduce for $n \in \mathbb{N}$, $\mathbf{i} \in D^N_n$ ($D^N_n$ being the integer simplex \eqref{equation: integer simplex}), $\mathbf{z} \in \mathbb{Z}^n$, $\mathbf{t} \in \Delta_n$ ($\Delta_n$ being the real simplex \eqref{equation: dimensional simplex}), $\mathbf{x} \in \mathbb{R}^n$:
\begin{equation}
	p_n(\mathbf{i},\mathbf{z}) := \prod_{j=1}^n p(\mathbf{i}_j-\mathbf{i}_{j-1},\mathbf{z}_j-\mathbf{z}_{j-1})= \mathbf{P}(S_{\mathbf{i}_1}=\mathbf{z}_1 , \dots ,S_{\mathbf{i}_n}=\mathbf{z}_n  ) ,
\end{equation}
and
\begin{equation}
\varrho_n(\mathbf{t},\mathbf{x}) := \prod_{j=1}^n \varrho(\mathbf{t}_j-\mathbf{t}_{j-1},\mathbf{x}_j-\mathbf{x}_{j-1}).
\end{equation}
\\
For convenience, we  respectively extend the domains of $p_n$ and $\varrho$ to $[\![1,N]\!]^n \times \mathbb{Z}^n$ and to $[0,1]^n \times \mathbb{R}^n$ by letting $p_n$ and $\varrho_n$ to be zero outside $D^N_n \times \mathbb{Z}^n$ and $\Delta_n \times \mathbb{R}^n$.
\subsubsection{Wiener chaos for Brownian transition probabilites}
\label{subsubsection: Wiener chaos for Brownian transition}
The Brownian transition probabilites can generate many elements in the Fock space $F$ (see Definition \ref{definition: Fock spaces}). Let us recall here Notation \eqref{definition: a otimes} of $a^{\otimes}$.
\begin{proposition}
	\label{propostion: definition of varrho(a) }
	For every measurable bounded function $a \in L^{\infty}([0,1]\times \mathbb{R})$, let  $\boldsymbol{\varrho}( a ) :=  (1, a \varrho_1,a^{\otimes 2} \varrho_2,a^{\otimes 3} \varrho_3,\dots ). $ be a weighted ordered collection (indexed by $\mathbb{N}_0$) of Brownian transition probabilites $\varrho_n$ that depends on $a$. \\
	Then, $\boldsymbol{\varrho}(a)$ is an element in the Fock space $F$,i.e., $\sum_{n \ge 0} \| a^{\otimes n}\varrho_n\|^2_{2} < \infty.$
\end{proposition}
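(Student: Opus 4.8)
The plan is to reduce the claim to an explicit computation of $\sum_{n\ge 0}\|a^{\otimes n}\varrho_n\|_2^2$ and bound it by a convergent series. Since $\|a\|_\infty =: c < \infty$, we have the pointwise bound $|a^{\otimes n}(\mathbf{t},\mathbf{x})\varrho_n(\mathbf{t},\mathbf{x})| \le c^n \varrho_n(\mathbf{t},\mathbf{x})$, so it suffices to show $\sum_{n\ge 0} c^{2n}\|\varrho_n\|_{L^2(\Delta_n\times\mathbb{R}^n)}^2 < \infty$; recall that $\varrho_n$ was extended by zero outside $\Delta_n\times\mathbb{R}^n$, so the $L^2([0,1]^n\times\mathbb{R}^n)$ norm equals the norm over the simplex. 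The first step is therefore to compute, or at least estimate sharply, the quantity
\begin{equation*}
	\|\varrho_n\|_2^2 = \int_{\Delta_n}\int_{\mathbb{R}^n} \prod_{j=1}^n \varrho(\mathbf{t}_j-\mathbf{t}_{j-1},\mathbf{x}_j-\mathbf{x}_{j-1})^2 \, d\mathbf{x}\, d\mathbf{t}.
\end{equation*}

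The key observation is the semigroup-type identity for squared heat kernels: $\int_{\mathbb{R}} \varrho(s,y)^2\, dy = \varrho(2s,0) = \frac{1}{\sqrt{4\pi s}}$. Integrating out $\mathbf{x}_n, \mathbf{x}_{n-1}, \dots, \mathbf{x}_1$ successively (each $\mathbf{x}_j$ appears in exactly the two consecutive factors indexed $j$ and $j+1$, and after integrating $\mathbf{x}_n$ one is left with a pure Gaussian integral in $\mathbf{x}_{n-1}$ of the same form, etc. — more carefully, $\int \varrho(s,y-u)^2\,dy$ still equals $\varrho(2s,0)$ independent of $u$, so the spatial integrals decouple completely), one gets
\begin{equation*}
	\|\varrho_n\|_2^2 = \int_{\Delta_n} \prod_{j=1}^n \frac{1}{\sqrt{4\pi(\mathbf{t}_j-\mathbf{t}_{j-1})}}\, d\mathbf{t} = (4\pi)^{-n/2}\int_{\Delta_n} \prod_{j=1}^n (\mathbf{t}_j-\mathbf{t}_{j-1})^{-1/2}\, d\mathbf{t}.
\end{equation*}
The remaining integral over the simplex is a Dirichlet-type integral: substituting $u_j = \mathbf{t}_j - \mathbf{t}_{j-1}$ with $\sum u_j \le 1$ and using the standard formula $\int_{\{u_j>0,\ \sum u_j \le 1\}}\prod u_j^{\alpha_j-1}\,du = \frac{\prod\Gamma(\alpha_j)}{\Gamma(1+\sum\alpha_j)}$ with all $\alpha_j = 1/2$, this evaluates to $\frac{\Gamma(1/2)^n}{\Gamma(1+n/2)} = \frac{\pi^{n/2}}{\Gamma(1+n/2)}$. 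Hence $\|\varrho_n\|_2^2 = (4\pi)^{-n/2}\pi^{n/2}/\Gamma(1+n/2) = 4^{-n/2}/\Gamma(1+n/2) = 2^{-n}/\Gamma(1+n/2)$.

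It then remains to conclude: $\sum_{n\ge 0}\|a^{\otimes n}\varrho_n\|_2^2 \le \sum_{n\ge 0} c^{2n}\|\varrho_n\|_2^2 = \sum_{n\ge 0} \frac{(c^2/2)^n}{\Gamma(1+n/2)}$, and since $\Gamma(1+n/2)$ grows super-exponentially (faster than $n!^{1/2}$ up to constants, via Stirling), this series converges for every $c$. Therefore $\boldsymbol{\varrho}(a) \in F$, which is the assertion. The main obstacle — really the only non-routine point — is justifying cleanly that the iterated spatial integrations genuinely decouple the Gaussian factors: one must check that $\int_{\mathbb{R}}\varrho(s,y-u)^2\,dy$ is indeed independent of the shift $u$ (it is, by translation invariance of Lebesgue measure) so that after integrating the last variable $\mathbf{x}_n$ one recovers an expression of exactly the same product form in $\mathbf{x}_1,\dots,\mathbf{x}_{n-1}$; an induction on $n$ makes this rigorous. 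Everything else is the Dirichlet integral computation and a Stirling-type growth estimate. If one prefers to avoid the exact Dirichlet evaluation, a cruder bound suffices: crudely bounding $\int_{\Delta_n}\prod(\mathbf{t}_j-\mathbf{t}_{j-1})^{-1/2}\,d\mathbf{t}$ by iterating $\int_0^1 s^{-1/2}\,ds = 2$ over a suitable ordering still gives a bound like $C^n/\sqrt{n!}$, which is summable against $c^{2n}$ — this would let us skip the Gamma-function identity entirely at the cost of a non-sharp constant.
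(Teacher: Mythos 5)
Your proposal is correct and follows essentially the same route as the paper's proof: bound $\|a^{\otimes n}\varrho_n\|_2^2$ by $c^{2n}\|\varrho_n\|_2^2$, integrate out the spatial variables to reduce to a Dirichlet integral over the simplex yielding $\|\varrho_n\|_2^2 = 2^{-n}/\Gamma(1+n/2)$, and conclude by Stirling-type growth of the Gamma function. The only cosmetic difference is that you integrate the squared kernels successively via $\int_{\mathbb{R}}\varrho(s,y)^2\,dy = \varrho(2s,0)$, whereas the paper uses the equivalent identity $\varrho_n(\mathbf{t},\mathbf{x})^2 = \varrho_n(\mathbf{t},\mathbf{x}\sqrt{2})\prod_j (2\pi(\mathbf{t}_j-\mathbf{t}_{j-1}))^{-1/2}$ before integrating.
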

\begin{remark}
	In particular, if $a$ is a constant function, that is, $a$ is equal to some constant $\beta$ then $\boldsymbol{\varrho}(\beta)=  (1, \beta \varrho_1,\beta^2 \varrho_2,\beta^3 \varrho_3,\dots ) $.
\end{remark}

\begin{proof}[Proof of Proposition \ref{propostion: definition of varrho(a) }]
	Recall that $a$ is a bounded function, then there is a positive number $\beta$ such that $\|a\|_{\infty} \le \beta$. Hence, $\sum_{n \ge 0} \| a^{\otimes n}\varrho_n\|^2_{ 2} \le \sum_{n \ge 0} \beta^{2n} \| \varrho_n\|^2_{ 2} .$ Thus, it suffices to prove that $\boldsymbol{\varrho}(\beta)$ belongs to the Fock space.	Indeed, observe that when $\mathbf{t} \in \Delta_n$,
	$$\varrho_n( \mathbf{t},\mathbf{x})^2 = \varrho_n( \mathbf{t}, \mathbf{x}\sqrt{2})  \prod_{j=1}^{n} \frac{1}{\sqrt{2\pi(\mathbf{t}_j-\mathbf{t}_{j-1})}}.$$
	Hence,
\begin{align*}
	\int_{[0,1]^n}\int_{\mathbb{R}^n} \varrho_n( \mathbf{t},\mathbf{x})^2  d\mathbf{x} d\mathbf{t}
	&=	\int_{\Delta_n}\int_{\mathbb{R}^n} \varrho_n( \mathbf{t},\mathbf{x})^2  d\mathbf{x} d\mathbf{t}
	\\&=(4\pi)^{-n/2}\int_{\Delta_n} \prod_{j=1}^n \frac{1}{\sqrt{\mathbf{t}_j-\mathbf{t}_{j-1}}} d\mathbf{t}
	\\
	&= (4\pi)^{-n/2} B\left( \frac{1}{2}, \frac{1}{2},\cdots,\frac{1}{2},1\right)=\frac{1}{2^n \Gamma( (n/2)+1) }.
\end{align*}
where $B$ is the Beta function and $\Gamma$ is the Gamma function.\\
The second to last equality comes from recognizing that the integrand is the density of the Dirichlet distribution, for which the beta function $B$ is the normalizing constant.\\
Besides, Gamma function converges extremely rapid to infinity, faster than any exponential functions \cite[Sterling's formula, 6.1.37]{Handbook1972}. Hence, the decay of the above expression shows that $\| \boldsymbol{\varrho}(\beta)\|_F^2 < \infty $ for all $\beta \in \mathbb{R}_+$.
\end{proof}
So naturally, we have the following corollary on the well-posedness of $\mathcal{Z}_a$.
\begin{proposition}
		\label{corollary: welldefined of Z, part2}
		For all measurable bounded function $a$ on $[0,1]\times \mathbb{R}$, the Wiener chaos $\mathcal{Z}_{a}$ is well-defined and has the representation $\mathcal{Z}_{a}= \mathbf{I}( \boldsymbol{\varrho}(a))$.
\end{proposition}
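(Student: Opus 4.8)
The plan is to unwind the definitions: to recognise the series defining $\mathcal{Z}_a$ in \eqref{equation: Z_a} as the image under the map $\mathbf{I}$ of the Fock-space element $\boldsymbol{\varrho}(a)$, and then to read off both the well-posedness of $\mathcal{Z}_a$ and the claimed identity from Proposition \ref{propostion: definition of varrho(a) } together with the contraction and orthogonality properties of the multiple stochastic integrals recorded in Section \ref{subsection: Wiener chaos}. Since $\boldsymbol{\varrho}(a)$ need not be symmetric, one first notes that $\mathbf{I}$ extends harmlessly from $F_{\mathrm{sym}}$ to all of $F$ by $\mathbf{I}(\mathbf{g}) := \sum_{n\ge 0} I_n(g_n)$: indeed $I_n(g) = I_n(\mathrm{Sym}\,g)$ (as used in the proof of Theorem \ref{theorem: Isometry and Stochastic Integration}), so this agrees with $\mathbf{I}(\mathrm{Sym}\,\mathbf{g})$ and the series converges in $L^2(\mathbf{P})$ whenever $\mathbf{g}\in F$.

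\emph{Identifying the $n$-th summand.} Fix $n\ge 1$. Recall from Section \ref{subsection: Brownian motion and simple random walks} that $\mathbf{x}_0 = 0$, that $\mathbf{t}_0 = 0$ is built into the definition \eqref{equation: dimensional simplex} of $\Delta_n$, and that $\varrho_n$ is extended by $0$ outside $\Delta_n\times\mathbb{R}^n$. On $\Delta_n\times\mathbb{R}^n$ the integrand of the $n$-th term of \eqref{equation: Z_a} is exactly $a^{\otimes n}(\mathbf{t},\mathbf{x})\,\varrho_n(\mathbf{t},\mathbf{x})$, with $a^{\otimes n}$ as in \eqref{definition: a otimes}; since $\varrho_n$ vanishes off $\Delta_n\times\mathbb{R}^n$, the nested white-noise integral over $\Delta_n$ appearing in \eqref{equation: Z_a} is, in the notation of Section \ref{subsubsection: white noise}, precisely the multiple stochastic integral $I_n(a^{\otimes n}\varrho_n)$. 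It is worth a sentence of care that no combinatorial prefactor $n!$ or $1/n!$ appears: this is because the normalisation of Theorem \ref{theorem: Isometry and Stochastic Integration} makes $I_n$ a contraction (and an isometry on symmetric functions) rather than an $\sqrt{n!}$-isometry, and because the kernel $a^{\otimes n}\varrho_n$ is already supported on the ordered simplex.

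\emph{Convergence and identification.} By Proposition \ref{propostion: definition of varrho(a) }, $\boldsymbol{\varrho}(a) = (1, a\varrho_1, a^{\otimes 2}\varrho_2, \dots)$ belongs to the Fock space $F$, i.e. $\sum_{n\ge 0}\|a^{\otimes n}\varrho_n\|_2^2 < \infty$. Using $\mathbf{E}[I_n(g)^2]\le\|g\|_2^2$ (Theorem \ref{theorem: Isometry and Stochastic Integration}) and the orthogonality in $L^2(\mathbf{P})$ of multiple integrals of distinct orders (recalled after Proposition \ref{proposition: Wiener chaos decomposition}, via $I_n(g) = I_n(\mathrm{Sym}\,g)$), the partial sums $\sum_{n=0}^M I_n(a^{\otimes n}\varrho_n)$ form a Cauchy sequence in $L^2(\mathbf{P})$, since for $M' > M$ one has $\mathbf{E}\big[\big(\sum_{n=M+1}^{M'} I_n(a^{\otimes n}\varrho_n)\big)^2\big] = \sum_{n=M+1}^{M'}\mathbf{E}[I_n(a^{\otimes n}\varrho_n)^2] \le \sum_{n>M}\|a^{\otimes n}\varrho_n\|_2^2 \to 0$. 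Its $L^2$-limit is $\mathcal{F}_W$-measurable (each term is), hence lies in $L^2(\Omega,\mathcal{F}_W,\mathbf{P})$, which is exactly the assertion that $\mathcal{Z}_a$ is well-defined; and by construction this limit is $1 + \sum_{n\ge 1} I_n(a^{\otimes n}\varrho_n) = \sum_{n\ge 0} I_n\big((\boldsymbol{\varrho}(a))_n\big) = \mathbf{I}(\boldsymbol{\varrho}(a))$.

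I expect no genuine obstacle here: the analytic content of the statement is carried entirely by Proposition \ref{propostion: definition of varrho(a) } (the membership $\boldsymbol{\varrho}(a)\in F$, whose proof exploits the super-exponential decay of $1/\Gamma(n/2+1)$). The only delicate point is the notational reconciliation in the first step — confirming that the nested white-noise integral over $\Delta_n$ in \eqref{equation: Z_a} is the multiple Wiener integral $I_n$ of the zero-extended kernel $a^{\otimes n}\varrho_n$, with no stray factorial, and that extending $\mathbf{I}$ from $F_{\mathrm{sym}}$ to $F$ by symmetrisation does not change its value.
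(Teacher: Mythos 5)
Your proof is correct and follows exactly the route the paper intends: the paper states this proposition without proof, as an immediate consequence of Proposition \ref{propostion: definition of varrho(a) } ($\boldsymbol{\varrho}(a)\in F$) together with the isometry/contraction properties of $\mathbf{I}$ and $I_n$ from Section \ref{subsection: Wiener chaos}. Your write-up simply makes explicit the details the paper leaves implicit (identifying the $n$-th summand with $I_n(a^{\otimes n}\varrho_n)$, extending $\mathbf{I}$ to non-symmetric kernels via $I_n(g)=I_n(\mathrm{Sym}\,g)$, and the $L^2$-Cauchy argument for the partial sums).
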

\subsection{Relation between $\mathfrak{Z}$ and $U$-statistics}

 We begin with establishing the relation between partition functions $\mathfrak{Z}$ and $U$-statistics, then we will prove Theorem \ref{theorem: convergence of parition functions}.\\
For convenience, we extend Notation  \ref{notation: (t,x)}  $[t,x]_N$ for a pair $(t,x) \in [0,1]\times \mathbb{R}$ to higher dimensions:
\begin{notation}
	For any pair $(\mathbf{t},\mathbf{x}) \in (0,1]^n \times \mathbb{R}^n$, we let $[ \mathbf{t}, \mathbf{x}]_N$ denote the unique pair $(\mathbf{i},\mathbf{z}) \in [\![1,N]\!]^n \times \mathbb{Z}^n$ such that:
	\begin{itemize}
		\item[i.] $(\mathbf{t},\mathbf{x}) \in \left( \frac{ \mathbf{i}-\mathbf{1}}{N},\frac{\mathbf{i}}{N}\right]\times \left( \frac{\mathbf{z}-\mathbf{1}}{\sqrt{N}},\frac{\mathbf{z}+\mathbf{1}}{\sqrt{N}}\right] $,
		\item[ii.] $\mathbf{i}$ and $\mathbf{z}$ have the same parity.
	\end{itemize}
\end{notation}
\begin{definition}For $n,N \ge 1$, define $p^N_n : [0,1]^n \times \mathbb{R}^n \rightarrow \mathbb{R}$ by
	$$p^N_n( \mathbf{t},\mathbf{x})=2^{-n}p_n (  [\mathbf{t},\mathbf{x}]_N)\mathbb{1}_{\lceil N \mathbf{t} \rceil \in D^N_n},$$
	where $\lceil N \mathbf{t} \rceil $ is the usual ceiling function, that is, for all $\mathbf{x} \in \mathbb{R}^n$ and $\mathbf{z} \in \mathbb{Z}^n$,  $\lceil \mathbf{x} \rceil =\mathbf{z}$ if and only if for all $i$, $\mathbf{z}_i$ is the smallest integer bigger than or equal to $\mathbf{x}_i$ . \label{definition: p^N_n}
\end{definition}
	We observe that the condition $\lceil N \mathbf{t} \rceil \in D^N_n$ implies that $p^N_n$ is identically zero if $n>N$. Besides, we also see that $p^N_n$ is constant on each rectangle in $\mathcal{R}^N_n$, so the average $\overline{p^N_n}=p^N_n$ and in particular, for $\mathbf{i} \in E^N_n, \mathbf{z} \in \mathbb{Z}^n$ such that $\mathbf{i} \leftrightarrow \mathbf{z}$, we have:
	$$p^N_n\left( \frac{\mathbf{i}}{N} ,\frac{\mathbf{z}}{\sqrt{N}}\right)\mathbf{1}_{ \mathbf{i}  \in D^N_n}= 2^{-n}p_{n}( \mathbf{i}, \mathbf{z})\mathbf{1}_{\mathbf{i}\in D^N_n} .$$
	Thus, by definition of $\mathcal{S}^N_n$(see Definition \ref{definition: weighted U-statistics}) ,
	$$\mathcal{S}^N_n( p^N_n)= 2^{-n/2}\sum_{ \mathbf{i} \in D^N_n} \sum_{ \mathbf{z} \in \mathbb{Z}^n} p_n( \mathbf{i},\mathbf{z}) \omega(\mathbf{i},\mathbf{z})A_N( \mathbf{i},\mathbf{z}).$$
	Note that the condition $\mathbf{i} \leftrightarrow \mathbf{z}$ is already handled by $p_n$. 
This leads to the following relation:
\begin{proposition}
	\label{proposition: relation between partition functions and U-statistics}
	For all number real $\beta \in \mathbb{R}$ and positive integer $N \in \mathbb{N}$, the partition functions $\mathfrak{Z}_N$ can rewritten as:
		$$\mathfrak{Z}_N( \beta A_N )= \sum_{n=0}^N 2^{n/2}\beta^n\mathcal{S}^N_n( p^N_n).$$
\end{proposition}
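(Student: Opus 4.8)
The plan is to expand the product in the definition of $\mathfrak{Z}_N(\beta A_N)$ term by term and match each resulting term with a $U$-statistic of the appropriate order. First I would expand
$\prod_{n=1}^N\bigl(1+\beta A_N(n,S^{(1)}_n)\omega(n,S^{(1)}_n)\bigr)$
by choosing, for each $n\in[\![1,N]\!]$, either the summand $1$ or the summand $\beta A_N(n,S^{(1)}_n)\omega(n,S^{(1)}_n)$. The set of indices at which the second option is chosen is an arbitrary subset of $[\![1,N]\!]$, which I parametrize by its size $n$ and its increasing enumeration $\mathbf{i}=(\mathbf{i}_1<\cdots<\mathbf{i}_n)\in D^N_n$. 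This yields the finite expansion
$$\prod_{n=1}^N\bigl(1+\beta A_N(n,S^{(1)}_n)\omega(n,S^{(1)}_n)\bigr)=\sum_{n=0}^N\beta^n\sum_{\mathbf{i}\in D^N_n}\prod_{j=1}^n A_N(\mathbf{i}_j,S^{(1)}_{\mathbf{i}_j})\,\omega(\mathbf{i}_j,S^{(1)}_{\mathbf{i}_j}),$$
where the $n=0$ term is the constant $1$ (empty product).

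Next I would apply the conditional expectation $\mathbf{E}[\,\cdot\mid\omega]$. Since the sum is finite, this commutes with the summations over $n$ and over $\mathbf{i}$, and conditionally on $\omega$ the only remaining randomness is the walk $S^{(1)}$. Decomposing each conditional expectation according to the value $\mathbf{z}\in\mathbb{Z}^n$ of the vector $(S^{(1)}_{\mathbf{i}_1},\dots,S^{(1)}_{\mathbf{i}_n})$ and using that $\mathbf{P}(S^{(1)}_{\mathbf{i}_1}=\mathbf{z}_1,\dots,S^{(1)}_{\mathbf{i}_n}=\mathbf{z}_n)=p_n(\mathbf{i},\mathbf{z})$, I obtain
$$\mathfrak{Z}_N(\beta A_N)=\sum_{n=0}^N\beta^n\sum_{\mathbf{i}\in D^N_n}\sum_{\mathbf{z}\in\mathbb{Z}^n}p_n(\mathbf{i},\mathbf{z})\,A_N(\mathbf{i},\mathbf{z})\,\omega(\mathbf{i},\mathbf{z}).$$
Here the parity constraint $\mathbf{i}\leftrightarrow\mathbf{z}$ needs no separate mention, since $p(i,z)=0$ unless $i$ and $z$ have the same parity, and $p_n(\mathbf{i},\mathbf{z})=0$ outside $D^N_n\times\mathbb{Z}^n$ by our extension convention.

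Finally I would invoke the identity established just above the statement, namely
$\mathcal{S}^N_n(p^N_n)=2^{-n/2}\sum_{\mathbf{i}\in D^N_n}\sum_{\mathbf{z}\in\mathbb{Z}^n}p_n(\mathbf{i},\mathbf{z})\,\omega(\mathbf{i},\mathbf{z})\,A_N(\mathbf{i},\mathbf{z})$,
so that for each $n\ge 1$ the inner double sum in the previous display equals $2^{n/2}\mathcal{S}^N_n(p^N_n)$; for $n=0$ one checks directly that $2^{0}\beta^{0}\mathcal{S}^N_0(p^N_0)=1$ matches the constant term (recall $p^N_0\equiv 1$ and $\mathcal{S}^N_0$ is the identity on constants, consistent with $I_0$). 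Substituting yields $\mathfrak{Z}_N(\beta A_N)=\sum_{n=0}^N 2^{n/2}\beta^n\mathcal{S}^N_n(p^N_n)$, as claimed. The argument is essentially bookkeeping, and the only delicate point is keeping track of which terms vanish, i.e., the restriction to the integer simplex $D^N_n$ and the parity matching $\mathbf{i}\leftrightarrow\mathbf{z}$ — precisely the constraints that the indicator factors in the definition of $p^N_n$ were designed to absorb — so that is the step where I would be most careful to ensure nothing is double-counted or omitted.
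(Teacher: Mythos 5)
Your proposal is correct and follows essentially the same route as the paper: expand the product over subsets of $[\![1,N]\!]$ indexed by the integer simplex $D^N_n$, take the conditional expectation given $\omega$ to produce the transition probabilities $p_n(\mathbf{i},\mathbf{z})$, and then recognize the resulting sums as $2^{n/2}\mathcal{S}^N_n(p^N_n)$ via the identity stated just before the proposition. Your remarks on the parity constraint being absorbed by $p_n$ and on the $n=0$ term match the paper's own bookkeeping.
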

\begin{remark}
	So in particular, $$\mathfrak{Z}_N( N^{-1/4}A_N  )= \sum_{n=0}^N  2^{n/2} N^{-3n/4}\mathcal{S}^N_n( N^{n/2}p^N_n).$$ This equality is useful for our proof of Theorem \ref{theorem: convergence of parition functions}.
\end{remark}
\begin{proof}[Proof of Proposition \ref{proposition: relation between partition functions and U-statistics}]
By definition,
\begin{align}
	&\mathfrak{Z}_N(\beta A_N)
	= \mathbf{E} \left[ \prod_{n=1}^N \big(1+\beta A_N(n,S_n)\omega(n,S_n)\big) \bigg| \omega \right] \nonumber
	\\
	&= \mathbf{E}\left[1+ \sum_{n=1}^N \sum_{\mathbf{i} \in D^N_n} \beta^n\bigg(\prod_{j=1}^n A_N(\mathbf{i}_j,S_{\mathbf{i}_j}) \bigg) \bigg(\prod_{j=1}^n \omega (\mathbf{i}_j,S_{\mathbf{i}_j}) \bigg)  \bigg| \omega \right] \nonumber
	\\
	&= \mathbf{E}\left[1+ \sum_{n=1}^N \sum_{\mathbf{i} \in D^N_n} \sum_{\mathbf{z} \in \mathbb{Z}^n}\beta^n\bigg( \prod_{j=1}^n \mathbb{1}_{ S_{\mathbf{i}}=\mathbf{z}_j}\bigg) \bigg(\prod_{j=1}^n A_N(\mathbf{i}_j,\mathbf{z}_j)\bigg) \bigg(\prod_{j=1}^n \omega (\mathbf{i}_j,\mathbf{z}_j) \bigg) \bigg| \omega \right] \nonumber
	\\
	&=1+\sum_{n=1}^N \sum_{\mathbf{i} \in D^N_n} \sum_{\mathbf{z} \in \mathbb{Z}^n} \beta^n\mathbf{E}\bigg[ \prod_{j=1}^n \mathbb{1}_{ S_{\mathbf{i}}=\mathbf{z}_j}\bigg] A_N(\mathbf{i},\mathbf{z}) \omega( \mathbf{i},\mathbf{z}) \nonumber
	\\
	&= 1+\sum_{n=1}^N \sum_{\mathbf{i} \in D^N_n} \sum_{\mathbf{z} \in \mathbb{Z}^n} \beta^np_n(\mathbf{i},\mathbf{z})A_N( \mathbf{i},\mathbf{z}) \omega(\mathbf{i},\mathbf{z})=1+\sum_{n=1}^N \beta^n2^{n/2} \mathcal{S}^N_n(p^N_n). \label{equation: expansion of Z_N}
\end{align}
Thus,
$\mathfrak{Z}_N(\beta A_N)=1+\sum_{n=1}^N 2^{n/2} \beta^n\mathcal{S}^N_n(p^N_n).$
\end{proof}

Now, we are ready to give  a proof of Theorem \ref{theorem: convergence of parition functions}.
\begin{proof}[Proof of Theorem \ref{theorem: convergence of parition functions}]
	First observe that Theorem \ref{theorem: discrete Chaos to Wiener chaos} and Proposition \ref{propostion: definition of varrho(a) } imply that :
	$$\sum_{n=0}^{\infty}   N^{-3n/4}S^N_n( 2^{n/2} \varrho_n) \xrightarrow[]{(d)} \sum_{n=0}^{\infty}  \tilde{I}_n( \varrho_n 2^{n/2}) =\mathbf{I}( \boldsymbol{\varrho}(\sqrt{2}a))= \mathcal{Z}_{\sqrt{2}a}. $$
	as $N$ converges to infinity.
	Now we show that the difference between this term and $\mathfrak{Z}_N( N^{-1/4}A_N))$ goes to zero as $N$ converges to infinity. Observe that:
	\begin{align*}
		&\sum_{n=0}^{\infty}  N^{-3n/4}S^N_n( 2^{n/2} \varrho_n) - \mathfrak{Z}_N( N^{-1/4}A_N\omega)\\
		&\qquad =\sum_{n=0}^{N}   2^{n/2}N^{-3n/4}S^N_n(  \varrho_n- N^{n/2}p^N_n) + \sum_{n = N+1}^{\infty} N^{-3n/4}S^N_n( 2^{n/2} \varrho_n). 
	\end{align*}
	By Proposition \ref{theorem: properties of U-statistics}, the second term is bounded in $L^2$ by the square root of
	$$\sum_{n=N+1}^{\infty} 2^n c^{2n}\| \varrho_n\|^2_{2}.$$
	which goes to zero as $N \rightarrow \infty$ by Proposition \ref{propostion: definition of varrho(a) }.
	\newline
	For the first term, using again Proposition \ref{theorem: properties of U-statistics}, we note that its $L^2$-norm is bounded above by the square root of
	$$\sum_{n=0}^{N} 2^n c^{2n}\| \varrho_n- N^{n/2}p^N_n\|^2_{2}.$$
	From Lemma \ref{lemma: convergence and boundedness of p^N_n} below, we know that there is a constant $C>0$ such that for all $n \in \mathbb{N}$,
	$$\sup_N \| N^{n/2}p^N_n \|_{2} \le C^n \| \varrho_n\|_{2} \quad \text{ and } \quad \lim_{N \rightarrow \infty} \| \varrho_k- N^{n/2}p^N_n\|^2_{2} = 0.$$
	 Since by Proposition \ref{propostion: definition of varrho(a) },  the sequence $2^{n+1}c^n(1+C^n )\| \varrho_n\|^2_{2}$ is summable, by the dominated convergence theorem, we can easily deduce that:
	\begin{align*}
\lim_{N \rightarrow+\infty} \sum_{n=0}^{N} 2^n c^{2n}\| \varrho_n- N^{n/2}p^N_n\|^2_{2}
 =\sum_{n=0}^{\infty} \lim_{N \rightarrow+\infty}  2^n c^{2n}\| \varrho_n - N^{n/2}p^N_n\|^2_{2} =0.
	\end{align*}
Theorem \ref{theorem: convergence of parition functions} is therefore proved.
\end{proof}

\begin{lemma}
	\label{lemma: convergence and boundedness of p^N_n}
	For all $n$, we have the $L^2$-convergence:
	$$\lim_{N \rightarrow +\infty} \| \varrho_n- N^{n/2}p^N_n\|_{2} =0,$$
	and moreover, there exists a constant $C$ such that for all $n \in \mathbb{N}$,
	$$\sup_N \| N^{n/2}p^N_n \|_{2} \le C^n \| \varrho_n\|_{2}.$$
\end{lemma}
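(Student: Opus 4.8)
The plan is to reduce both claims to the explicit identity
\[
\bigl\| N^{n/2} p^N_n \bigr\|_2^2 \;=\; 2^{-n} N^{-n/2} \sum_{\mathbf{i} \in D^N_n} \prod_{j=1}^n q(\mathbf{i}_j - \mathbf{i}_{j-1}), \qquad q(m) := \sum_{w\in\mathbb{Z}} p(m,w)^2,
\]
together with the fact that $q(m) = \mathbf{P}(S_{2m}=0) = \binom{2m}{m}4^{-m}$. This identity holds because $p^N_n$ is constant on each cell of $\mathcal{R}^N_n$ and vanishes off $\bigcup\mathcal{R}^N_n$: the cell indexed by $(\mathbf{i},\mathbf{z})$ (with $\mathbf{i}\in D^N_n$, $\mathbf{i}\leftrightarrow\mathbf{z}$) carries the value $2^{-n}p_n(\mathbf{i},\mathbf{z})$ and has Lebesgue measure $2^n N^{-3n/2}$; squaring, multiplying by the cell volume, and using $p_n(\mathbf{i},\mathbf{z}) = \prod_j p(\mathbf{i}_j-\mathbf{i}_{j-1}, \mathbf{z}_j - \mathbf{z}_{j-1})$ to sum out $\mathbf{z}$ one coordinate at a time gives the display, where $\sum_{w} p(m,w)^2 = \mathbf{P}(S_m = S'_m) = \mathbf{P}(S_{2m}=0)$ for an independent copy $S'$ (using $S_m - S'_m \stackrel{d}{=} S_{2m}$).

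For the uniform bound, Stirling's formula provides a constant $C_0$ with $q(m)\le C_0 m^{-1/2}$ for all $m\ge 1$. Writing $m_j := \mathbf{i}_j - \mathbf{i}_{j-1}\ge 1$ and using the elementary inequality $m^{-1/2}\le\int_{m-1}^{m}s^{-1/2}\,ds$, one bounds $\prod_j m_j^{-1/2}$ by the integral of $\prod_j s_j^{-1/2}$ over the unit box $\prod_j[m_j-1,m_j]$; distinct $\mathbf{i}\in D^N_n$ give boxes with pairwise disjoint interiors, all contained in $\{s\in\mathbb{R}_{\ge 0}^n:\sum_j s_j\le N\}$, so that
\[
\sum_{\mathbf{i}\in D^N_n}\prod_{j=1}^n m_j^{-1/2}\;\le\;\int_{\{s\ge0,\ \sum_j s_j\le N\}}\prod_{j=1}^n s_j^{-1/2}\,ds\;=\;N^{n/2}\,B\bigl(\tfrac12,\dots,\tfrac12,1\bigr)
\]
after the rescaling $s = Nu$, where the Dirichlet integral $B(\tfrac12,\dots,\tfrac12,1) = \pi^{n/2}/\Gamma(n/2+1) = (4\pi)^{n/2}\|\varrho_n\|_2^2$ is exactly the one evaluated in the proof of Proposition \ref{propostion: definition of varrho(a) }. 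Substituting into the identity above gives $\|N^{n/2}p^N_n\|_2^2\le (C_0\sqrt{\pi})^n\|\varrho_n\|_2^2$, i.e. the assertion with $C := (C_0\sqrt{\pi})^{1/2}$ (the case $N<n$ being trivial since then $p^N_n\equiv 0$).

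For the $L^2$-convergence I would use the standard fact that a sequence converging almost everywhere whose $L^2$-norms converge to that of the limit converges in $L^2$. Almost-everywhere convergence $N^{n/2}p^N_n(\mathbf{t},\mathbf{x})\to\varrho_n(\mathbf{t},\mathbf{x})$ follows factor by factor from the local central limit theorem: if $(\mathbf{i},\mathbf{z}) = [\mathbf{t},\mathbf{x}]_N$ then $\mathbf{i}_j/N\to\mathbf{t}_j$ and $\mathbf{z}_j/\sqrt{N}\to\mathbf{x}_j$, hence $\sqrt{N}\,p(\mathbf{i}_j-\mathbf{i}_{j-1},\mathbf{z}_j-\mathbf{z}_{j-1})\to 2\varrho(\mathbf{t}_j-\mathbf{t}_{j-1},\mathbf{x}_j-\mathbf{x}_{j-1})$ (the factor $2$ coming from the parity constraint), so $N^{n/2}2^{-n}p_n([\mathbf{t},\mathbf{x}]_N)\to\varrho_n(\mathbf{t},\mathbf{x})$, and $\mathbb{1}_{\lceil N\mathbf{t}\rceil\in D^N_n}\to 1$ for $\mathbf{t}$ in the open simplex — enough, since its complement is Lebesgue-null and $\varrho_n$ vanishes on the rest of $[0,1]^n$. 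For the norm convergence, by the identity of the first paragraph it suffices that
\[
N^{-n/2}\sum_{\mathbf{i}\in D^N_n}\prod_{j=1}^n q(\mathbf{i}_j-\mathbf{i}_{j-1})\;\longrightarrow\;\int_{\{u\ge0,\ \sum_j u_j\le 1\}}\prod_{j=1}^n(\pi u_j)^{-1/2}\,du\;=\;\frac{1}{\Gamma(n/2+1)},
\]
since then $\|N^{n/2}p^N_n\|_2^2\to 2^{-n}/\Gamma(n/2+1) = \|\varrho_n\|_2^2$. Writing the left-hand side as $\int f_N(u)\,du$ with $f_N(u) := N^{n/2}\bigl(\prod_j q(\lceil Nu_j\rceil)\bigr)\mathbb{1}_{\{\lceil Nu_j\rceil\ge 1\ \forall j,\ \sum_j\lceil Nu_j\rceil\le N\}}$, one has $f_N(u)\to\prod_j(\pi u_j)^{-1/2}\mathbb{1}_{\{\sum_j u_j\le 1\}}$ a.e. (because $N^{1/2}q(\lceil Nu\rceil)\to(\pi u)^{-1/2}$), while $q(m)\le C_0 m^{-1/2}$ and $\lceil Nu_j\rceil\ge Nu_j$ force $|f_N(u)|\le C_0^n\prod_j u_j^{-1/2}\mathbb{1}_{\{\sum_j u_j\le 1\}}$, an integrable dominant; dominated convergence then yields the limit.

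The main obstacle is precisely this last limit: the integrand $\prod_j u_j^{-1/2}$ is singular on the faces $\{u_j=0\}$, so one is evaluating a Riemann-type sum for an improper integral and must control the contribution of small increments $\mathbf{i}_j-\mathbf{i}_{j-1}=o(N)$ uniformly in $N$ — which is exactly what the bound $q(m)\le C_0 m^{-1/2}$ buys, and also why I route the convergence through the almost-everywhere-plus-norm criterion rather than estimating $\|\varrho_n - N^{n/2}p^N_n\|_2^2$ by a single dominated-convergence argument (that would instead require the sub-Gaussian local bound $p(m,w)\le Cm^{-1/2}e^{-cw^2/m}$ to dominate $(N^{n/2}p^N_n)^2$). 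A minor additional point is that the per-factor local CLT limit is applied along the cells $[\mathbf{t},\mathbf{x}]_N$, so it is convenient to record the local limit theorem with an error uniform over bounded rescaled increments; this is classical.
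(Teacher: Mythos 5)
Your argument is correct, and it reaches the two claims by a genuinely different mechanism than the paper, even though the same three ingredients appear in both (the local CLT for pointwise convergence, a Stirling-type bound of order $m^{-1/2}$, and the Dirichlet integral $\int_{\Delta_n}\prod_j(\mathbf{t}_j-\mathbf{t}_{j-1})^{-1/2}d\mathbf{t}$). The paper never computes $\|N^{n/2}p^N_n\|_2^2$ exactly: it dominates the squared function pointwise, writing $\left(N^{n/2}p^N_n\right)^2\le (C/2)^n h\left(\lceil N\mathbf{t}\rceil/N\right)N^{n/2}p^N_n$ via the sup bound $\sup_z p(i,z)\le Ci^{-1/2}$ (keeping one un-squared factor of $p^N_n$ so that the space variables can be summed out using $\sum_{\mathbf{z}}p_n(\mathbf{i},\mathbf{z})\le 1$), and then invokes a generalized dominated convergence theorem; this forces it to prove separately that the Riemann sums $\int h(\lceil N\mathbf{t}\rceil/N)\,d\mathbf{t}$ converge to $\int h$, which it does through a uniform $L^{3/2}$ bound. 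You instead sum out the space variables \emph{exactly} through the collision identity $\sum_w p(m,w)^2=\mathbf{P}(S_{2m}=0)=\binom{2m}{m}4^{-m}$, obtaining a closed formula for $\|N^{n/2}p^N_n\|_2^2$ as a Riemann sum over the time simplex alone, and then route the $L^2$ convergence through the criterion ``a.e.\ convergence plus convergence of $L^2$ norms implies $L^2$ convergence.'' Your domination of that one-variable-per-coordinate Riemann sum by $C_0^n\prod_j u_j^{-1/2}\mathbb{1}_{\{\sum_j u_j\le1\}}$ (using $\lceil Nu_j\rceil\ge Nu_j$) is clean and replaces the paper's $L^{3/2}$ uniform-integrability step; the unit-box comparison $\prod_j m_j^{-1/2}\le\int_{\prod_j[m_j-1,m_j]}\prod_j s_j^{-1/2}\,ds$ gives the uniform bound with the same Beta-function constant the paper uses. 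The trade-off is that your route leans on the Radon--Riesz-type criterion (which you should cite or prove via Fatou applied to $2(|f_N|^2+|f|^2)-|f_N-f|^2\ge0$), whereas the paper's route leans on the generalized dominated convergence theorem; both are standard, and your exact identity for the $L^2$ norm arguably makes the bookkeeping more transparent.
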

\begin{proof} 	From the local central limit theorem, we deduce that for any fixed $n \in \mathbb{N}$, $N^{n/2}p^{N}_n$ converges almost surely to $\varrho_n$ as $N$ goes to infinity. So by the general Lebesgue dominated convergence theorem \cite[Theorem 19]{Royden2010}, to prove our $L^2$ convergence, it suffices to find a function $g \in L^2([0,1]^n \times \mathbb{R}^n)$ and a  sequence $(g_N, N \in \mathbb{N})$ of functions in $L^2([0,1]^n \times \mathbb{R}^n)$ such that:
	\begin{itemize}
		\item[i.] $\left( N^{n/2}p^{N}_n\right)^2 \le g_N$ for all $N$.
		\item[ii.] $g_N$  converges pointwise to $g$ when $N$ converges to infinity.
		\item[iii.] $\lim_{N \rightarrow \infty} \int_{[0,1]^n\times \mathbb{R}^n} g_N  =  \int_{[0,1]^n\times \mathbb{R}^n} g < \infty$.
	\end{itemize}
	By Definition \eqref{equation: transitions} of $p$ and Sterling's formula (see \cite[Sterling's formula, 6.1.37]{Handbook1972}), we observe that there exists a constant $C$ such that $\sqrt{i}p(i,x) \le C$ for all $i$ and $x$ , therefore:
	$$\sup_{\mathbf{z} \in \mathbb{Z}^n} p_n ( \mathbf{i},\mathbf{z}) \le C^n \prod_{j=1}^n \frac{1}{\sqrt{\mathbf{i}_{j}-\mathbf{i}_{j-1}}}.$$
	From this and by Definition \ref{definition: p^N_n}of $p^N_n$, we have:
	$$\left( N^{n/2}p^{N}_n( \mathbf{t},\mathbf{x}) \right)^2 \le (C/2)^n h\left(  \frac{\lceil N\mathbf{t} \rceil}{N} \right) N^{n/2} p^{N}_n(\mathbf{t},\mathbf{x} ).$$
	where $h(\mathbf{t}) = \prod_{j=1}^n \frac{1}{\sqrt{\mathbf{t}_{j}-\mathbf{t}_{j-1}}} \mathbb{1}_{ \{\mathbf{t} \in \Delta_n\}} $.\\
	Let us choose for all $N$ the function $$g_N(\mathbf{t}, \mathbf{x}):= (C/2)^n h\left(  \frac{\lceil N\mathbf{t} \rceil}{N} \right) N^{n/2} p^{N}_n(\mathbf{t},\mathbf{x} ),$$ 
	and let$$g(\mathbf{t}, \mathbf{x}):= (C/2)^n h( \mathbf{t}) \varrho_n(\mathbf{t}, \mathbf{x}).$$ Clearly, the conditions i. and ii. for the generalized dominated convergence Theorem are sastified. For the last condition, we first notice that:
	$$\int_{[0,1]^n \times \mathbb{R}^n} g( \mathbf{t},\mathbf{x})d\mathbf{t}d\mathbf{x} = (C/2)^n \int_{[0,1]^n} h( \mathbf{t}) d\mathbf{t}.$$
	Then by definition of $p^N_n$, we have the following equalities:
	\begin{align*}
		&\int_{[0,1]^n \times \mathbb{R}^n} g_N( \mathbf{t},\mathbf{x})d\mathbf{t}d\mathbf{x}
		\\
		&=\sum_{ \substack{\mathbf{i} \in [\![ 1,N]\!]^n, \mathbf{z} \in \mathbb{Z}^n: \\ \mathbf{i} \text{ and } \mathbf{z} \text{ have the same parity}}}  \int_{ \left( \frac{ \mathbf{i}-\mathbf{1}}{N},\frac{\mathbf{i}}{N}\right]\times \left( \frac{\mathbf{z}-\mathbf{1}}{\sqrt{N}},\frac{\mathbf{z}+\mathbf{1}}{\sqrt{N}}\right]  } g_N\left(  \mathbf{t}, \mathbf{x}\right) d\mathbf{t} d\mathbf{x}
		\\
		&=\sum_{ \substack{\mathbf{i} \in [\![ 1,N]\!]^n, \mathbf{z} \in \mathbb{Z}^n: \\ \mathbf{i} \text{ and } \mathbf{z} \text{ have the same parity}}} \left( N^{-3n/2}2^n \right) \left[ (C/2)^n h\left( \frac{\mathbf{i}}{N}\right)2^{-n}N^{n/2}p_n(\mathbf{i},\mathbf{z}) \mathbb{1}_{ \mathbf{i} \in D^N_n} \right]
		\\
		&=(C/2)^nN^{-n} \sum_{ \mathbf{i} \in D^N_n} \sum_{\mathbf{z} \in \mathbf{Z}^n} h\left( \frac{\mathbf{i}}{N} \right)p_n( \mathbf{i},\mathbf{z}) \mathbb{1}_{\{\mathbf{i} \text{ and } \mathbf{z} \text{ have the same parity} \}}\\
		&=(C/2)^nN^{-n} \sum_{ \mathbf{i} \in D^N_n}  h\left( \frac{\mathbf{i}}{N} \right) = (C/2)^n  \int_{[0,1]^n}  h\left(  \frac{\lceil N\mathbf{t} \rceil}{N} \right) d\mathbf{t}.
	\end{align*} 

	So, what is left to do is prove that $$\lim_{N\rightarrow \infty}  \int_{[0,1]^n}  h\left(  \frac{\lceil N\mathbf{t} \rceil}{N} \right) d\mathbf{t} =  \int_{[0,1]^n}  h\left(  \mathbf{t} \right) d\mathbf{t} \quad \text{and} \quad \int_{[0,1]^n}  h\left(  \mathbf{t} \right) d\mathbf{t} < \infty.$$
	which is true because $ h\left(  \frac{\lceil N\mathbf{t} \rceil}{N} \right)$ converges pointwise to $h(\mathbf{t})$ for all $\mathbf{t}$ and they form a uniformly integrable sequence of functions in $L^2([0,1]\times \mathbb{R})$. Indeed, the uniform integrability is due to the fact that:
	\begin{align}
		&\int_{\Delta_n} \left[h \left(\frac{\lceil N\mathbf{t} \rceil}{N} \right) \right]^{3/2}d\mathbf{t}= N^{-n}\sum_{ \mathbf{i} \in D^N_n}  \prod_{j=1}^n \left( \frac{\mathbf{i}_{j}}{N}- \frac{\mathbf{i}_{j-1}}{N} \right)^{-3/4} \nonumber
		\\
		&=
		 \sum_{ \mathbf{i} \in D^N_n}   2^{3n/4}\prod_{j=1}^n \left( \frac{2\mathbf{i}_{j}-2\mathbf{i}_{j-1}}{N}\right)^{-3/4} \nonumber
		\\
		& \le \sum_{ \mathbf{i} \in D^N_n}  \int_{ [0,1)^n} 2^{3n/4}\prod_{j=1}^n \left( \frac{\mathbf{i}_{j}-\mathbf{s}_j}{N}- \frac{\mathbf{i}_{j-1} -\mathbf{s}_{j-1}}{N} \right)^{-3/4}d\mathbf{s}  \label{equation: star inequality}
		\\
		&\le  2^{3n/4} \int_{\Delta_n} \prod_{j=1}^n ( \mathbf{t}_j - \mathbf{t}_{j-1})^{-3/4}d\mathbf{t} <\infty. \nonumber
	\end{align}
	Note that in the inequality $\eqref{equation: star inequality}$, we have used the fact that for all positive integer $m\ge 1$ and real numbers $a,b \in [0,1)$: $2m \ge m+ a-b>0$.\\
	For the inequality in the latter part of our lemma, by what we have proved so far, we observe that:
	\begin{align*}
			&\| N^{n/2}p^N_n \|^2_{2} \le (C/2)^n \int_{ \Delta_n} h \left( \frac{\lceil N \mathbf{t} \rceil }{N}\right) d\mathbf{t} \\
			&\le  (C/2)^n 2^{n/2} \int_{\Delta_n} \prod_{j=1}^n ( \mathbf{t}_j-\mathbf{t}_{j-1})^{-1/2}d\mathbf{t}
			\\
			&= C^n2^{-n/2} \int_{\Delta_n} \int_{\mathbb{R}^n} (4\pi)^{n/2} [\varrho_n( \mathbf{t}, \mathbf{x})]^2d\mathbf{t} d\mathbf{x}= C^n(2\pi)^{n/2} \| \varrho_n\|^2_{2},
	\end{align*}
where the second inequality is obtained similarly as \eqref{equation: star inequality}.\\
Hence, we have our desired conclusion.

\end{proof}

\section{Asymptotics of collision measures}\label{section: Limit theorems for collision events}

\subsection{Convergence of  exponential moments}
We first prove Theorem \ref{theorem:uniform integrability of Z_k} on the uniform boundedness of moments of the partition functions. Then we will study the convergence of the exponential moments of $(\frac{1}{\sqrt{N}}\Pi_N;N \in \mathbb{N})$. 
\begin{proof}[Proof of Theorem \ref{theorem:uniform integrability of Z_k}]
	
Let $c$ be a positive number such that $ c \ge \sup_N \| A_N \|_{\infty}$.
Without loss of generality, assume $N$ is sufficiently large (i.e. $N >c^4$) such that the partition function $\mathfrak{Z}_N\left( \frac{1}{N^{1/4}} A_N  \right)$ is a positive random variable.\\
Recall that in page \pageref{equation: heuristic calculation}, we have shown that:
	\begin{align*}
	&\mathbf{E}\left[ \mathfrak{Z}_N\left( \frac{1}{N^{1/4}} A_N  \right)^k \right] =
	\\
	 &	  \mathbf{E}\left[   \prod_{n=1}^N\mathbf{E} \left[  \prod_{i=1}^k \left(1+ \frac{1}{N^{1/4}} A_N (n,S^{(i)}_n)\omega(n,S^{(i)}_n)\right) \bigg| S^{(1)}, S^{(2)},\dots, S^{(k)}\right] \right].
	\end{align*}
\\
Now, define for $n \ge 1$:
\begin{equation}
	\label{equation: definition X_{N,n}}
	X_{N,n}:= \mathbf{E}\bigg[  \prod_{i=1}^k \bigg(1+\frac{1}{N^{1/4}}A_N(n,S^{(i)}_n)\omega(n,S^{(i)}_n) \bigg) \bigg| S^{(1)}, S^{(2)},\dots, S^{(k)} \bigg] -1,
\end{equation}
and \begin{equation}
	\label{equation: definiton T_N}
	T_N := \sum_{n = 1}^N X_{N,n}.
\end{equation}
Because  $\omega$ is a collection of independent Rademacher random variables, we easily notice that $X_{N,n} \ge 0 \quad \mathbf{P}-a.s$, since  $X_{N,n}$ can be represented by:
\begin{equation}
	\label{equation: expansion of X_{N,n}}
	X_{N,n}=\sum_{l=2}^k \sum_{1 \le i_1< i_2<...<i_l \le k}  N^{-l/4}  \prod_{h=1}^lA_N(n,S^{(i_h)}_n)  \underbrace{\mathbf{E} \bigg[ \prod_{h=1}^l \omega(n,S^{(i_h)}_n) \bigg]  }_{\text{is either 0 or 1}} .
\end{equation}
Consequently,
\begin{equation}
	\label{equation: X_{N,N} T_{N}}
	\mathbf{E}\left[ \mathfrak{Z}_N\left( \frac{1}{N^{1/4}} A_N  \right)^k \right] = \mathbf{E}\textsl{}\left[ \prod_{n=1}^N (1+X_{N,n}) \right] \le \mathbf{E}\left[ e^{T_N}\right] .
\end{equation}
where we have used the classical inequality that $\forall x \in \mathbb{R} : 1+x \le e^x $ and $X_{N,n}\ge 0$.

Then, for each $n$, let us introduce  the number $U^{(n)}$ of pairs $(i,j)$ such that $S^{(i)}_n=S^{(j)}_n$, i.e.,
$$U^{(n)}:= \sum_{ 1 \le i <j \le k } \mathbf{1}_{ S^{(i)}_n=S^{(j)}_n}.$$
We observe that on the event $\{U^{(n)}=0\}$, $X_{N,n}$ is equal to zero, and on the event $\{U^{(n)} \ge 1\}$,
\begin{equation}
	\label{equation: U^{(n)}}
	\sum_{1 \le i_1< i_2<...<i_l \le k} \mathbf{E} \bigg[ \omega(n,S^{(i_1)}_n)...\omega(n,S^{(i_l)}_n) \bigg]  \le  \binom{k}{l} \le \binom{k}{l} U^{(n)}.
\end{equation}
Thus, 
\begin{equation}
	\label{equation: T_N,n}
	X_{N,n} \le \sum_{l=2}^k N^{-l/4} c^l\binom{k}{l} U^{(n)} \le (c+1)^k N^{-1/2} U^{(n)}.
\end{equation}
So by combining the inequalities \eqref{equation: X_{N,N} T_{N}} and \eqref{equation: T_N,n}, one sees that:
\begin{align*}
	&\mathbf{E}\left[ \mathfrak{Z}_N\left( \frac{1}{N^{1/4}} A_N \omega \right)^k \right] \le \mathbf{E}\left[e^{T_N}\right]  \\
	\le& \mathbf{E}\bigg[  \exp \bigg(  (c+1)^kN^{-1/2}\sum_{ 1 \le i <j \le k } \sum_{n=1}^N \mathbf{1}_{ S^{(i)}_n=S^{(j)}_n} \bigg)\bigg]
	\\
	=&\mathbf{E}\bigg[  \prod_{ (i,j):1 \le i <j \le k }\exp \bigg(  (c+1)^kN^{-1/2} \sum_{n=1}^N \mathbf{1}_{ S^{(i)}_n=S^{(j)}_n} \bigg)\bigg]
	\\
	\le &\mathbf{E}\bigg[ \exp \bigg( \frac{k(k-1)}{2}(c+1)^kN^{-1/2} \sum_{n=1}^N \mathbf{1}_{ S^{(1)}_n=S^{(2)}_n} \bigg)\bigg],
\end{align*} 
by Hölder's inequality. Besides, using Theorem \ref{theorem: boundedness of exponential} in Appendices, we can prove that for all $\beta \ge 0$:
$$\sup_N \mathbf{E}\bigg[  \exp \bigg(  \beta N^{-1/2} \sum_{n=1}^N \mathbf{1}_{ S^{(1)}_n=S^{(2)}_n} \bigg)\bigg] <+\infty. $$
Thus, we imply the desired conclusion.
\end{proof}
\begin{remark}\label{remark: uniform integrability of exp(T_N)} Using the same argument as in the above proof, one  can see that:
	$$\sup_N \mathbf{E}\left( e^{\beta T_N}\right) <\infty \qquad \forall \beta \ge 0.$$
	Hence, in particular, $\left(e^{\beta T_N},  N \in \mathbb{N}) \right)$ is uniformly integrable.\\
	If we do not care about $U^{(n)}$, we can just have $X_{N,n}\le (c+1)^kN^{-1/2}$.
	This remark will be useful in our proof for Theorem \ref{theorem: convergence of the exponential transform of Pi_N}.
\end{remark}
We now give result on the converence of the exponential moments of $(\frac{1}{\sqrt{N}} \Pi_N , N \in \mathbb{N})$.
\begin{theorem}
	For any bounded positive continous function $f \in \mathcal{C}_{b,+}([0,1]\times \mathbb{R})$, we have:
	$$\mathbf{E} \bigg[ \exp\bigg( \frac{1}{\sqrt{N} }\Pi_N (f)\bigg)\bigg] \xrightarrow[N \rightarrow+\infty]{} \mathbf{E}\bigg[ (\mathcal{Z}_{\sqrt{2f}})^k\bigg].$$
	\label{theorem: convergence of the exponential transform of Pi_N}
\end{theorem}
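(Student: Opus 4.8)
The plan is to make the heuristic \eqref{equation: heuristic calculation} rigorous, using the convergence of partition functions together with the uniform exponential estimates already at hand. Fix $f\in\mathcal C_{b,+}([0,1]\times\mathbb R)$ and define $A_N(n,z):=\sqrt{f\!\left(\frac nN,\frac z{\sqrt N}\right)}$ for $(n,z)\in\mathbb N\times\mathbb Z$. Since $f$ is bounded, $\sup_N\|A_N\|_\infty\le\|f\|_\infty^{1/2}<\infty$; since $f$ is continuous, $A_N([t,x]_N)\to\sqrt{f(t,x)}$ for every $(t,x)$, because the representative point attached to $[t,x]_N$ lies within $1/N$ in time and $1/\sqrt N$ in space of $(t,x)$. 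Hence Theorem \ref{theorem: convergence of parition functions} applies with $a=\sqrt f$ and yields $\mathfrak Z_N(N^{-1/4}A_N)\xrightarrow[N\to\infty]{(d)}\mathcal Z_{\sqrt2\sqrt f}=\mathcal Z_{\sqrt{2f}}$. Since by Corollary \ref{corollary: L^k integrability} the family $(\mathfrak Z_N(N^{-1/4}A_N))_N$ is uniformly $L^k$-integrable, and since these variables are nonnegative for $N$ large while $\mathcal Z_{\sqrt{2f}}\ge 0$, this distributional convergence upgrades to $\mathbf E[\mathfrak Z_N(N^{-1/4}A_N)^k]\to\mathbf E[(\mathcal Z_{\sqrt{2f}})^k]$. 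It therefore suffices to prove
\[
\mathbf E\big[\mathfrak Z_N(N^{-1/4}A_N)^k\big]-\mathbf E\big[\exp(N^{-1/2}\Pi_N(f))\big]\xrightarrow[N\to\infty]{}0 .
\]

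\textbf{Comparing the two quantities.} Recall from the proof of Theorem \ref{theorem:uniform integrability of Z_k} that $\mathbf E[\mathfrak Z_N(N^{-1/4}A_N)^k]=\mathbf E\big[\prod_{n=1}^N(1+X_{N,n})\big]$ with $X_{N,n}\ge 0$ given by the expansion \eqref{equation: expansion of X_{N,n}}. With $A_N=\sqrt{f(\cdot)}$ the term $l=2$ of that expansion equals $N^{-1/2}\sum_{z}\sum_{1\le i<j\le k:\,S_n^{(i)}=S_n^{(j)}=z}f(\tfrac nN,\tfrac z{\sqrt N})$, so summing over $n$ gives exactly $N^{-1/2}\Pi_N(f)$; moreover every term with $l\ge 3$ is nonnegative (here the hypothesis $f\ge 0$ is used) and carries a factor $N^{-l/4}\le N^{-3/4}$ together with an indicator $\mathbf E[\prod_h\omega(n,S_n^{(i_h)})]\in\{0,1\}$ that is nonzero only when at least two walks coincide at time $n$. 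Writing $U^{(n)}:=\sum_{1\le i<j\le k}\mathbf 1_{S_n^{(i)}=S_n^{(j)}}$ as in the proof of Theorem \ref{theorem:uniform integrability of Z_k}, this gives $0\le\sum_{n=1}^N X_{N,n}-N^{-1/2}\Pi_N(f)\le C N^{-3/4}\sum_{n=1}^N U^{(n)}$, and, combined with $X_{N,n}\le C'N^{-1/2}U^{(n)}$ from \eqref{equation: T_N,n} and $U^{(n)}\le\binom k2$, also $\sum_{n=1}^N X_{N,n}^2\le C''N^{-1}\sum_{n=1}^N U^{(n)}$, all constants depending only on $k$ and $\|f\|_\infty$. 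Using $x-\tfrac12x^2\le\log(1+x)\le x$ for $x\ge 0$ we obtain the sandwich
\[
\exp\!\Big(N^{-1/2}\Pi_N(f)-\tfrac{C''}2 N^{-1}\!\textstyle\sum_{n}U^{(n)}\Big)\le\prod_{n=1}^N(1+X_{N,n})\le\exp\!\Big(N^{-1/2}\Pi_N(f)+C N^{-3/4}\!\textstyle\sum_{n}U^{(n)}\Big).
\]
Since $\sum_{n=1}^N U^{(n)}=\sum_{1\le i<j\le k}\sum_{n=1}^N\mathbf 1_{S_n^{(i)}=S_n^{(j)}}$ and each inner sum has, by \eqref{equation: local time k=2}, the law of a simple random walk local time at $0$ over $[0,2N]$, we have $N^{-1/2}\sum_n U^{(n)}=O_{\mathbf P}(1)$, so both correction terms tend to $0$ in probability. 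Thus $\prod_{n}(1+X_{N,n})=\exp(N^{-1/2}\Pi_N(f))\,e^{\eta_N}$ with $\eta_N\to 0$ in probability and $-C''\binom k2\le\eta_N\le C N^{-3/4}\sum_n U^{(n)}$.

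\textbf{Passing to expectations.} Write $\mathbf E[\prod_n(1+X_{N,n})]-\mathbf E[\exp(N^{-1/2}\Pi_N(f))]=\mathbf E[\exp(N^{-1/2}\Pi_N(f))(e^{\eta_N}-1)]$ and bound it, by Cauchy--Schwarz, by $\mathbf E[\exp(2N^{-1/2}\Pi_N(f))]^{1/2}\,\mathbf E[(e^{\eta_N}-1)^2]^{1/2}$. Since $N^{-1/2}\Pi_N(f)\le\|f\|_\infty N^{-1/2}\sum_n U^{(n)}$, Hölder's inequality together with Theorem \ref{theorem: boundedness of exponential} (exactly as already used in the proof of Theorem \ref{theorem:uniform integrability of Z_k}) gives $\sup_N\mathbf E[\exp(\beta N^{-1/2}\sum_n U^{(n)})]<\infty$ for every $\beta\ge 0$, so the first factor stays bounded in $N$; likewise $(e^{\eta_N}-1)^{2p}\le 2^{2p-1}(e^{2pCN^{-3/4}\sum_n U^{(n)}}+1)$ has expectation bounded uniformly in $N$ for every $p\ge 1$, so the family $((e^{\eta_N}-1)^2)_N$ is uniformly integrable, and since $\eta_N\to 0$ in probability we conclude $\mathbf E[(e^{\eta_N}-1)^2]\to 0$. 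This establishes the displayed limit and hence the theorem.

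\textbf{Main obstacle.} The delicate point is the comparison step: extracting the exact term $N^{-1/2}\Pi_N(f)$ from the algebraic expansion of $\mathfrak Z_N^k$, controlling the higher-order ($l\ge 3$) contributions and the $\sum_n X_{N,n}^2$ error through the coincidence counter $U^{(n)}$, and then---crucially---showing that these errors disappear after multiplication by $\exp(N^{-1/2}\Pi_N(f))$ and integration. This last passage rests entirely on the uniform exponential-moment bound of Theorem \ref{theorem: boundedness of exponential} (and Remark \ref{remark: uniform integrability of exp(T_N)}), which supplies the uniform integrability needed to interchange limit and expectation; convergence in probability of the ratio $\prod_n(1+X_{N,n})/\exp(N^{-1/2}\Pi_N(f))$ alone would not be enough.
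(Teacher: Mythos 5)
Your proof is correct and follows the same skeleton as the paper's: convergence of $\mathbf{E}\big[\mathfrak{Z}_N(N^{-1/4}A_N)^k\big]$ via Theorem \ref{theorem: convergence of parition functions} and Corollary \ref{corollary: L^k integrability}, identification of $N^{-1/2}\Pi_N(f)$ as the $l=2$ part of $T_N=\sum_{n=1}^N X_{N,n}$, and control of the $l\ge 3$ remainder through the coincidence counter $U^{(n)}$ and the exponential-moment bound of Theorem \ref{theorem: boundedness of exponential}. The only real divergence is in the final passage to expectations: the paper goes from $\mathbf{E}\big[\prod_n(1+X_{N,n})\big]$ to $\mathbf{E}\big[e^{T_N}\big]$ via Theorem \ref{theorem:second relation between products and sums} and then from $\mathbf{E}\big[e^{T_N}\big]$ to $\mathbf{E}\big[e^{N^{-1/2}\Pi_N(f)}\big]$ via Lemma \ref{lemma: estimation between two sequences} --- both soft arguments resting on subsequence extraction and uniform integrability --- whereas you collapse the two steps into a single explicit sandwich $\prod_n(1+X_{N,n})=e^{N^{-1/2}\Pi_N(f)}\,e^{\eta_N}$ followed by the Cauchy--Schwarz bound $\mathbf{E}\big[e^{2N^{-1/2}\Pi_N(f)}\big]^{1/2}\,\mathbf{E}\big[(e^{\eta_N}-1)^2\big]^{1/2}$. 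Your route is more quantitative and self-contained (it dispenses with the two appendix lemmas, at the cost of slightly heavier bookkeeping on $\eta_N$ and the moment bounds for $(e^{\eta_N}-1)^2$), while the paper's modular version isolates the product-versus-exponential comparison as reusable general lemmas; both ultimately rest on exactly the same input, namely the uniform bound $\sup_N\mathbf{E}\big[\exp\big(\beta N^{-1/2}\sum_n\mathbb{1}_{S^{(1)}_n=S^{(2)}_n}\big)\big]<\infty$.
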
 
\begin{proof}[Proof of Theorem \ref{theorem: convergence of the exponential transform of Pi_N}]

For any  bounded nonnegative continous function $f \in \mathcal{C}_{b,+}([0,1]\times \mathbb{R})$, let 
\begin{itemize}
	\item $A_1,A_2,...$ be a sequence of real functions defined on $\mathbb{N}\times \mathbb{Z}$ such that:
	$$A_N ( n,z):= \sqrt{f}\bigg( \frac{n}{N}, \frac{z}{\sqrt{N}}\bigg) \quad \forall n \in \mathbb{N}, z \in \mathbb{Z}.$$
	\item $a:= \sqrt{f}$ and $c:= \| a\|_{\infty}$.
\end{itemize}Notice that due to the continuity of $f$, $\lim_{N \rightarrow \infty} A([t,x]_N) = a(t,x)$ for all $(t,x) \in [0,1]\times \mathbb{R}$. Thus, $(A_N, N \in \mathbb{N})$ sastifies the condition of  Theorem \ref{theorem: convergence of parition functions} and therefore:
$$\mathfrak{Z}_N( N^{-1/4}A_N ) \xrightarrow[N\rightarrow \infty]{(d)} \mathcal{Z}_{\sqrt{2f}}.$$
Hence, from the uniform integrability in Corollary \ref{corollary: L^k integrability}, we deduce that:
\begin{equation}
	\label{equation: convergence Z_N}
\mathbf{E}\bigg[ \left( \mathfrak{Z}_N( N^{-1/4}A_N ) \right)^k\bigg]  \xrightarrow[N \rightarrow+\infty]{} \mathbf{E}\bigg[(\mathcal{Z}_{\sqrt{2f}})^k \bigg].
\end{equation}
Using again the quantity $X_{N,n}$ defined by \eqref{equation: definition X_{N,n}}, we have shown in \eqref{equation: X_{N,N} T_{N}} that:
$$	\mathbf{E}\bigg[ \left( \mathfrak{Z}_N( N^{-1/4}A_N ) \right)^k\bigg] = \mathbf{E}\left[ \prod_{n=1}^N (1+X_{N,n}) \right]$$
So the convergence \eqref{equation: convergence Z_N} can be rewritten as:
\begin{equation}
	\label{equation: convergence of products}
	\mathbf{E}\left[ \prod_{n=1}^N (1+X_{N,n}) \right]  \xrightarrow[N \rightarrow+\infty]{} \mathbf{E}\bigg[(\mathcal{Z}_{\sqrt{2f}})^k  \bigg]. 
\end{equation}
From Remark \ref{remark: uniform integrability of exp(T_N)}, we know that the sequence $(T_N,N \in \mathbb{N})$ with $T_N=\sum_{n=1}^N X_{N,n}$ satisfies that $\left(e^{\beta T_N},  N \in \mathbb{N}) \right)$ is uniformly integrable and that: $$0\le X_{N,n} \le (c+1)^k N^{-1/2}.$$
Hence, using Theorem \ref{theorem:second relation between products and sums} in Appendix \ref{appendix: sums and products} and the convergence \eqref{equation: convergence of products}, we deduce that:
\begin{equation}\label{equation: covergence of T_n}
	\mathbf{E}\bigg[ e^{T_N}\bigg] \xrightarrow[N \rightarrow+\infty]{} \mathbf{E}\bigg[ (\mathcal{Z}_{\sqrt{2f}})^k\bigg].
\end{equation}
We now investigate the relation between $T_N$ and $\frac{1}{\sqrt{N}} \Pi_{N}(f)$. Observe that:
$$0 \le \frac{1}{\sqrt{N}} \Pi_{N}(f) \le T_N.$$
Indeed, from the expansion \eqref{equation: expansion of X_{N,n}}, we have:
\begin{align*}
	&T_N- \frac{1}{\sqrt{N}} \Pi_{N}(f) =
	\\
	=&  \sum_{n=1}^N \sum_{l=3}^k \sum_{1 \le i_1< i_2<...<i_l \le k}  N^{-l/4} \prod_{h=1}^l A_N(n,S^{(i_h)}_n) \mathbf{E} \bigg[ \prod_{h=1}^l \omega(n,S^{(i_l)}_n) \bigg]  \ge 0.
\end{align*} 
Then following the same arguments that have been used to bound $T_N$ in \eqref{equation: U^{(n)}}  and \eqref{equation: T_N,n}, one can show that:
$$T_N- \frac{1}{\sqrt{N}} \Pi_{N}(f) \le (c+1)^kN^{-3/4} \sum_{1 \le i < j \le k} \sum_{n=1}^N \mathbb{1}_{S_n^{(i)}=S_n^{(j)}} .$$
where the upper bound converges in distribution to $0$ when $N$ converges to infinity.\\
Thus, by applying Lemma \ref{lemma: estimation between two sequences} in Appendices to two sequences $\bigg( e^{ \frac{1}{\sqrt{N}} \Pi_N(f)}  , N \in \mathbb{N}\bigg)$ and $( e^{T_N} , N \in \mathbb{N})$, one can conclude that:
$$\mathbf{E} \bigg[ \exp\bigg( \frac{1}{\sqrt{N} }\Pi_N (f)\bigg)\bigg] \xrightarrow[N \rightarrow+\infty]{} \mathbf{E}\bigg[ (\mathcal{Z}_{\sqrt{2f}})^k\bigg] < \infty.$$
\end{proof}

\subsection{Convergence of collision measures}
We begin by proving the  weak tightness of $\big( \frac{1}{\sqrt{N}} \Pi_N, N \in \mathbb{N} \big)$, then giving the proof of Theorem  \ref{theorem: main theorem}. We refer to \cite[p.118,119]{Kallenberg2017} for the weak tightness. The weak tightness is crucial as it allows us to take convergent subsequences of $(\frac{1}{\sqrt{N}}\Pi_N;N \in \mathbb{N})$(cf. Theorem \ref{theorem: Prokhorov's theorem}).
\begin{theorem}(Prokhorov's theorem, \cite[Theorems 5.1 and 5.2, p.59-60]{Billingsley1999})
	\label{theorem: Prokhorov's theorem}
	Let $E$ be a Polish space and $\Theta$ be a family of probability measures on $E$, then $\Theta$ is tight if and only if $\Theta$ is a relatively compact subset of $\mathcal{P}$, where $\mathcal{P}$ is the topological space of all probability measures on $E$, equipped with the weak convergence topology (see \cite[Chapter 4]{Kallenberg2017} for more details).
\end{theorem}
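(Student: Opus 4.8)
This is the classical Prokhorov theorem; in the body of the paper it is used only as a black box, with a reference to \cite{Billingsley1999}, so the argument below is merely the proof I would give if one were demanded. The plan is to treat the two implications separately. Since $E$ is Polish, $\mathcal{P}(E)$ with the weak topology is itself Polish and in particular metrizable (for instance by the L\'evy--Prokhorov metric); I would record this first, so that relative compactness may be checked through sequences.

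\emph{Relative compactness $\Rightarrow$ tightness.} Here I would argue by contradiction, exploiting separability. Fix $\varepsilon>0$, and for each $k\ge1$ a countable cover of $E$ by balls $B(x^{(k)}_i,1/k)$, $i\ge1$. The key sub-claim is that for every $k$ there is an index $n_k$ with $\inf_{\mu\in\Theta}\mu\bigl(\bigcup_{i\le n_k}B(x^{(k)}_i,1/k)\bigr)\ge 1-\varepsilon2^{-k}$: otherwise one selects $\mu_n\in\Theta$ violating this for $n_k=n$, extracts a weakly convergent subsequence $\mu_{n_j}\to\mu$, and then, since $\mu_{n_j}\bigl(\bigcup_{i\le m}B(x^{(k)}_i,1/k)\bigr)< 1-\varepsilon2^{-k}$ as soon as $n_j\ge m$, the Portmanteau bound for open sets gives $\mu\bigl(\bigcup_{i\le m}B(x^{(k)}_i,1/k)\bigr)\le 1-\varepsilon2^{-k}$; letting $m\to\infty$ yields $\mu(E)<1$, a contradiction. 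Granting the sub-claim, the set $K:=\bigcap_{k\ge1}\bigcup_{i\le n_k}\overline{B}(x^{(k)}_i,1/k)$ is closed and totally bounded, hence compact by completeness of $E$, and satisfies $\mu(E\setminus K)\le\sum_{k}\varepsilon2^{-k}=\varepsilon$ for every $\mu\in\Theta$.

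\emph{Tightness $\Rightarrow$ relative compactness.} This is the substantive half, and the plan is to reduce to the compact case. I would fix a homeomorphism of $E$ onto a subset of the Hilbert cube $H=[0,1]^{\mathbb N}$ and view each $\mu\in\Theta$ as a Borel probability measure on the compact metric space $H$. Because $C(H)$ is separable, the Riesz representation theorem combined with Banach--Alaoglu shows that $\mathcal{P}(H)$ is weakly compact and metrizable, so any sequence in $\Theta$ has a subsequence converging weakly in $\mathcal{P}(H)$ to some $\nu$. Tightness then enters: choosing compacts $K_m\subseteq E$ with $\inf_{\mu\in\Theta}\mu(K_m)\ge1-1/m$ and using that $K_m$ is closed in $H$, the Portmanteau theorem for closed sets gives $\nu(K_m)\ge\limsup_j\mu_{n_j}(K_m)\ge1-1/m$, so $\nu$ is carried by $\bigcup_mK_m\subseteq E$ and restricts to an element of $\mathcal{P}(E)$. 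It remains to check that the subsequence converges weakly to $\nu$ in $\mathcal{P}(E)$ and not merely in $\mathcal{P}(H)$: given $f\in\mathcal{C}_b(E)$ one extends $f$ from a neighbourhood of $K_m$ to a bounded continuous function on $H$ by Tietze and bounds the discrepancy of the two integrals by a term of order $\|f\|_\infty/m$ plus one vanishing along the subsequence. Finally, metrizability of $\mathcal{P}(E)$ promotes this sequential statement to relative compactness of $\Theta$.

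The step I expect to be the main obstacle is exactly this reduction inside the second implication: one must manufacture the ambient compact space $H$, transport the measures there, and then use tightness both to show that no mass leaks onto the ``boundary'' $H\setminus E$ and that weak convergence survives restriction back to $E$. The remaining ingredients --- Portmanteau, separability, completeness, and the Banach--Alaoglu compactness of $\mathcal{P}(H)$ --- are routine. As this is textbook material, the paper simply cites \cite{Billingsley1999} rather than reproducing the argument.
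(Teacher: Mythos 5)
Your proof is correct: both implications follow the classical argument of Billingsley (separability plus Portmanteau for the ``compactness implies tightness'' direction, and the Hilbert-cube embedding with no mass leaking onto $H\setminus E$ for the converse), which is precisely the source the paper cites. The paper itself offers no proof of this statement --- it is quoted as Theorems 5.1 and 5.2 of \cite{Billingsley1999} and used as a black box --- so there is nothing to compare beyond noting that your argument faithfully reproduces the cited one.
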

\begin{remark}
	In our framework of random measures, $E$ is taken to be the space of all positive finite measures on $[0,1]\times \mathbb{R}$, equipped with weak convergence topology.
\end{remark}
\begin{theorem}\label{theorem: tightness}
	The sequence of random measures $\big( \frac{1}{\sqrt{N}} \Pi_N, N \in \mathbb{N} \big)$  is weakly tight.
\end{theorem}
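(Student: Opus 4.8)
The plan is to verify Prokhorov's criterion (Theorem \ref{theorem: Prokhorov's theorem}) on the Polish space of finite positive measures on $[0,1]\times\mathbb{R}$ equipped with the weak topology, relying on the classical fact that a family of such measures is relatively compact exactly when it has uniformly bounded total mass and puts uniformly little mass outside large compact sets. Because the time coordinate lives in the compact interval $[0,1]$, only the $\mathbb{R}$--direction needs control, so the argument reduces to two estimates that must be uniform in $N$: a tail bound for the total masses $N^{-1/2}\|\Pi_N\|$, and a bound showing that $N^{-1/2}\Pi_N\big([0,1]\times([-M,M])^c\big)$ is small with high probability once $M$ is large. Granting these, for each $\eta>0$ one assembles a relatively compact set $\mathcal{K}_\eta=\{\mu:\ \mu([0,1]\times\mathbb{R})\le M_0,\ \mu([0,1]\times([-M_j,M_j])^c)\le 2^{-j}\text{ for all }j\ge 1\}$, with $M_0$ and $M_1<M_2<\cdots$ chosen from the two estimates, and checks that $\inf_N\mathbf{P}(N^{-1/2}\Pi_N\in\mathcal{K}_\eta)\ge 1-\eta$, which is precisely what Prokhorov's theorem asks for.

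For the total--mass tail bound I would apply Theorem \ref{theorem: convergence of the exponential transform of Pi_N} to the constant function $\mathbf{1}\in\mathcal{C}_{b,+}([0,1]\times\mathbb{R})$: since $\Pi_N(\mathbf{1})=\|\Pi_N\|$, this gives $\mathbf{E}\big[e^{N^{-1/2}\|\Pi_N\|}\big]\to\mathbf{E}\big[(\mathcal{Z}_{\sqrt 2})^k\big]<\infty$, hence $\sup_N\mathbf{E}\big[e^{N^{-1/2}\|\Pi_N\|}\big]<\infty$, and Markov's inequality turns this into a uniform bound $\mathbf{P}(N^{-1/2}\|\Pi_N\|>M)\le Ce^{-M}$. (Alternatively one may use Remark \ref{remark: uniform integrability of exp(T_N)} together with the pointwise inequality $N^{-1/2}\|\Pi_N\|\le T_N$.)

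For the escape--of--mass bound I would work with first moments. Using that $S^{(1)},\dots,S^{(k)}$ are i.i.d.\ and that $\mathbf{P}(S^{(i)}_n=S^{(j)}_n=z)=p(n,z)^2$, the expectation of $\Pi_N\big([0,1]\times([-M,M])^c\big)$ is $\binom{k}{2}\sum_{n=1}^N\sum_{|z|>M\sqrt N}p(n,z)^2$, which I would dominate by $\binom{k}{2}\sum_{n=1}^N\big(\sup_z p(n,z)\big)\,\mathbf{P}(|S_n|>M\sqrt N)$. The local limit theorem (equivalently, the Stirling bound $\sqrt{i}\,p(i,x)\le C$ already exploited in Lemma \ref{lemma: convergence and boundedness of p^N_n}) gives $\sup_z p(n,z)\le C/\sqrt n$, while Chebyshev's inequality gives $\mathbf{P}(|S_n|>M\sqrt N)\le n/(M^2N)$ for $n\le N$; summing $\sum_{n\le N}\sqrt n=O(N^{3/2})$ then yields $\mathbf{E}\big[N^{-1/2}\Pi_N([0,1]\times([-M,M])^c)\big]=O(M^{-2})$ uniformly in $N$, and Markov's inequality completes this step.

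I expect the computations themselves to be routine; the one point that needs care is the bookkeeping around Prokhorov's theorem on a non-compact base space, namely correctly packaging the total--mass tail bound and the escape--of--mass bound into a single relatively compact family of measures and verifying that this family contains $N^{-1/2}\Pi_N$ with probability arbitrarily close to one, uniformly in $N$. Once that template is in place, the only analytic input is the second--moment estimate above together with the local central limit theorem, both of which are already in the paper's toolkit.
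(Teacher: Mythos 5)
Your proposal is correct, and its skeleton --- Prokhorov's theorem applied to a relatively compact family of finite measures singled out by a uniform total--mass bound plus a uniform spatial localization bound --- is the same as the paper's. The two key estimates are obtained differently, though. For localization the paper does not estimate escaped mass at all: it notes that $N^{-1/2}\Pi_N$ is supported in $[0,1]\times[-m,m]$ whenever $\max_{i,\,n\le N}|S^{(i)}_n|\le m\sqrt N$, and invokes tightness of $N^{-1/2}\sup_{n\le N}|S_n|$ (a consequence of Donsker's theorem), which lets it work with the simpler compact sets $K_m=\{\mu:\ \|\mu\|\le m,\ \operatorname{supp}\mu\subset[0,1]\times[-m,m]\}$ instead of your countable family of escape conditions; your first--moment computation with the local CLT and Chebyshev is a valid, somewhat more computational substitute, and is the kind of argument one would need if the measures were not automatically compactly supported. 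For the total mass the paper bounds $\|\Pi_N\|$ by the number of colliding pairs and uses that $N^{-1/2}\sum_{n\le N}\mathbf{1}_{S^{(1)}_n=S^{(2)}_n}$ converges in law (local time of a single walk at the origin), hence is tight; your route through $\sup_N\mathbf{E}\big[e^{N^{-1/2}\|\Pi_N\|}\big]<\infty$ --- either via Theorem \ref{theorem: convergence of the exponential transform of Pi_N} applied to $f\equiv 1$, or via the pointwise bound $N^{-1/2}\|\Pi_N\|\le T_N$ together with Remark \ref{remark: uniform integrability of exp(T_N)} --- is also legitimate and not circular, since those results are established independently of tightness, and it delivers a stronger (exponential) tail than the argument actually requires. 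Both versions go through; the paper's is slightly more elementary, yours is more self-contained at the level of explicit moment bounds.
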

\begin{proof}[Proof of Theorem \ref{theorem: tightness}]
	Let $\mathcal{M}$ denote the set of all finite positive measures on the Polish space $[0,1]\times \mathbb{R}$, and
	\begin{align*}
		&L_m:=\{ \mu \in \mathcal{M}: \| \mu \| \le m \},\\
		&M_m:=\{ \mu \in \mathcal{M}: \text{supp} \mu \subset [0,1]\times [-m,m] \},\\
		&K_m:= L_m\cap M_m.
	\end{align*}
	
	So $K_m$ is a collection of some measures that are uniformly bounded and contained within the same compact set. Thus, by Lemma 4.4 in \cite{Kallenberg2017}, $K_m$ is a weakly relatively compact subset of $\mathcal{M}$. So, by the definition of tightness, it suffices to prove that $$\lim_{m \rightarrow +\infty}  \sup_N\mathbf{P}( N^{-1/2}\Pi_N \not \in K_m)=0,$$
which is true because $$\lim_{m \rightarrow +\infty} \sup_N \mathbf{P}( N^{-1/2}\Pi_N \not \in M_m) =0 \text{  and  }\lim_{m \rightarrow +\infty} \sup_N\mathbf{P}( N^{-1/2}\Pi_N \not \in L_m)  =0.$$
	Indeed, for  $M_m$, we observe that:
	\begin{align*}\mathbf{P}( &N^{-1/2}\Pi_N \not \in M_m)\\
		 &\le \mathbf{P}\bigg(  \sup_{  \substack{ 1 \le n \le N \\ 1 \le i \le k}} |S^{(i)}_n| > m\sqrt{N} \bigg) \le  k\mathbf{P} \bigg( \frac{\sup_{  \substack{ 1 \le n \le N }} |S_n|}{\sqrt{N}} > m\bigg). 
	\end{align*}
	Since the sequence $\left( \frac{1}{\sqrt{N}}\sup_{  \substack{ 1 \le n \le N }} |S_n|  , N \in \mathbb{N}\right)$ converges in distribution to a real random variable (by Donsker's theorem), this sequence is tight by Prokhorov's theorem \cite[Theorem 5.2, p. 60]{Billingsley1999}. Thus,
	\begin{align*}
		\lim_{m \rightarrow +\infty}& \sup_N \mathbf{P}( N^{-1/2}\Pi_N \not \in M_m)  \le k \lim_{m \rightarrow +\infty} \sup_N \mathbf{P}\bigg( \frac{\sup_{  \substack{ 1 \le n \le N }} |S_n|}{\sqrt{N}} > m\bigg)=0.
	\end{align*}
	For $L_m$, we have:
	\begin{align*}
		&\mathbf{P}( N^{-1/2}\Pi_N \not \in L_m) \le \mathbf{P}\left( \sum_{n=1}^N \sum_{1 \le i <j \le k} \mathbb{1}_{S^{(i)}_n=S^{(j)}_n} > m\sqrt{N} \right)  \\
		& \le  \frac{k(k-1)}{2}\mathbf{P} \bigg( \frac{1}{\sqrt{N}}\sum_{n=1}^N  \mathbb{1}_{S^{(1)}_n=S^{(2)}_n} > \frac{2m}{ k(k-1)} \bigg) 
		\\
		&= \frac{k(k-1)}{2}\mathbf{P} \bigg( \frac{1}{\sqrt{N}}\sum_{n=1}^{2N}  \mathbb{1}_{S_n=0} > \frac{2m}{ k(k-1)} \bigg).
	\end{align*}
	Similarly, because $\left( \frac{1}{\sqrt{N}}\sum_{n=1}^{2N}  \mathbb{1}_{S_n=0} ,N \in \mathbb{N} \right)$ also converges in distribution \cite[Theorem 10.1]{Pal2005}, we have $$\lim_{m \rightarrow +\infty} \sup_N\mathbf{P}( N^{-1/2}\Pi_N \not \in L_m)  =0. $$
	Hence the conclusion.
\end{proof}
Now, by combining all results we have shown so far, we can give the proof of Theorem  \ref{theorem: main theorem}.
\begin{proof}[Proof of Theorem \ref{theorem: main theorem}]
	
By Theorem \ref{theorem: tightness} and Prokhorov's theorem \cite[Theorem 5.1]{Billingsley1999}, there exists a random finite positive measure $\mathcal{N}'$ on $[0,1]\times \mathbb{R}$ such that there is a subsequence of $( \frac{1}{\sqrt{N}} \Pi_N, N \in \mathbb{N})$ that converges in distribution to $\mathcal{N}'$. For convenience, assume that $\mathcal{N}'$ is defined on the existing probability space $(\Omega,\mathcal{A}, \mathbf{P})$.\\
Besides, for any $f \in C_{b,+}([0,1]\times \mathbb{R})$, by the proof of Theorem \ref{theorem: convergence of the exponential transform of Pi_N}, it is known that:
$(  e^{\frac{1}{\sqrt{N}} \Pi_N(f)}, N \in \mathbb{N})$ is uniformly integrable. Thus, $\mathbf{E} \left[ e^{\mathcal{N}'(f)} \right]$ is finite and equal to $\mathbf{E}\left[ \left(\mathcal{Z}_{\sqrt{2f}}\right)^k \right]$.\\
We see that to show $\frac{1}{\sqrt{N}} \Pi_N \xrightarrow[N \rightarrow \infty]{wd} \mathcal{N}'$, it suffices to prove that $\mathcal{N}'$ is uniquely defined in distribution.\\
Indeed, let $\mathcal{N}''$ be another random bounded measure on $[0,1]\times \mathbb{R}$ such that there is a subsequence of $( \frac{1}{\sqrt{N}} \Pi_N, N \in \mathbb{N})$ that converges in distribution to it. Assume $\mathcal{N}''$ is also defined on $(\Omega,\mathcal{A}, \mathbf{P})$.\\
In the following, we will prove that $\mathcal{N}'(h) \stackrel{(d)}{=} \mathcal{N}''(h)$ for all $h \in C_b([0,1]\times \mathbb{R})$, then the uniqueness of $\mathcal{N}'$ follows immediately from Lemma 4.7 in \cite{Kallenberg2017}.\\
Let $f,g$ be two continous nonnegative bounded functions on $[0,1]\times \mathbb{R}$. For any two nonnegative numbers $a$ and $b$, $af+bg$ is also a continous bounded nonnegative function. Hence,
$$\mathbf{E} \left[ e^{\mathcal{N}'(af+bg)} \right]=\mathbf{E}\left[ \left(\mathcal{Z}_{\sqrt{2(af+bg)}}\right)^k \right]=\mathbf{E} \left[ e^{\mathcal{N}''(af+bg)} \right].$$
Either, for all $a,b \ge 0$,
$$\mathbf{E} \left[ e^{a\mathcal{N}'(f)+b\mathcal{N}'(g)} \right]=\mathbf{E} \left[ e^{a\mathcal{N}''(f)+b\mathcal{N}''(g)} \right].$$
Or, for all $a,b \ge 0$,
$$a\mathcal{N}'(f)+b\mathcal{N}'(g) \stackrel{(d)}{=} a\mathcal{N}''+b\mathcal{N}''(g).$$
So by Cramer-Wold theorem \cite[Corollary 4.5]{Kallenberg1997}, we have:
$$( \mathcal{N}'(f),\mathcal{N}'(g)) \stackrel{(d)}{=} ( \mathcal{N}''(f),\mathcal{N}''(g)).$$
Then using Cramer-Wold Theorem again, we deduce that $\mathcal{N}'(f-g)\stackrel{(d)}{=}  \mathcal{N}''(f-g)$ for all $f,g \in \mathcal{C}_{b,+}([0,1]\times \mathbb{R})$. So, $\mathcal{N}'(h)\stackrel{(d)}{=}  \mathcal{N}''(h)$ for all $h \in \mathcal{C}_{b}([0,1]\times \mathbb{R})$ because any bounded continous function $h$ can be written as the difference of two continuous bounded nonnegative functions.\\
Thus, we proved that $\frac{1}{\sqrt{N}} \Pi_N \xrightarrow[N \rightarrow \infty]{wd} \mathcal{N}$, where $\mathcal{N}$ is a positive random measure $[0,1]\times \mathbb{N}$ that is uniquely defined in distribution by the following equation for all $f \in \mathcal{C}_{b,+}([0,1]\times \mathbb{R})$:
$$\mathbf{E}( e^{\mathcal{N}(f)})=\mathbf{E}\left[  \left(\mathcal{Z}_{\sqrt{2f}}\right)^k\right] .$$
Finally, the convergence of $(\frac{1}{\sqrt{N}} \Pi'_N, N \in \mathbb{N})$ follows directly from the convergence  of $(\frac{1}{\sqrt{N}} \Pi_N, N \in \mathbb{N})$ and Lemma \ref{lemma: estimation between two sequences} by noticing that $\Pi_N(f) \ge \Pi'_N(f) \ge 0 $ for all $f \in \mathcal{C}_{b,+}([0,1]\times \mathbb{R})$, and
\begin{align*}
&\mathbf{E}\left(  \frac{1}{\sqrt{N}} \| \Pi_N-\Pi'_N \|\right)
\\
&\le \frac{1}{\sqrt{N}}\mathbf{E}\left[ \sum_{n=1}^N \sum_{z \in \mathbb{Z}} \binom{k}{2}\sum_{1 \le i_1\le i_2 \le i_3 \le k}\mathbf{1}_{\left\{ \substack{ S^{(i_1)}_n=S^{(i_2)}_n=S^{(i_3)}_n=z}\right\}} \right]\\
&\le \frac{k^5}{\sqrt{N}} \sum_{n=1}^N \mathbf{P}( S^{(1)}_n=S^{(2)}_n=S^{(3)}_n) \le \frac{k^5}{\sqrt{N}} \sum_{n=1}^N\max_{z \in \mathbb{Z}} ( \mathbf{P}( S^{(3)}_n=z)) \mathbf{P}( S^{(1)}_n=S^{(2)}_n)
\\
&=  \frac{k^5}{\sqrt{N}} \sum_{n=1}^N \frac{1}{2^n}\binom{n}{ \lceil n/2\rceil} \frac{1}{2^{2n}} \binom{2n}{n} \le \frac{k^5}{\sqrt{N}} C^2\sum_{n=1}^N \frac{1}{n} \xrightarrow[N \rightarrow \infty ]{} 0,
\end{align*}
for some constant $C$ such that $ \frac{1}{2^n}\binom{n}{ \lceil n/2\rceil} \le C \frac{1}{\sqrt{n}}$ for all $n \in \mathbb{N}.$ Note that such $C$ exists thanks to Sterling's formula. Hence, our theorem is proved.
\end{proof}
\section*{Acknowledgement}
I am indebted to my Master thesis supervisor Quentin Berger for his invaluable help during my master internship and for introducing me to the techniques of $U$-statistics for solving problems in Statistical Mechanics. My sincere gratitude is reserved for Nicolas Fournier for many crucial discussions. Also, I'm fortunate to have Viet-Chi Tran and Hélène Guérin as my current supervisors, without their reviews and their push, I could not have finished this paper. Finally, this project is supported by Mathematics for Public Health (MfPH) program at the Fields Institute for Research in Mathematical Sciences, Canada, and partly funded by the Bézout Labex, funded by ANR, reference ANR-10-LABX-58.

\begin{appendices}
	\section{On the asymptotic relation between products and sums of independent random variables}
	\label{appendix: sums and products}
	We consider a probability space $(\Omega, \mathcal{A}, \mathbf{P})$.
	For any $N$, let $X_{N} = ( X_{N,n} , n \in \mathbb{N})$ be a sequence of nonnegative random variables  such that the sum $S_N=\sum_{n \ge 1 } X_{N,n}$ is almost surely finite.\\
	Suppose that there exists a sequence of numbers $(c_N, N \in \mathbb{N})$ converging to 0 such that for all $N$, $$c_N \ge \| X_N\|_{\infty} =\sup_n |X_{N,n}|. $$
	Let $$P_N := \prod_{n \ge 1}  (1+X_{N,n}).$$ 
	In this Apprendix, we establish two relations between the sum $S_N$ and the product $P_N$  when $N$ converges to infinity. Note that we do not assume $(X_{N,n}; n \in \mathbb{N} , N \in \mathbb{N})$ to be independent nor identically distributed.\\

	\begin{theorem}(First relation)
		\label{theorem:relation between product and sum}
		For any real random variable $Y$, the following two assertions are equivalent:
	 $$1) \quad S_N \xrightarrow[N \rightarrow +\infty]{(d)} Y \qquad  \qquad 2) \quad P_N \xrightarrow[N \rightarrow +\infty]{(d)} e^{Y}.$$
	\end{theorem}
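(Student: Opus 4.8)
The plan is to squeeze $P_N$ between two exponentials of $S_N$, using that the increments $X_{N,n}$ are uniformly bounded by $c_N\to 0$. First I would record that $P_N$ is well defined and satisfies $1\le P_N\le e^{S_N}$: indeed each factor is $\ge 1$, and since $X_{N,n}\ge 0$ the partial sums $\sum_{n\le M}\log(1+X_{N,n})$ increase in $M$ and are bounded above by $\sum_{n\ge 1}X_{N,n}=S_N<\infty$ a.s.; in particular $\log P_N\in[0,\infty)$ is well defined. Then, from the elementary inequalities $x-\tfrac{x^2}{2}\le\log(1+x)\le x$ (valid for $x\ge 0$) together with $0\le X_{N,n}\le c_N$, summing over $n$ yields the key two-sided bound
$$\bigl(1-\tfrac{c_N}{2}\bigr)\,S_N\ \le\ \log P_N\ \le\ S_N,\qquad\text{equivalently}\qquad e^{(1-c_N/2)S_N}\ \le\ P_N\ \le\ e^{S_N}.$$

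To prove $1)\Rightarrow 2)$ I would assume $S_N\xrightarrow{(d)}Y$ and argue via the converging-together lemma. By the continuous mapping theorem $S_Ne^{S_N}\xrightarrow{(d)}Ye^{Y}$, so $(S_Ne^{S_N})_N$ is tight; combining the bound above with $1-e^{-u}\le u$ for $u\ge 0$ gives
$$0\ \le\ e^{S_N}-P_N\ \le\ e^{S_N}\bigl(1-e^{-c_NS_N/2}\bigr)\ \le\ \tfrac{c_N}{2}\,S_Ne^{S_N}\ \xrightarrow{\ \mathbf{P}\ }\ 0,$$
since $c_N\to 0$ times a tight sequence is $o_{\mathbf{P}}(1)$. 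As $e^{S_N}\xrightarrow{(d)}e^{Y}$ by continuous mapping, Slutsky's theorem gives $P_N=e^{S_N}-(e^{S_N}-P_N)\xrightarrow{(d)}e^{Y}$.

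The direction $2)\Rightarrow 1)$ is symmetric. Assuming $P_N\xrightarrow{(d)}e^{Y}$, continuity of $x\mapsto\log x$ on $(0,\infty)$ (and $P_N\ge 1$) gives $\log P_N\xrightarrow{(d)}Y$, so $(\log P_N)_N$ is tight. For $N$ large enough that $c_N<2$, the key bound rearranges to
$$0\ \le\ S_N-\log P_N\ \le\ \frac{c_N/2}{1-c_N/2}\,\log P_N\ \xrightarrow{\ \mathbf{P}\ }\ 0,$$
and Slutsky's theorem yields $S_N=\log P_N+(S_N-\log P_N)\xrightarrow{(d)}Y$.

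I do not expect a genuine obstacle here: once the sandwich $e^{(1-c_N/2)S_N}\le P_N\le e^{S_N}$ is in place, everything follows from the continuous mapping theorem and the converging-together lemma. The only points deserving a little care are the well-definedness and strict positivity of $P_N$ (needed to take $\log P_N$), and the elementary observation that a distributionally convergent — hence tight — sequence multiplied by a deterministic null sequence converges to $0$ in probability; both are routine, and the uniform smallness $c_N\to 0$ of the increments is exactly what makes the two exponential bounds collapse onto one another.
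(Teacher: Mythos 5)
Your proposal is correct and follows essentially the same route as the paper: both proofs sandwich $\log P_N$ against $S_N$ via the elementary bounds $x-\tfrac{x^2}{2}\le\log(1+x)\le x$ together with $\|X_N\|_\infty\le c_N\to 0$, and then conclude with Slutsky's lemma in each direction. The only cosmetic difference is in $2)\Rightarrow 1)$, where the paper invokes the inequality $x-\log(1+x)\le x\log(1+x)$ to get $0\le S_N-\log P_N\le c_N\log P_N$, whereas you rearrange your two-sided bound to get the prefactor $\tfrac{c_N/2}{1-c_N/2}$; both yield the same conclusion.
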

	\begin{remark}
		There is no moment assumption on $Y$.
	\end{remark}
	\begin{theorem} (Second relation)
		\label{theorem:second relation between products and sums}
		Assume that the sequence $ \left( \exp(S_N), N \in \mathbb{N}\right)$ is uniformly integrable. Then for any real constant $C$, the following two assertions are equivalent:
	$$ 1) \quad \mathbf{E}\left[e^{S_N } \right]\xrightarrow[N\rightarrow+\infty]{} C \quad  \qquad  2) \quad \mathbf{E}\left[ P_N \right]\xrightarrow[N\rightarrow+\infty]{} C.$$
	\end{theorem}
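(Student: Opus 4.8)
The plan is to bypass distributional arguments and prove directly that $\mathbf{E}[e^{S_N}] - \mathbf{E}[P_N] \to 0$; the claimed equivalence then follows at once, since $\mathbf{E}[P_N] = \mathbf{E}[e^{S_N}] - \big(\mathbf{E}[e^{S_N}] - \mathbf{E}[P_N]\big)$ and symmetrically. The backbone is the elementary two-sided bound $x - \tfrac{x^2}{2} \le \log(1+x) \le x$, valid for $x \ge 0$. Applying the upper bound termwise and summing gives $\log P_N \le S_N$, i.e.\ $0 < P_N \le e^{S_N}$ pointwise (so in particular $\mathbf{E}[P_N]$ is finite and well-defined). For the matching lower bound, since $0 \le X_{N,n} \le c_N$ and $c_N \le 1$ for all large $N$ (which is all that matters for an $N \to \infty$ statement), the lower estimate yields $\log(1 + X_{N,n}) \ge X_{N,n}\big(1 - \tfrac{c_N}{2}\big)$; summing gives $P_N \ge e^{(1 - c_N/2)S_N} = e^{S_N}e^{-c_N S_N/2}$. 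Putting the two together, $0 \le e^{S_N} - P_N \le e^{S_N}\big(1 - e^{-c_N S_N/2}\big)$ almost surely.

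Next I would show $\mathbf{E}\big[e^{S_N}(1 - e^{-c_N S_N/2})\big] \to 0$ by a truncation argument anchored on the uniform integrability hypothesis. Fix $M > 0$ and split the expectation over $\{S_N \le M\}$ and $\{S_N > M\}$. On the first event, the inequality $1 - e^{-u} \le u$ bounds the integrand by $\tfrac{c_N M}{2}e^{S_N}$, so that contribution is at most $\tfrac{c_N M}{2}\sup_N \mathbf{E}[e^{S_N}]$, and $\sup_N \mathbf{E}[e^{S_N}] < \infty$ precisely because $(e^{S_N})_N$ is uniformly integrable. On the second event, bound $1 - e^{-c_N S_N/2} \le 1$ and keep $\mathbf{E}\big[e^{S_N}\mathbf{1}_{\{S_N > M\}}\big]$, whose supremum over $N$ tends to $0$ as $M \to \infty$, again by uniform integrability. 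Hence, given $\varepsilon > 0$, first choose $M$ with $\sup_N \mathbf{E}\big[e^{S_N}\mathbf{1}_{\{S_N > M\}}\big] < \varepsilon/2$, and then let $N \to \infty$ so that $\tfrac{c_N M}{2}\sup_N \mathbf{E}[e^{S_N}] < \varepsilon/2$; this gives $\limsup_N\big(\mathbf{E}[e^{S_N}] - \mathbf{E}[P_N]\big) \le \varepsilon$, and letting $\varepsilon \downarrow 0$ finishes the proof.

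The main obstacle is exactly this last estimate. The naive move of using $1 - e^{-c_N S_N/2} \le \tfrac{c_N S_N}{2}$ globally reduces the problem to showing $\sup_N \mathbf{E}[S_N e^{S_N}] < \infty$, which need not hold: uniform integrability of $(e^{S_N})_N$ gives no control on $\mathbf{E}[S_N e^{S_N}]$. The remedy is to linearize only on the bulk $\{S_N \le M\}$, where $c_N S_N$ is forced to be small because $c_N \to 0$, and to absorb the tail $\{S_N > M\}$ straight into uniform integrability. All remaining points — convergence of the infinite products and series, and passing to the limit through the pointwise inequalities — are routine.
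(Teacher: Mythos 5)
Your proof is correct, and it takes a genuinely different route from the paper's. The paper proves this statement by leaning on its Theorem \ref{theorem:relation between product and sum}: uniform integrability gives tightness, so one extracts a subsequence along which $e^{S_{n_k}}$ converges in distribution to some $Z$ with $\mathbf{E}[Z]=C$; the first relation transfers this to $P_{n_k}\xrightarrow{(d)}Z$, the domination $0\le P_N\le e^{S_N}$ gives uniform integrability of $(P_N)$ and hence $\mathbf{E}[P_{n_k}]\to C$, and a sub-subsequence argument upgrades this to the full sequence. You instead prove the quantitative statement $\mathbf{E}\bigl[e^{S_N}-P_N\bigr]\to 0$ directly, via the two-sided sandwich $e^{S_N}e^{-c_N S_N/2}\le P_N\le e^{S_N}$ (from $x-\tfrac{x^2}{2}\le\log(1+x)\le x$ and $X_{N,n}\le c_N$) and a truncation at level $M$ that uses uniform integrability only through $L^1$-boundedness and the uniform tail bound $\sup_N\mathbf{E}\bigl[e^{S_N}\mathbb{1}_{\{S_N>M\}}\bigr]\to 0$. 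Your argument is more elementary (no Prokhorov, no convergence in distribution, no dependence on the first relation) and yields a strictly stronger conclusion, namely $\|e^{S_N}-P_N\|_{L^1}\to 0$ since the difference is nonnegative; the paper's route, by contrast, recycles its already-proved distributional lemma and stays within the subsequence-extraction style used elsewhere in the article. The one point worth making explicit in a final write-up is the identity $\log P_N=\sum_{n\ge1}\log(1+X_{N,n})$ for the infinite product, which holds because all factors are $\ge 1$ and the partial sums of the nonnegative logarithms are bounded by $S_N<\infty$ almost surely; with that noted, every step checks out.
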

	\begin{proof}[Proof of Theorem \ref{theorem:relation between product and sum}]
	Let us first prove that $1) \Rightarrow 2)$. The inequality $x -\frac{x^2}{2}\le \ln(1+x) \le x$ and the assumption imply that:
	$$0 \le S_N- \ln(P_N)\le \frac{1}{2}\sum_{n \ge 1} X_{N,n}^2 \le c_N S_N \xrightarrow[N \rightarrow+\infty]{ (d)} 0.$$
	Hence, by Slutsky's lemma \cite[Lemma 2.8]{vanderVaart1998}, $\ln(P_N) \xrightarrow[N \rightarrow+\infty]{(d)} Y.$\\
	Let us now prove that $2) \Rightarrow 1)$, we see that for all $x>0, 0 \le x - \ln(1+x) \le x \ln(1+x) $. We deduce
	$$0 \le S_N-\ln(P_N)  \le \sum_{n \ge 1} X_{N,n}\ln(1+X_{N,n}) \le c_N \ln(P_N)  \xrightarrow[N \rightarrow+\infty]{ (d)} 0.$$
	Thus, $S_N  \xrightarrow[N \rightarrow+\infty]{ (d)} Y.$
	The equivalence is proved.
\end{proof}
	\begin{proof}[Proof of Theorem \ref{theorem:second relation between products and sums}]
	For the $1) \Rightarrow 2)$ direction:\newline
	The sequence $(\exp(S_n),n \in \mathbb{N})$ being uniformly integrable, thus there is a subsequence $(n_k, k \in \mathbb{N})$ of $\mathbb{N}$ and a random variable $Z \in L^1$ such that:
	$$\exp{S_{n_k}} \xrightarrow[n \rightarrow\infty]{\text{(d)}} Z \qquad \text{ and } \qquad \mathbf{E}\left[\exp(S_{n_k})\right] \xrightarrow[k \rightarrow \infty]{} \mathbf{E}[Z].$$
	We deduce that $\mathbf{E}[Z]=C$ and by Theorem \ref{theorem:relation between product and sum}, we have $P_{n_k} \xrightarrow[n \rightarrow\infty]{\text{(d)}} Z. $\\
	Besides, the uniform integrability of $(\exp(S_n),n \in \mathbb{N})$ implies the uniform integrability of $(P_N, N \in \mathbb{N})$ ( $0 \le P_N \le e^{S_N}$). So,
	$$ \mathbf{E}[P_{n_k}] \xrightarrow[k \rightarrow \infty]{} \mathbf{E}[Z]=C.$$
	Notice that the uniform integrability and the convergence $\mathbf{E}(S_N) \xrightarrow{N \rightarrow \infty} C$ are still valid if we take any subsequence $(S_{m_i}, i \in \mathbb{N})$ of $(S_N, N \in \mathbb{N})$.\\
	Thus, the result so far implies that for every subsequence $(m_i , i \in \mathbb{N})$ of $\mathbb{N}$, there is a subsequence $ (m_{i_k} , k \in \mathbb{N})$ of $(m_i)$ such that:
	$$ \mathbf{E}[P_{m_{i_k}}] \xrightarrow[k \rightarrow \infty]{} C.$$
	The first implication is proved. The reciprocal is similar.
	\end{proof}
\begin{remark} Note that uniform integrability implies tightness.
\end{remark}
\section{Some auxiliary results on random walks}
Let $(S_n,n \in \mathbb{N}_0)$ be a simple symmetric random walks on $\mathbb{Z}$ and :
\begin{itemize}
	\item[i.] $(X_k, k \in \mathbb{N}_0)$ be a sequence of random variables such that $X_0=0$ and $X_k := \inf \{ N > X_{k-1} : S_N=0 \}$  for all positive integer $k$,
	\item[ii.] $\mathcal{F} := ( \mathcal{F}_k , k \in \mathbb{N}_0)$ be the canonical filtration of the process $(X_k, k \in \mathbb{N}_0)$,
	\item[iii.] $T_k := X_{k}-X_{k-1}$ for all positive integer $k$,
	\item[iv.] $\tau_N := \inf\{ k \ge 0: X_k \ge N \}.$
\end{itemize}
Clearly, by definition, for each $N$, $\tau_N$ is a stopping time with respect to the filtration $\mathcal{F}$ and by Markov's property of $S$,  $(T_k, k \in \mathbb{N})$ is a sequence of indepedent identically distributed random variables.\\
Notice that $T_1$ is the first time after $0$ at which the random walk $S$ returns to the position $0$.  Clearly, this stopping time is well-known. One of its properties is that
\begin{lemma}
	 \label{lemma: first exitting time}
There is a positive constant $C$ such that for all $k \in \mathbb{N}$,  $$\mathbf{P}(T_1=2k) = 2^{-2k+1} \frac{1}{k} \binom{2k-2}{k-1}\ge \frac{C}{k^{3/2}}.$$
\end{lemma}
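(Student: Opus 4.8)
The plan is to prove this in two steps: first obtain the exact closed form for $\mathbf{P}(T_1=2k)$ via the classical first-return (renewal) decomposition, then deduce the lower bound from Stirling's formula.

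\emph{Step 1: the exact formula.} Set $u_{2n}:=\mathbf{P}(S_{2n}=0)=2^{-2n}\binom{2n}{n}$ and $f_{2n}:=\mathbf{P}(T_1=2n)$ (recall that $T_1$ is a.s.\ even, so only even values occur). Decomposing any path with $S_{2n}=0$ according to the epoch $2j$ of its first return to the origin gives the renewal identity $u_{2n}=\sum_{j=1}^{n}f_{2j}\,u_{2(n-j)}$ for $n\ge 1$, with $u_0=1$. Introducing the even generating functions $U(s):=\sum_{n\ge 0}u_{2n}s^{2n}$ and $F(s):=\sum_{n\ge 1}f_{2n}s^{2n}$, the identity reads $U=1+FU$. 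Since $U(s)=(1-s^2)^{-1/2}$, we obtain $F(s)=1-(1-s^2)^{1/2}$. Expanding the square root by the generalized binomial theorem and using $\binom{1/2}{k}=\dfrac{(-1)^{k-1}}{2^{2k-1}k}\binom{2k-2}{k-1}$ for $k\ge 1$ gives
$$\mathbf{P}(T_1=2k)=f_{2k}=-(-1)^k\binom{1/2}{k}=2^{-2k+1}\,\frac{1}{k}\binom{2k-2}{k-1},$$
which is the claimed identity. (Alternatively, one may argue combinatorially: by the cycle lemma, the number of nearest-neighbour bridges of length $2k$ staying strictly positive at times $1,\dots,2k-1$ is the Catalan number $\frac1k\binom{2k-2}{k-1}$; doubling for the sign of the first step and dividing by $2^{2k}$ recovers the same value.)

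\emph{Step 2: the lower bound.} For every $k\ge 1$ we have $f_{2k}>0$, and by Stirling's formula (equivalently the asymptotics $\binom{2m}{m}\sim 4^m/\sqrt{\pi m}$),
$$k^{3/2}\,\mathbf{P}(T_1=2k)=k^{3/2}\,2^{-2k+1}\,\frac{1}{k}\binom{2k-2}{k-1}\xrightarrow[k\to\infty]{}\frac{1}{2\sqrt{\pi}}>0.$$
Hence the sequence $\big(k^{3/2}f_{2k}\big)_{k\ge 1}$ is strictly positive and converges to a strictly positive limit, so $C:=\inf_{k\ge 1}k^{3/2}f_{2k}>0$, and therefore $\mathbf{P}(T_1=2k)\ge C\,k^{-3/2}$ for all $k\in\mathbb{N}$, as required.

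\emph{On the difficulty.} There is no substantive obstacle here; the only points that deserve care are the bookkeeping in the renewal identity (keeping the $u_0=1$ term so that $U=1+FU$, not $U=FU$) and the simplification of $\binom{1/2}{k}$ into the stated closed form. Once the exact formula is in hand, the uniform lower bound is immediate from the Stirling asymptotics together with positivity at every finite $k$.
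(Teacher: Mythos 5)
Your proof is correct and follows essentially the same route as the paper, which simply cites the classical first-return formula (Theorem 9.2 in R\'ev\'esz) and applies Stirling's formula; the only difference is that you rederive that classical formula via the renewal identity and generating functions instead of citing it. Both the exact identity and the deduction of the uniform lower bound from positivity plus the Stirling asymptotics $k^{3/2}\mathbf{P}(T_1=2k)\to \frac{1}{2\sqrt{\pi}}$ check out.
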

Indeed, this lemma is just a combination of Theorem 9.2 in \cite{Pal2005} and Sterling's formula.\\
Concerning $\tau_N$, by its definition, we have the following equality which will be useful for our later analysis:
$$\tau_N-1=\sup\{ k \ge 0 : X_k \le N-1 \}= \sum_{n=1}^{N-1} \mathbb{1}_{S_n=0}.$$
In the following is the main theorem of this Section.
\begin{theorem} (Boundedness of exponential moments of local times)
	\label{theorem: boundedness of exponential}
	\newline
	Let $S$ be a random simple walk on $\mathbb{Z}$ starting from $0$, then for any constant $\beta \ge 0$, we have:
	$$ \sup_{N} \mathbf{E}\bigg[ \exp \bigg(  \beta N^{-1/2}\sum_{n=1}^N \mathbb{1}_{S_n=0}\bigg) \bigg] <+\infty.$$
\end{theorem}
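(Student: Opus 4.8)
\textit{Proof strategy.} The plan is to deduce the statement from a \emph{uniform} Gaussian-type tail estimate for $L_N:=\sum_{n=1}^N\mathbb{1}_{S_n=0}$, namely
\[
\mathbf{P}(L_N\ge j)\le e^{-j^2/(4N)}\qquad\text{for all }N\ge1\text{ and all integers }j\ge1 .
\]
Granting this, the conclusion is a routine tail integration: from $e^{\beta L_N/\sqrt N}=1+\sum_{j\ge1}\mathbb{1}_{\{L_N\ge j\}}\bigl(e^{\beta j/\sqrt N}-e^{\beta(j-1)/\sqrt N}\bigr)$ and the elementary inequality $e^{\beta j/\sqrt N}-e^{\beta(j-1)/\sqrt N}=e^{\beta(j-1)/\sqrt N}(e^{\beta/\sqrt N}-1)\le \frac{\beta}{\sqrt N}e^{\beta j/\sqrt N}$, one gets
\[
\mathbf{E}\bigl[e^{\beta L_N/\sqrt N}\bigr]\le 1+\frac{\beta}{\sqrt N}\sum_{j\ge1}e^{\beta j/\sqrt N-j^2/(4N)} ,
\]
and the right-hand sum is a Riemann sum (mesh $1/\sqrt N$) for the finite integral $\int_0^\infty e^{\beta v-v^2/4}\,dv$, so it is bounded by a constant depending on $\beta$ only, not on $N$. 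Taking the supremum over $N$ then finishes the argument.

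It remains to prove the tail bound, and for this I will use the renewal decomposition set up in this appendix. Since $L_N\ge j$ exactly when the $j$-th return of $S$ to $0$ occurs by time $N$, we have $\mathbf{P}(L_N\ge j)=\mathbf{P}(X_j\le N)$ with $X_j=T_1+\dots+T_j$ and $(T_i)$ i.i.d.\ copies of the first return time $T_1$. Applying Markov's inequality to $e^{-sX_j}$ gives, for every $s>0$,
\[
\mathbf{P}(X_j\le N)\le e^{sN}\,\mathbf{E}\bigl[e^{-sT_1}\bigr]^{j} .
\]
The key input is the $\sqrt s$ behaviour of $\phi(s):=\mathbf{E}[e^{-sT_1}]$ as $s\downarrow0$, which reflects the $k^{-3/2}$ tail from Lemma~\ref{lemma: first exitting time}. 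Concretely, using the classical identity $\mathbf{E}[z^{T_1}]=1-\sqrt{1-z^2}$ (or, directly, bounding $1-\phi(s)=\int_0^\infty s\,e^{-su}\,\mathbf{P}(T_1>u)\,du$ from below via $\mathbf{P}(T_1>2n)\asymp n^{-1/2}$), one obtains $\phi(s)=1-\sqrt{1-e^{-2s}}\le 1-\sqrt s\le e^{-\sqrt s}$ for $s\in(0,1/2]$, since $1-e^{-2s}\ge s$ there. Hence $\mathbf{P}(X_j\le N)\le e^{sN-j\sqrt s}$.

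Finally I optimise in $s$: the choice $s=j^2/(4N^2)$ makes $sN-j\sqrt s=-j^2/(4N)$, and it lies in $(0,1/2]$ because $T_i\ge2$ forces $X_j\ge2j$, so $\mathbf{P}(X_j\le N)=0$ unless $j\le N/2$, in which case $s\le 1/16$. This yields the claimed tail bound. I do not expect any serious obstacle; the only points that need care are (i) getting the correct scaling $j^2/N$ in the exponent — which forces one to use the square-root singularity of $\phi$ at $0$ rather than a cruder deterministic or first-moment estimate — and (ii) checking that the minimising $s$ stays in the regime where $\phi(s)\le e^{-\sqrt s}$ is valid.
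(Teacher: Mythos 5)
Your proposal is correct, and it takes a genuinely different route from the paper. The paper also reduces the problem to the renewal structure of the return times $T_1,T_2,\dots$, but it then truncates them ($\min(T_i,N)$), builds the exponential martingale $M^N_n=\exp(-\beta X^N_n+n\lambda_N(\beta))$ with $\lambda_N(\beta)=-\log\mathbf{E}[e^{-\beta\min(T_1,N)}]$, applies optional sampling at the bounded stopping time $\gamma_N$, and finally extracts the $\sqrt{s}$ singularity of the Laplace transform through the asymptotic lower bound $\liminf_N\sqrt{N}\,\lambda_N(\beta/N)\ge c(\beta)$ with $c(\beta)\to\infty$, derived from the lower bound $\mathbf{P}(T_1=2k)\ge Ck^{-3/2}$. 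You instead apply a plain Chernoff/Markov bound to $e^{-sX_j}$, use the closed-form generating function $\mathbf{E}[z^{T_1}]=1-\sqrt{1-z^2}$ to get $\phi(s)\le e^{-\sqrt{s}}$ on $(0,1/2]$, and optimise in $s$ to obtain the explicit sub-Gaussian tail $\mathbf{P}(L_N\ge j)\le e^{-j^2/(4N)}$, from which the uniform exponential moment follows by summation by parts. All the steps check out: the identity $\{L_N\ge j\}=\{X_j\le N\}$, the inequality $1-e^{-2s}\ge s$ on $(0,1/2]$, the verification that the optimal $s=j^2/(4N^2)$ stays in that range because $X_j\ge 2j$, and the telescoping/Riemann-sum bound at the end (the only detail you wave at — uniform control of the Riemann sum of the unimodal function $v\mapsto e^{\beta v-v^2/4}$ — is routine, e.g.\ bound the sum by the integral plus one term of size $e^{\beta^2}$). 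Your argument is more elementary (no martingale, no truncation, no optional sampling) and yields a strictly stronger quantitative conclusion (a Gaussian tail, hence even $\sup_N\mathbf{E}[e^{\epsilon L_N^2/N}]<\infty$ for small $\epsilon$); its only extra input is the exact generating function of $T_1$, which is specific to the simple walk, though as you note the same $\sqrt{s}$ behaviour of $1-\phi(s)$ can be recovered from the tail asymptotics of $T_1$ alone, which is essentially the route the paper's Lemma~\ref{lemma: a small small lemma} takes.
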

This is a corollary of Lemma 4.2 in \cite{Julien2009}. Here, we give an alternative proof.
\begin{proof}

The main idea to prove this theorem is to construct many appropriate martingales to estimate the underlying exponential moment. The construction is as follows, for each $N \in \mathbb{N}$, define:
\begin{itemize}
	\item[i.]$X^N_n:= \sum_{i=1}^n \min(T_i,N).$
	\item[ii.] $\gamma_N  := \inf\{ n \ge 1: X^N_n \ge N \}.$
	\item[iii.] $\lambda_N(\beta ):= -\log \mathbf{E}( e^{ -\beta \min( T_1,N) }) > 0  \quad \forall N \in \mathbb{N}, \beta>0.$
	\item[iii.] $M^N_n:= \exp( - \beta X^N_n + n \lambda_N(\beta)).$
\end{itemize}
Then by noticing that the random variables $T_1,T_2,\dots$ are i.i.d, we see that for each $N$, $(M^N_n, n \in \mathbb{N})$ is a martingale with respect to the filtration $\mathcal{F}$. In addition, because $\forall n,N: X^N_n \ge n$,  $\forall N : \tau_N \le N$. Hence by the optional sampling theorem,  $\forall N \in \mathbb{N} ,\beta>0$,
$$\mathbf{E} \left[\exp( - \beta X^N_{\gamma_N} + \gamma_N \lambda_N(\beta)) \right] =1 .$$
Besides, by definition of $\gamma_N$ and $X^N$, we have:
$$X^{N}_{\gamma_N}= X^{N}_{\gamma_N-1}+\min( T_{\gamma_N},N) \le N+N=2N.$$
Thus, $e^{2\beta} \ge \mathbf{E}( e^{\gamma_N \lambda_N(\beta/N)}) .$\\
Hence, Lemma \ref{lemma: a small small lemma} implies that for all $\beta >0$,
$$\sup_N \mathbf{E} \left[ \exp( \frac{1}{2}c(\beta) \gamma_N/\sqrt{N})\right] <\infty, $$
where $c(\beta):= C \int_{0}^{1/2} \frac{1}{t^{3/2}}(1-e^{-2t\beta})dt$ and $C$ is the constant defined in Lemma  \ref{lemma: first exitting time}.\\
By noticing that $\lim_{\beta \rightarrow \infty} c(\beta)= \infty$ and $\forall N :\tau_N = \gamma_N$, we conclude that for all $\beta>0$:
$$\sup_N \mathbf{E} \left[ \exp( \beta \tau_N/\sqrt{N})\right] <\infty,$$
which is essentially our desired conclusion because $\tau_N -1= \sum_{n=1}^{N-1} \mathbb{1}_{S_n=0}$.
\end{proof}

\begin{lemma}
	\label{lemma: a small small lemma}
The sequence of functions $(\lambda_N, N \in \mathbb{N})$ given in the proof of Theorem \ref{theorem: boundedness of exponential}  sasitifies the following inequality:
$$\liminf_{N \rightarrow \infty} \sqrt{N} \lambda_N(\beta/N) \ge c(\beta),$$
with $c(\beta):= C \int_{0}^{1/2} \frac{1}{t^{3/2}}(1-e^{-2t\beta})dt$, where $C$ is the constant defined in the Lemma \ref{lemma: first exitting time}.
\end{lemma}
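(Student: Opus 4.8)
The plan is to bound $\lambda_N(\beta/N)$ below by a single expectation, re-express that expectation through the law of the first return time $T_1$, insert the pointwise estimate of Lemma~\ref{lemma: first exitting time}, and then recognize the resulting quantity as a Riemann sum converging to the integral defining $c(\beta)$. We may assume $\beta>0$, the case $\beta=0$ being trivial since then both sides vanish.

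First I would apply the elementary inequality $-\log x\ge 1-x$, valid for $x\in(0,1]$, with $x=\mathbf{E}\!\left[e^{-(\beta/N)\min(T_1,N)}\right]$, which gives
\[
\lambda_N(\beta/N)\ \ge\ \mathbf{E}\!\left[1-e^{-(\beta/N)\min(T_1,N)}\right]\ =\ \sum_{k\ge 1}\mathbf{P}(T_1=2k)\,\bigl(1-e^{-(\beta/N)\min(2k,N)}\bigr),
\]
where the last equality uses that $T_1$ is supported on the even integers. Next I would substitute the lower bound $\mathbf{P}(T_1=2k)\ge C k^{-3/2}$ from Lemma~\ref{lemma: first exitting time}, discard the nonnegative terms with $k>N/2$, and use $\min(2k,N)=2k$ for $k\le N/2$, to arrive, after the change of variable $k=tN$ which turns $\sqrt N\,k^{-3/2}$ into $N^{-1}(k/N)^{-3/2}$, at
\[
\sqrt{N}\,\lambda_N(\beta/N)\ \ge\ C\sum_{k=1}^{\lfloor N/2\rfloor}\frac1N\,\phi\!\left(\tfrac kN\right),\qquad \phi(t):=\frac{1-e^{-2\beta t}}{t^{3/2}}.
\]

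It then remains to show that the right-hand side has $\liminf$ at least $C\int_0^{1/2}\phi(t)\,dt=c(\beta)$. The function $\phi$ is positive and integrable on $(0,\tfrac12]$, since $\phi(t)\sim 2\beta\,t^{-1/2}$ as $t\downarrow 0$; moreover $\phi$ is decreasing on $(0,\infty)$, which one checks by writing $\phi(t)=(2\beta)^{3/2}\psi(2\beta t)$ with $\psi(u)=(1-e^{-u})u^{-3/2}$ and verifying that $\psi'(u)=u^{-5/2}\bigl((u+\tfrac32)e^{-u}-\tfrac32\bigr)<0$ for $u>0$. Using this monotonicity, $\frac1N\phi(k/N)\ge\int_{k/N}^{(k+1)/N}\phi(t)\,dt$, so summing over $1\le k\le\lfloor N/2\rfloor$ yields
\[
\sum_{k=1}^{\lfloor N/2\rfloor}\frac1N\,\phi\!\left(\tfrac kN\right)\ \ge\ \int_{1/N}^{(\lfloor N/2\rfloor+1)/N}\phi(t)\,dt\ \xrightarrow[N\to\infty]{}\ \int_0^{1/2}\phi(t)\,dt,
\]
the convergence being monotone convergence for the nonnegative integrable $\phi$. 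Taking $\liminf$ in $N$ finishes the argument.

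The only genuinely delicate point is this last passage to the integral near the singularity at $t=0$: a naive appeal to convergence of Riemann sums is not quite enough, and the clean remedy is precisely the monotonicity comparison above, which simultaneously bounds the sum from below and reduces the limit to a monotone convergence statement. Everything else — the logarithmic inequality, the decomposition over the support of $T_1$, and the insertion of Lemma~\ref{lemma: first exitting time} — is routine.
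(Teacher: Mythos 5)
Your proof is correct and follows essentially the same route as the paper's: the inequality $-\log x\ge 1-x$, the decomposition over the support of $T_1$ with the lower bound from Lemma \ref{lemma: first exitting time}, and the passage from the Riemann sum to $c(\beta)$. The only difference is that you justify the convergence of the sum to the singular integral carefully (via monotonicity of $\phi$ and monotone convergence), a step the paper simply asserts.
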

\begin{proof}
For any $\beta>0$ and $N\ge 2$, we have:
\begin{align*}
	&1-\mathbf{E}\left[ e^{-\beta \min(T_1,N)/N}\right]\\
	&= \sum_{k=1}^{\lfloor N/2 \rfloor} \mathbf{P}( T_1=2k)(1-e^{-2k\beta/N})+\mathbf{P}( T_1 \ge 2\lfloor N/2 \rfloor+2)(1-e^{-\beta})
	\\
	&\ge  \sum_{k=1}^{\lfloor N/2 \rfloor}\frac{C}{k^{3/2}}(1-e^{-2k\beta/N})
\end{align*}
Thus,
$$\liminf_{N \rightarrow \infty} \sqrt{N}\left(	1-\mathbf{E}\left[e^{-\beta \min(T_1,N)/N}\right]\right) \ge C \int_{0}^{1/2} \frac{1}{t^{3/2}}(1-e^{-2t\beta})dt =c(\beta)>0 .$$
From which, we conclude $\liminf_{N \rightarrow \infty} \sqrt{N} \lambda_N(\beta/N) \ge c(\beta).$
\end{proof}

\section{A useful lemma}
\begin{lemma}
	\label{lemma: estimation between two sequences}
	Let $(U_n) , (V_n)$ be two sequences of positive random variables such that $0 \le U_n \le V_n$ for all $n$, and $V_1,V_2,...$ are uniformly integrable. Then if $\frac{V_n}{U_n} \xrightarrow[n \rightarrow+\infty]{(d)}1$ and $\lim_{n\rightarrow \infty} \mathbf{E}(V_n) =C$, then
	$\lim_{n\rightarrow \infty} \mathbf{E}(U_n) =C.$
\end{lemma}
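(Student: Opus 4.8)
The plan is to prove that $\mathbf{E}(V_n-U_n)\to 0$; granting this, $\mathbf{E}(U_n)=\mathbf{E}(V_n)-\mathbf{E}(V_n-U_n)\to C-0=C$, which is the claim. Set $W_n:=V_n-U_n\ge 0$, so that $0\le W_n\le V_n$. First I would recall that convergence in distribution to a constant is equivalent to convergence in probability, so from $V_n/U_n\xrightarrow{(d)}1$ we obtain that $V_n/U_n\to 1$ in probability. Since $U_n>0$ a.s.\ and $U_n\le V_n$, we have the pointwise bound $0\le W_n/V_n\le W_n/U_n=V_n/U_n-1$, and therefore $W_n/V_n\to 0$ in probability.

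Next I would upgrade this to $W_n\to 0$ in probability. The family $(V_n)$ is uniformly integrable, hence bounded in $L^1$ and in particular tight; the product of a tight sequence with a sequence tending to $0$ in probability tends to $0$ in probability, so $W_n=(W_n/V_n)\,V_n\to 0$ in probability. (One may also argue by hand: for $\varepsilon,\delta>0$ one has $\mathbf{P}(W_n\ge\varepsilon)\le\mathbf{P}(W_n/V_n\ge\delta)+\mathbf{P}(V_n\ge\varepsilon/\delta)$, where the first term vanishes as $n\to\infty$ for fixed $\delta$, and the second is at most $\delta\,\sup_m\mathbf{E}(V_m)/\varepsilon$ by Markov's inequality, which can be made arbitrarily small.)

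Finally, since $0\le W_n\le V_n$ and $(V_n)$ is uniformly integrable, the sequence $(W_n)$ is uniformly integrable as well; combined with $W_n\to 0$ in probability, the Vitali convergence theorem yields $\mathbf{E}(W_n)\to 0$, completing the argument. The only genuinely delicate point is this last step: the in-probability convergence $W_n/V_n\to 0$ says nothing by itself about $\mathbf{E}(W_n)$, because $V_n$ may be large on events of small probability; it is precisely the uniform integrability of the dominating family $(V_n)$ that rules this out and legitimizes the passage to expectations. Everything else — the equivalence of convergence in law to a constant with convergence in probability, and the elementary bound $W_n/V_n\le V_n/U_n-1$ — is routine.
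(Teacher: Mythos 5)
Your proof is correct, and it takes a genuinely different route from the paper's. You work directly with the difference $W_n=V_n-U_n$: from the pointwise bound $0\le W_n/V_n\le V_n/U_n-1$ and the fact that convergence in law to a constant is convergence in probability, you get $W_n/V_n\to 0$ in probability; the tightness of $(V_n)$ (which follows from $L^1$-boundedness via Markov) upgrades this to $W_n\to 0$ in probability; and the domination $0\le W_n\le V_n$ transfers uniform integrability to $(W_n)$, so Vitali gives $\mathbf{E}(W_n)\to 0$ and hence $\mathbf{E}(U_n)\to C$. The paper instead argues by compactness: uniform integrability of $(U_n)$ gives, via Prokhorov and the subsequence principle, a further subsequence along which $U_{n_{k_l}}$ converges in distribution to some $Z$; Slutsky then forces $V_{n_{k_l}}\xrightarrow{(d)}Z$ as well, and uniform integrability identifies $\mathbf{E}(Z)=\lim\mathbf{E}(V_{n_{k_l}})=C=\lim\mathbf{E}(U_{n_{k_l}})$. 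Your argument is more elementary and self-contained — it never needs to extract or identify a limiting random variable, and it quantifies where the mass could escape (your by-hand estimate $\mathbf{P}(W_n\ge\varepsilon)\le\mathbf{P}(W_n/V_n\ge\delta)+\delta\sup_m\mathbf{E}(V_m)/\varepsilon$ makes this explicit). The paper's subsequence argument is shorter to state given that Prokhorov's theorem is already invoked elsewhere in the article, but both proofs rest on the same essential pillar, namely that uniform integrability of the dominating sequence is what licenses the passage from convergence in probability to convergence of expectations. One small point worth making explicit in your write-up: the hypothesis that $V_n/U_n$ is a well-defined random variable implicitly requires $U_n>0$ a.s., which you correctly use and which holds in the paper's applications.
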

\begin{proof} The uniform integrability of $(V_n)$ implies the uniform integrability of $(U_n)$. The uniform integrability of $(U_n)$ implies that for every subsequence $(n_k)$ of $\mathbb{N}$, there exists a subsequence $(n_{k_l})$ of $(n_k)$ such that $(U_{n_{k_l}} , l \in \mathbb{N})$ converges in distribution to a random variable $Z$. The convergence of $( \frac{V_n}{U_n}, n \in \mathbb{N})$ implies that $(V_{n_{k_l}} , l \in \mathbb{N})$ also converges in distribution to $Z$. Then, the uniform integrability implies that $\lim_l \mathbf{E}(U_{n_{k_l}}) =C = \lim_l \mathbf{E}(V_{n_{k_l}})$. Hence the conclusion.
\end{proof}

\end{appendices}

\bibliographystyle{plain} 
\bibliography{biblio}
\end{document}